\newtheorem{theorem}{Theorem}[section]
\newtheorem{corollary}[theorem]{Corollary}
\newtheorem{proposition}[theorem]{Proposition}
\newtheorem{lemma}[theorem]{Lemma}
\newtheorem{conjecture}[theorem]{Conjecture}
\newtheorem{remark}[theorem]{Remark}
\theoremstyle{definition}
\numberwithin{equation}{section}
\numberwithin{figure}{section}
\title{Jerison-Lee identities and Semi-linear subelliptic equations on CR manifolds}
\author{Xi-Nan Ma}
\address{School of Mathematical Sciences, University of Science and Technology of China, Hefei, 230026, Anhui Province, China}
\email{xinan@ustc.edu.cn}
\author{Qianzhong Ou}
\address{School of Mathematics and Statistics, Guangxi Normal University, Guilin, 541004, Guangxi Province, China}
\email{ouqzh@gxnu.edu.cn}
\author{Tian Wu}
\address{School of Mathematical Sciences, University of Science and Technology of China, Hefei, 230026, Anhui Province, China}
\email{wt1997@mail.ustc.edu.cn}
\thanks{The authors were supported by the National Natural Science Foundation of China (grants 12141105) and the first author also was supported by the National Key Research and Development Project (grants SQ2020YFA070080).}
\subjclass[2020]{Primary 32V20; Secondary 35J61}
\keywords{Cauchy-Riemann Yamabe problem, subelliptic equations, Folland-Stein inequality}
\begin{document}
\maketitle
\begin{abstract}
  In the study of the extremal for Sobolev inequality on the Heisenberg group and the Cauchy-Riemann(CR) Yamabe problem, Jerison-Lee found a three-dimensional family of differential identities for critical exponent subelliptic equation on Heisenberg group $\mathbb H^n$ by using the computer in \cite{MR0924699}. They wanted to know whether there is a theoretical framework that would predict the existence and the structure of such formulae. With the help of dimensional conservation and invariant tensors, we can answer the above question. For a class of subcritical exponent subelliptic equations on the CR manifold, several new types of differential identities are found. Then we use those identities to get the rigidity result, where rigidity means that subelliptic equations have no other solution than some constant at least when parameters are in a certain range. The rigidity result also deduces the sharp Folland-Stein inequality on closed CR manifolds.
\end{abstract}
\tableofcontents
\section{Introduction}

 Let $M$ be a real manifold. A distinguished complex subbundle $T^{(1,0)}M$ of $\mathbb CTM$ is a CR structure, if $T^{(1,0)}\cap T^{(0,1)}=0$, where $T^{(0,1)}:=\overline {T^{(1,0)}}$, and $M$ is called a CR manifold. An CR manifold $M$ is  hypersurface type, if $\dim_{\mathbb R}M=2n+1$ and $\dim_{\mathbb C}T^{(1,0)}M=n$.

If $M$ is oriented, a globally defined real one-form $\theta$ that annihilates $T^{(1,0)}M$ and $T^{(0,1)}M$ exists. The Levi form $\langle V,W\rangle_{L_\theta}=L_\theta(V,\overline W):=-2\sqrt{-1}\mathrm d\theta(V\wedge\overline W)$ is a hermitian form on $T^{(1,0)}M$. We say the CR structure is strictly pseudoconvex if $L_\theta$ is positive definite on $T^{(1,0)}M$ for some choice of $\theta$, in which case $\theta$ defines a contact structure on $M$ and we call $\theta$ a contact form associated with the CR structure.

The Reeb vector field $T$ is defined by $\theta(T)=1$, and $\mathrm d\theta(T,X)=0$, $\forall X\in TM$, then $TM=T^{(1,0)}M\oplus T^{(0,1)}M\oplus\operatorname{span}\{T\}$. Let $\{Z_i\}_{i=1}^n$ be an orthogonal basis w.r.t. the Levi form, and $\{\theta^i\}_{i=1}^n$ be the dual base of $\{Z_i\}_{i=1}^n$, then $\theta^i(T)=0$. Set $\mathrm d\theta=2\sqrt{-1}h_{i\overline j}\theta^i\wedge\theta^{\overline j}$, and we'll use the hermitian matrix $h_{i\overline j}$ and its inverse $h^{i\overline j}$ to raise and lower indices. Let $R_{i\overline jk\overline l}$ be the Webster curvature tensor , $\operatorname{Tor}(Z_i,Z_j)=A_{ij}$ the Webster torsion tensor, $\operatorname{Ric}(Z_i,Z_j)=R_{i\overline j}=R_{i\ k\overline j}^{\ k}$ the pseudohermitian Ricci tensor, and $R=R_i^{\ i}$ the pseudohermitian scalar curvature.

In this article, all small English letters in lower or upper place will be considered as summation indices taking part in the process of summing from 1 to $n$. Besides, all Greek letters and Arabic numbers in the lower or upper place won't participate in the summation process. Denote $Z_i f$ as $f_{,i}$, $Z_{\overline i} f$ as $f_{,\overline i}$, $Tf$ as $f_{,0}$. Commutation formulae are presented as follows:

$$f_{,ij}=f_{,ji},\ f_{,i\overline{j}}-f_{,\overline{j}i}=2\sqrt{-1}h_{i\overline{j}}f_{,0},\ f_{,0i}-f_{,i0}=A_{ij}f_,^{\ j},\ f_{,ij\overline k}-f_{,i\overline kj}=2\sqrt{-1}h_{j\overline k}f_{,i0}+R_{i\ j\overline k}^{\ l}f_{,l}.$$

Define $\Delta f:=\displaystyle\frac 1 2(f_{,i}^{\ \ i}+f_{,\ \ i}^{\ i})$ as the sub-Laplacian operator on $M$, then $\Delta f=\operatorname{Re}f_{,i}^{\ \ i}$, $f_{,i}^{\ \ i}=\Delta f+n\sqrt{-1}f_{,0}$. Denote $f_{,i}f_,^{\ i}$ as $|\nabla f|^2$.

In this article, $M$ is a closed, oriented, strictly pseudoconvex CR manifold of hypersurface type, and curvature and torsion satisfy the following pointwise condition:
\begin{equation}\label{condition}
  \operatorname{Ric}(Z,Z)\geqslant (n+1)\langle Z,Z\rangle_{L_\theta},~~\operatorname{Tor}(Z,Z)=0,~~\forall Z\in T^{(1,0)}M.
\end{equation}

Let $\alpha>1$, $\lambda>0$, $u\in C^\infty(M)$ is positive, we study the following equation:

\begin{equation}\label{equ}
  \Delta u-\lambda u+u^\alpha=0.
\end{equation}

',' would be omitted while writing derivatives of solution $u$ in this article.

Positive solutions don't exist if $\lambda\leqslant0$ by directly integrated by part, hence assume that $\lambda>0$. The existence and regularity of solutions are discussed carefully in \cite{MR0880182}, then only the classification of smooth solutions is considered in this article.

In celebrated paper \cite{MR0924699}, Jerison-Lee introduced remarkable identities to deduce the following theorem.

\begin{theorem}[\cite{MR0924699} Theorem A]\label{JL1}
    Assume that $u>0$ satisfying \eqref{equ} with $\alpha=\displaystyle\frac{n+2}{n}$, $\lambda=\displaystyle\frac{n^2}{4}$, and $(M^{2n+1},\theta)=(\mathbb S^{2n+1},\theta_c)$, then there exists $s\geqslant0$, $\xi\in\mathbb S^{2n+1}$ such that
    $$u(z)=c_{n,s}|\cosh s+(\sinh s)\langle z,\xi\rangle|^{-n},~z\in\mathbb S^{2n+1}.$$
\end{theorem}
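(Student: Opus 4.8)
The plan is to follow the method of Jerison and Lee \cite{MR0924699}, which the identities developed in this paper are meant to place in a general framework: replace $u$ by a conformal power, attach to the new unknown a tensor that vanishes precisely on the model solutions, produce a divergence identity whose integral is a sum of nonnegative terms, conclude that the tensor vanishes, and integrate the resulting overdetermined system.

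First I would record the special geometry of $(\mathbb S^{2n+1},\theta_c)$: the Webster torsion vanishes, $A_{ij}\equiv 0$; the pseudohermitian Ricci tensor is $R_{i\bar j}=(n+1)h_{i\bar j}$, so $(\ref{condition})$ holds with equality; and the full Webster tensor is the constant one $R_{i\bar jk\bar l}=h_{i\bar j}h_{k\bar l}+h_{i\bar l}h_{k\bar j}$. Since $\alpha=\frac{n+2}{n}$ is the Folland--Stein critical exponent, $(\ref{equ})$ with $\lambda=\frac{n^2}{4}$ is, up to normalization, the CR Yamabe equation on the sphere, hence covariant under the CR automorphism group $\mathrm{PU}(n+1,1)$; composing the constant solution $u\equiv(n/2)^n$ with these automorphisms produces exactly the family $c_{n,s}|\cosh s+(\sinh s)\langle z,\xi\rangle|^{-n}$ ($s=0$ giving the constant), the value of $c_{n,s}$ being fixed by substitution into $(\ref{equ})$. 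So what has to be shown is the converse: every positive solution lies in this orbit.

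Next I would set $w:=u^{-2/n}$, which transforms $(\ref{equ})$ into
$$\Delta w-\frac{n+2}{2}\,\frac{|\nabla w|^2}{w}+\frac n2\,w-\frac 2n=0,$$
and I would check, using the commutation formulae and $A_{ij}=0$, that the model solutions are characterized by the overdetermined system
$$w_{,jk}=0,\qquad w_{,j\bar k}=\frac1n\Big(w_{,l}{}^{,l}-\frac{|\nabla w|^2}{w}\Big)h_{j\bar k}+\frac{w_{,j}w_{,\bar k}}{w},$$
that is, by the vanishing of $E_{jk}:=w_{,jk}$ and of $\widetilde E_{j\bar k}:=w_{,j\bar k}-\frac1n\big(w_{,l}{}^{,l}-w^{-1}|\nabla w|^2\big)h_{j\bar k}-w^{-1}w_{,j}w_{,\bar k}$. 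The heart of the proof is to find scalar multipliers $\phi(w),\psi(w)$ and a vector field $V$ assembled from $E$, $\widetilde E$, $\nabla w$ and the multipliers for which, after repeated integration by parts over $M$ and systematic use of the commutation formulae, the equation for $w$, $A_{ij}=0$ and the sphere's curvature,
$$0=\int_M\operatorname{div}V\;\theta\wedge(\mathrm d\theta)^n=\int_M\Big[\,a(w)\,|E_{jk}|^2+b(w)\,|\widetilde E_{j\bar k}|^2+(\text{further nonnegative terms})\,\Big]\,\theta\wedge(\mathrm d\theta)^n$$
with $a,b\geqslant 0$. This is the Jerison--Lee identity, and locating the right power $-2/n$, the right multipliers, and verifying that the many curvature and commutator error terms reassemble into these squares is, I expect, the main obstacle of the argument; it is precisely the point the dimensional-conservation and invariant-tensor machinery of the present paper is designed to systematize.

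Granting the identity, $E_{jk}\equiv 0$ and $\widetilde E_{j\bar k}\equiv 0$, and it remains to integrate this overdetermined system. Since $E_{jk}=0$ together with the equation for $w$ confines $w$ to an explicit finite-dimensional space of functions on $\mathbb S^{2n+1}$, imposing $\widetilde E_{j\bar k}=0$ and positivity then selects precisely the multiples of $|\cosh s+(\sinh s)\langle z,\xi\rangle|^2$; equivalently, one transfers to the Heisenberg group $\mathbb H^n$ by the Cayley transform, where the system reduces $w$ to a constant multiple of $(1+|z|^2)^2+t^2$ up to Heisenberg translations and dilations. Returning through $u=w^{-n/2}$ gives $u(z)=c_{n,s}|\cosh s+(\sinh s)\langle z,\xi\rangle|^{-n}$, and after using the ambiguity $(s,\xi)\leftrightarrow(-s,-\xi)$ we may take $s\geqslant 0$; smoothness and positivity of $u$ hold automatically since $\cosh s>|\sinh s|$. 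The weight of the argument sits in the identity; this last step is classical once the explicit description of the CR automorphisms of $\mathbb S^{2n+1}$ is invoked.
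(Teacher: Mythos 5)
First, a point of comparison: this paper does not actually prove Theorem \ref{JL1}; it is quoted as Theorem A of Jerison--Lee \cite{MR0924699}. The closest the paper comes is the critical-case computation in Case 2 of Section 3, where the choice $d_k=e_k=1$, $d_4=-2$, $e_4=2$, $\mu=3$, $\beta=-\frac 2n$ turns $(\ref{case2})$ into $(\ref{JL})$ (the Jerison--Lee identity with the curvature term $\mathscr R$), and the actual classification of solutions on the sphere is delegated to \cite{MR0924699} and \cite{MR3342188}. Your outline follows exactly that strategy -- conformal substitution $w=u^{-2/n}$, a divergence identity whose integral is a sum of squares, vanishing of the resulting tensors, then integration of the overdetermined system -- and your preliminary computations are sound: the transformed equation for $w$ is correct, your $E_{jk}$, $\widetilde E_{j\bar k}$ are precisely the paper's $D_{ij}$, $E_{i\bar j}$ rewritten in $w$, and on $(\mathbb S^{2n+1},\theta_c)$ with $\alpha=\frac{n+2}{n}$, $\lambda=\frac{n^2}{4}$ one has $\mathscr R\equiv 0$ and, via $(\ref{Dij})$, the vanishing of $G_i$ follows from $D_{ij}=E_{i\bar j}=0$, so omitting the third tensor from your system is harmless.

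The gaps are in the two load-bearing steps, both of which you defer rather than carry out. (i) The divergence identity itself is only postulated: you describe its expected shape and explicitly call ``locating the right multipliers'' the main obstacle. That identity is the hard content of the theorem (it is (4.2) of \cite{MR0924699}, recovered here as $(\ref{JL})$ and, in family form, as $(\ref{total})$), and on the sphere one must also verify that the curvature and torsion error terms assemble into $\mathscr R\geqslant 0$ under $(\ref{condition})$; nothing in your write-up produces it. (ii) The integration of the overdetermined system is asserted, not argued: the claim that $w_{,jk}=0$ together with the equation ``confines $w$ to an explicit finite-dimensional space'' and that imposing $\widetilde E_{j\bar k}=0$ then ``selects precisely'' the functions $|\cosh s+(\sinh s)\langle z,\xi\rangle|^2$ is exactly the step that occupies the final sections of \cite{MR0924699} (there carried out on $\mathbb H^n$ after the Cayley transform, where one really solves the system $u_{ij}=\frac{n+2}{n}u_iu_j/u$, etc., explicitly); no argument is given here. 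So the proposal is a correct plan along the same route as the original proof and as this paper's framework, but as a proof it is incomplete at precisely the two points where the work lies.
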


For the flat CR manifold Heisenberg group $\mathbb H^n$ case, Jerison-Lee studied critical exponent $\alpha=\displaystyle\frac{n+2}{n}$ in \cite{MR0924699}, where they found three-dimensional family identities. The positive solution of the CR-Yamabe equation can be classified with the assumption of finite energy by using those identities:

\begin{theorem}[\cite{MR0924699} Corollary C]\label{JL1'}
  Assume that $u\in L^{\frac{2n+2}{n}}(\mathbb H^n)$ is the positive solution of $\Delta u+u^{\frac{n+2}{n}}=0$ in $\mathbb H^n$, then there exists $\lambda\in\mathbb C$ and $\mu\in\mathbb C^n$ satisfying $\operatorname{Im}\lambda>\displaystyle\frac{|\mu|^2}{4}$, such that
  $$u(z,t)=c_{n,\lambda,\mu}\left|t+\sqrt{-1}|z|^2+z\cdot\mu+\lambda\right|^{-n}.$$
\end{theorem}

Theorem \ref{JL1} is covered by Theorem \ref{JL1'} by Cayley transformation. The proofs of the above theorems were based on the same idea as Obata's \cite{MR0303464} proof 
of the analogous result in Riemannian geometry: the only Riemannian metrics 
on the sphere that are conformal to the standard one and have constant scalar 
curvature are obtained from the standard metric by a conformal diffeomorphism 
of the sphere. Since the 
pseudohermitian Bianchi identities involve extra torsion terms and on the Heisenberg group, this reflects the nontrivial commutation relations. Using computer algebra, Jerison-Lee found a three-dimensional family ((4.2)$\sim$(4.4) in \cite{MR0924699}) of solutions with divergence terms on the left-hand side and positive terms on the right-hand side. Then, the divergence theorem would prove that the right-hand side vanishes identically and gets above classification results. In page 4 of \cite{MR0924699}, Jerison-Lee raised the following problem:
\begin{quote}
  \emph{\indent An interesting (but vaguely defined) problem raised by this work is to find an "explanation" for the existence of divergence formulas such as (4.2) and (3.1). Is there a theoretical framework that would predict the existence and the structure of such formulas, so that they could be discovered more systematically?}
\end{quote}

With the help of dimensional conservation and invariant tensors, we state the following theorem, which answers the problem above.
 
\begin{theorem}\label{JL2}
  Assume that $u$ is the positive solution of $\Delta u+u^{\frac{n+2}{n}}=0$ in $\mathbb H^n$, then all the useful identities of $\{(0,0),2,6,+\}$ type must lie in the three-dimensional family.
\end{theorem}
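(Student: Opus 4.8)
The plan is to convert the search for identities of $\{(0,0),2,6,+\}$ type into a finite-dimensional problem in linear algebra and the invariant theory of $U(n)$, and then to read off that the useful ones span no more than the three parameters of the Jerison--Lee family.

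\textbf{Step 1 (the candidate space is finite dimensional).} Since $\mathbb{H}^n$ is flat --- no Webster curvature and no torsion --- every ingredient of an identity of this type is a monomial in $u$, a negative power of $u$, and covariant derivatives of $u$ of the order prescribed by the label $2$, with the total homogeneity weight prescribed by the value $6$. Dimensional conservation --- invariance under the parabolic dilations $(z,t)\mapsto(\delta z,\delta^{2}t)$, under the action of $U(n)$ on unitary frames, and under the critical rescaling $u\mapsto\delta^{n}u(\delta z,\delta^{2}t)$ forced by the exponent $\frac{n+2}{n}$ --- admits only finitely many candidate vector fields $V_{1},\dots,V_{N}$, so that a general divergence of this type is $\sum_{\mu=1}^{N}c_{\mu}\operatorname{div}V_{\mu}$ with $c=(c_{1},\dots,c_{N})\in\mathbb{R}^{N}$.

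\textbf{Step 2 (reduction modulo the equation).} For each $c$ one expands $\sum_{\mu}c_{\mu}\operatorname{div}V_{\mu}$ by the commutation formulae and then substitutes $u_{i}{}^{i}=-u^{\frac{n+2}{n}}+n\sqrt{-1}\,u_{0}$ together with its first covariant derivatives, eliminating the sub-Laplacian, its gradient, and hence all trace parts of the third derivatives of $u$. The residual third-order contributions must cancel, which imposes the first batch of linear constraints on $c$, and one is left with an identity
\[
\sum_{\mu=1}^{N}c_{\mu}\operatorname{div}V_{\mu}\;=\;Q_{c}\;+\;\mathcal{E}_{c},
\]
where $\mathcal{E}_{c}$ is a manifest multiple of the equation $\Delta u+u^{\frac{n+2}{n}}=0$ and its first derivatives --- hence vanishes on solutions --- while $Q_{c}$ is, pointwise, a quadratic form in the second-order covariant derivatives of $u$ (the symmetric tensor $u_{ij}$, the primitive part of $u_{i\bar j}$, the mixed derivative $u_{i0}$) together with the gradient $u_{i}$, whose coefficient matrix, after factoring out the homogeneous power of $u$ dictated by scaling, is linear in $c$. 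The content of the labels $(0,0)$ and $+$ is precisely that $Q_{c}$ carries no residual lower-order terms and that one demands $Q_{c}\ge 0$.

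\textbf{Step 3 (positivity and the count).} The group $U(n)$ acts on the finite-dimensional space of these second-order jet coordinates and $Q_{c}$ is $U(n)$-invariant; decomposing that space into irreducible components --- the symmetric $u_{ij}$ piece, the primitive $u_{i\bar j}$ piece, the vector piece $u_{i0}$, the gradient piece $u_{i}$, and the pairings between them --- block-diagonalizes $Q_{c}$ into a short list of blocks whose entries are explicit linear forms in $c$. Imposing $Q_{c}\succeq 0$ is then a finite system of semidefiniteness conditions which, together with the constraints from Step 2 and after discarding the degenerate directions with $Q_{c}\equiv 0$ (the pure divergences and the pure multiples of the equation, i.e. the \emph{non-useful} identities), defines a cone $\mathcal{C}\subset\mathbb{R}^{N}$. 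It remains to exhibit three explicit members of $\mathcal{C}$ --- the generators of the Jerison--Lee family of \cite{MR0924699} --- and to verify, block by block, that every point of $\mathcal{C}$ is a linear combination of them; this forces $\mathcal{C}$ to be exactly the announced three-dimensional family, and in particular places every useful identity inside it.

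The main obstacle is Step 3, and inside it the off-diagonal couplings of $Q_{c}$. Diagonal positivity in each $U(n)$-block is routine, but the cross terms --- for instance between the $u_{ij}$-block and the $u_{i0}$-block, which are tied together by the commutation relation $u_{0i}-u_{i0}=A_{ij}u^{j}$ (here with $A\equiv 0$) and by the equation --- produce the sharp inequalities that collapse what could a priori be a higher-dimensional cone down to the three-dimensional Jerison--Lee family. This is exactly the point at which Jerison and Lee resorted to a computer, and where the invariant-tensor and dimensional-conservation bookkeeping set up in the earlier sections is meant to replace that computation by a structural argument.
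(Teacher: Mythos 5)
There is a genuine gap, and it sits exactly where the theorem lives. Your Step 3 asserts that, after imposing $Q_c\succeq 0$, one can ``verify, block by block, that every point of $\mathcal C$ is a linear combination'' of three exhibited generators, but you give no argument for why the admissible set is three-dimensional, and you yourself concede this step is the main obstacle. Positive semidefiniteness is not the mechanism that produces the collapse. In the paper the reduction is linear, not spectral: one assembles the general $\{(0,0),2,6,+\}$ ansatz from the three families of vector fields (\ref{2'}), (\ref{1'}), (\ref{3'}) (parameters $d_1,d_2,d_4,e_1,e_2,e_4,\mu,\beta$ together with the combination coefficients $a,b$), and then demands that \emph{all} cross terms coupling an invariant tensor to a non-invariant scalar, such as $\frac{|\nabla u|^2}{u^2}\operatorname{Re}D_iu_{\overline i}$, $u^{2/n}E_iu_{\overline i}$, $\operatorname{Re}\frac{n\sqrt{-1}u_0}{u}G_iu_{\overline i}$, vanish identically. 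The justification is structural: a surviving cross term would, after completing the square, force a non-invariant tensor such as $D_{ij}+c\,u_iu_j/u$ with $c\neq 0$ to vanish on solutions, and since its divergence involves non-invariant quantities this forces $u$ to be constant, contradicting the known nontrivial critical solutions on $\mathbb H^n$. This yields the eight linear equations $\widetilde\Delta_l=\widetilde\Theta_l=\widetilde\Xi_l=0$, and an explicit elimination with a case analysis on the weight $\beta$ (the cases $\beta=0$ and $\beta\neq 0,-\frac 2n$ force the trivial identity; only $\beta=-\frac 2n$ survives) leaves exactly the three free parameters $d_1,a,\mu$ of Proposition \ref{Jerison-LeeB}. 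Nothing in your sketch replaces this elimination; as written, the dimension count is assumed rather than proved.

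A secondary confusion compounds this: you define ``useful'' as $Q_c\succeq 0$ with $Q_c\not\equiv 0$ and then aim to show the resulting cone is generated by three positive members. But the three-dimensional family of the theorem is a \emph{linear} family of identities, inside which the positive ones form a proper sub-region cut out by (\ref{con}) and the semidefiniteness of the matrix (\ref{matrix}); for instance the member with $d_1=1$, $a=0$, $\mu=3n$ (Jerison--Lee's (4.4)) lies in the family but is not positive. So proving that a positivity cone is spanned by three exhibited elements is neither the right statement nor, for a general cone in three dimensions, something one could expect to verify; the dimension must be pinned down first by the vanishing of the cross terms, with positivity imposed afterwards inside the three-parameter family. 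Finally, your Step 1 finiteness claim needs the paper's bookkeeping to be usable: the admissible vector fields are not only those built from the invariant tensors $D_{ij}$, $E_{i\overline j}$, $G_i$ (as in (\ref{2'}) and (\ref{1'})) but also fields like the one in (\ref{3'}), built from non-invariant monomials whose divergence nevertheless reduces to invariant combinations; omitting these would make the candidate family too small, while working with arbitrary jet monomials without this reduction leaves the subsequent elimination unmanageable.
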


The meaning of dimensional symbol $\{(0,0),2,6,+\}$ is given in Section 2. In \cite{MR3342188}, Wang extended Theorem \ref{JL1} from $(\mathbb S^{2n+1},\theta_c)$ to closed Einstein pseudohermitian manifold, and to closed pseudohermitian manifold under condition \eqref{condition} in \cite{MR4418321} where he raised the following conjecture.

\begin{conjecture}[\cite{MR4418321} Conjecture 1]\label{conjecture}
    If $1<\alpha\leqslant\displaystyle\frac{n+2}{n}$ and $0<\lambda\leqslant\displaystyle\frac{n}{2(\alpha-1)}$, the only positive solution of \eqref{equ} under the condition \eqref{condition} must be $u\equiv\displaystyle\lambda^{\frac{1}{\alpha-1}}$, otherwise $\alpha=\displaystyle\frac{n+2}{n}$, $\lambda=\displaystyle\frac{n^2}{4}$, $(M^{2n+1},\theta)=(\mathbb S^{2n+1},\theta_c)$ is standard CR sphere, with some $s\geqslant0$, $\xi\in\mathbb S^{2n+1}$ such that
    $$u(z)=c_{n,s}|\cosh s+(\sinh s)\langle z,\xi\rangle|^{-n},~z\in\mathbb S^{2n+1}.$$

\end{conjecture}

\begin{remark}
  In \cite{MR4418321}, Wang proved the above conjecture when $\alpha=\displaystyle\frac{n+2}{n}$ by using the Jerison-Lee identity (3.1) in \cite{MR0924699}.
\end{remark}

By dimensional conservation and invariant tensors, several new differential identities are obtained for the subcritical exponent case, which leads to our following main theorem. This theorem gives the positive answer to the subcritical exponent case of Conjecture \ref{conjecture}.

\begin{theorem}\label{CR}
    If $1<\alpha<\displaystyle\frac{n+2}{n}$ and $0<\lambda\leqslant\displaystyle\frac{n}{2(\alpha-1)}$, the only positive solution of \eqref{equ} under the condition \eqref{condition} must be $u\equiv\displaystyle\lambda^{\frac{1}{\alpha-1}}$.
\end{theorem}

 We let $u\in HW^{1,2}(M)$, which is the usual Sobolev space on $M$ (see \cite{MR0367477} or \cite{MR2214654} for details). In \cite{MR4418321}, the following Corollary \ref{Folland} was raised as the consequence of Theorem \ref{CR}.

\begin{corollary}\label{Folland}
    Let $(M^{2n+1},\theta)$ be a closed, oriented, strictly-pseudoconvex CR manifold of hypersurface type satisfying \eqref{condition}, then for $2<q<\displaystyle\frac{2Q}{Q-2}=\frac{2n+2}{n}$, $u\in HW^{1,2}(M)$, 
    $$\frac{4(q-2)}{Q-2}\int_M|\nabla u|^2+\int_M|u|^2\geqslant\operatorname{vol}(M)^{1-\frac 2 q}\left(\int_M|u|^q\right)^{\frac 2 q}.$$
    Equality holds if and only if $u$ is constant. Notice that $Q=2n+2$, and $q$ is equivalent to $\alpha+1$ in Theorem \ref{CR}.
\end{corollary}

The proof of Corollary \ref{Folland} is similar to the Riemannian case as corollary 6.2 in \cite{MR1134481}. It's noteworthy that Frank-Lieb got this inequality on standard CR sphere $(\mathbb S^{2n+1},\theta_c)$ by harmonic polynomials extension in \cite{MR2925386} (see corollary 2.3), and in fact, Frank-Lieb derived the sharp constants for Hardy-Littlewood-Sobolev inequalities on Heisenberg group. Besides, Moser-Trudinger and Beckner-Onofri's inequalities on the CR sphere are obtained by Branson-Fontana-Morpurgo in \cite{MR2999037}.

\begin{remark}
  Motivated by the Jerison-Lee identity (4.2) in \cite{MR0924699}, Ma-Ou \cite{MR4530625} proved that there is no positive solution of $\Delta u+u^{\alpha}=0$ in $\mathbb H^n$ while $1<\alpha<\displaystyle\frac{n+2}{n}$. Recently, Catino- Li-Monticelli-Roncoron  \cite{1} and Flynn-V\'{e}tois in \cite{2} got more generalizations for the critical exponent case.
\end{remark}

The result in Ma-Ou \cite{MR4530625} can be proved by the identity \eqref{case2} raised in this article with curvature terms discarded.

For semilinear elliptic equations on compact Riemannian manifolds, M-F. B. V\'eron and L. V\'eron [\cite{MR1134481}, Theorem 6.1]  introduced the Bochner-Lichnerowicz-Weitzenbeck formula in such a way
that they could extend and simplify  Gidas-Spruck's  \cite{MR0615628} results:

\begin{theorem}[\cite{MR1134481} Theorem 6.1]
  Assume that $(M^n,g)$ is a closed Riemannian manifold of dimension $n\geqslant 2$, $\alpha>1$, $\lambda>0$, and $u$ is a positive solution of 
  $$\Delta u-\lambda u+u^\alpha=0.$$ 
  Assume also that the spectrum $\sigma(R(x))$ of the Ricci tensor $R$ of the metric $g$ satisfies
  $$\inf\limits_{x\in M}\min \sigma(R(x)) \geqslant \frac{n-1}{n}(\alpha-1)\lambda,~~\alpha\leqslant\frac{n+2}{n-2}.$$
  Moreover, assume that one of the two inequalities is strict if $(M^n,g)=(\mathbb{S}^n,g_c)$ is the standard sphere, then $u\equiv{\lambda}^\frac{1}{\alpha-1}$.
\end{theorem}

The direct extension of Jerison-Lee identity fails in the process of solving the subcritical exponent case in CR geometry, because of the concurrence of curvature, torsion, and the second layer of the CR manifold as a Carnot group. Hence, dimensional conservation and invariant tensors are introduced in Section 2. Then, we get some new differential identities in Section 3 to prove Theorem \ref{CR}. Besides, the new method can explain the existence of such a three-dimensional family in the critical exponent case for the Heisenberg group. We discuss it in detail in Section 4 and prove Theorem \ref{JL2}, which answers the question of the theoretical framework for finding differential identities raised by Jerison-Lee \cite{MR0924699}.
\section{Preparations: dimensional conservation and invariant tensors}

In this section, \textbf{dimensional conservation} and \textbf{invariant tensors} are introduced for preparing useful differential identities. Target identities are composed of divergence of some vector fields and summation of positive terms which contain the complete square of some tensors, then all tensors in complete square terms are zero by divergence theorem. Thus, how to find those tensors priorly is essential.

We say a tensor $S(u)$ is of $\{(r,s),x,y,+/-\}$ type, if it's linearly composed of some $T^{(r,s)}$ tensors with $x$-degree $u$, $y$-order derivatives, and the number of $\sqrt{-1}$ plus the number of vector field $T$ is even/odd for every tensors. For example:
$$\{(2,0),1,2,+\}:~D_{ij}=u_{ij}+c_1\displaystyle\frac{u_iu_j}{u};$$
$$\{(1,1),1,2,+\}:~E_{i\overline j}=u_{i\overline j}+c_2\frac{u_iu_{\overline j}}{u}+c_3\Delta uh_{i\overline j}+c_4n\sqrt{-1}u_0h_{i\overline j}+c_5\frac{|\nabla u|^2}{u}h_{i\overline j}+c_6\lambda uh_{i\overline j},$$
where $\{c_l\}_{l=1}^6$ are constants. It's noteworthy that $u_0$ is of $\{(0,0),1,2,-\}$ type, and $\lambda$ is of $\{(0,0),0,2,+\}$ type. The type of tensors is additive when several types of tensors are multiplied together. The type of tensors must be conserved in differential identities. We call this phenomenon as \textbf{dimensional conservation}.

Recall the Riemannian case. From Obata \cite{MR0303464}, Gidas-Spurck \cite{MR0303464}, M-F. B. V\'eron and L. V\'eron \cite{MR1134481}, and especially Dolbeault-Esteban-Loss \cite{MR3229793}, we know that differential identities are found by multiplying $\Delta u$ in both sides of the equation, and using divergence theorem. Namely,
$$(\Delta u)^2=(\Delta uu_i)_,^{\ i}-(\Delta u)_,^{\ i}u_i=-(u^{ji}u_i)_{,j}+\sum_{i,j=1}^n|u_{ij}|^2+(\Delta uu_i)_,^{\ i},$$
then $u_{ij}$ becomes the main term of some target tensor hoped to be zero. Similar as Riemannian case, by multiplying equation \eqref{equ} with $\Delta u$ and divergence theorem,
\begin{align*}
  &(\Delta u)^2=(\Delta uu_i)_{,}^{\ i}-(\Delta u)_{,}^{\ i}u_i-n\sqrt{-1}u_0\Delta u\\
  =&-(u_{\ j}^{j}+n\sqrt{-1}u_0)_{,}^{\ i}u_i+(\Delta uu_i)_{,}^{\ i}-n\sqrt{-1}u_0\Delta u\\
  =&-(u^{ij}u_i)_{,j}+\sum_{i,j=1}^n|u_{ij}|^2-(n+2)\sqrt{-1}u_0^{\ i}u_i+(\Delta uu_i)_{,}^{\ i}-n\sqrt{-1}u_0\Delta u,
\end{align*}
then we can yield $\displaystyle\sum_{i,j=1}^n|u_{ij}|^2$ term, hence consider $u_{ij}$ as the main term of one of the target tensors. By dimensional conservation, we need a $\{(2,0),1,2,+\}$ type tensor, hence consider $D_{ij}$ defined as above.

Similarly, use the divergence theorem in another way:
\begin{align*}
  &(\Delta u)^2=(\Delta uu_i)_{,}^{\ i}-(\Delta u)_{,}^{\ i}u_i-n\sqrt{-1}u_0\Delta u\\
  =&-(u_{j}^{\ j}-n\sqrt{-1}u_0)_{,}^{\ i}u_i+(\Delta uu_i)_{,}^{\ i}-n\sqrt{-1}u_0\Delta u\\
  =&-(u_j^{\ i}u_i)_,^{\ j}+u_j^{\ i}u_i^{\ j}+n\sqrt{-1}u_0^{\ i}u_i+(\Delta uu_i)_{,}^{\ i}-n\sqrt{-1}u_0\Delta u\\
  =&-(u_j^{\ i}u_i)_,^{\ j}+\sum_{i,j=1}^n|u_{i\overline j}|^2+2\sqrt{-1}u_0u_i^{\ i}+n\sqrt{-1}u_0^{\ i}u_i+(\Delta uu_i)_{,}^{\ i}-n\sqrt{-1}u_0\Delta u,
\end{align*}
then $|u_{i\overline j}|^2$ term can be attained, hence consider a $\{(1,1),1,2,+\}$ type tensor $E_{i\overline j}$ defined as above. For $\displaystyle\sum_{i,j=1}^n|D_{ij}|^2$ and $\displaystyle\sum_{i,j=1}^n|E_{i\overline j}|^2$, $\{(0,0),2,4,+\}$ type identity is enough, such as Riemannian case. However, a $\{(1,0),1,3,-\}$ type tensor $G_i$ occurs by the following invariant tensors argument because of non-commutativity of $Z_i$ and $Z_{\overline i}$ caused by the second layer of CR manifold. \textbf{At last, we need a $\{(0,0),2,6,+\}$ type identity to deal with $\displaystyle\sum_{i=1}^n|G_i|^2$ term, which is just the key identity \eqref{2} in this paper.} The identity \eqref{2} has the same dimensional as Jerison-Lee's identities in \cite{MR0924699}.

Now, we hope that $D_{ij}$ and $E_{i\overline j}$ are zero for some $\alpha$ and $\lambda$. By $E_{i}^{\ i}=0$, we yield
$$c_3=-\frac 1 n,~~c_4=-\frac 1 n,~~c_5=-\frac 1 nc_2,~~c_6=0,$$
then $E_{i\overline j}=\displaystyle u_{i\overline j}+c_2\frac{u_iu_{\overline j}}{u}-\frac 1 n\left(\Delta u+n\sqrt{-1}u_0+c_2\frac{|\nabla u|^2}{u}\right)h_{i\overline j}$.

Set $D_i=\displaystyle\frac{D_{ij}u^j}{u}$, $E_i=\displaystyle\frac{E_{i\overline j}u^{\overline j}}{u}$. By direct computation and using equation \eqref{equ}:
\begin{align}
  \begin{split}\label{d'}
    D_{ij,}^{\ \ \ i}=&u_{ij}^{\ \ i}+c_1\frac{u_j^{\ i}u_i}{u}+c_1\frac{u_j(\Delta u+n\sqrt{-1}u_0)}{u}-c_1\frac{|\nabla u|^2}{u^2}u_j\\
    =&(\Delta u+n\sqrt{-1}u_0)_j+2\sqrt{-1}u_{0j}+R_{j\overline i}u^{\overline i}+c_1\Big[E_{j\overline i}-c_2\frac{u_ju_{\overline i}}{u}+\frac 1 n\Big(\Delta u\\
    &+n\sqrt{-1}u_0+c_2\frac{|\nabla u|^2}{u}\Big)h_{j\overline i}\Big]\frac{u^{\overline i}}{u}+c_1\frac{u_j(\Delta u+n\sqrt{-1}u_0)}{u}-c_1\frac{|\nabla u|^2}{u^2}u_j\\
    =&c_1E_j+(n+2)\sqrt{-1}u_{0j}+(n+1)c_1\frac{\sqrt{-1}u_0u_j}{u}+(\frac{n+1}{n}c_1+\alpha)\frac{\Delta u}{u}u_j\\
    &-(\frac{n-1}{n}c_2+1)c_1\frac{|\nabla u|^2}{u^2}u_j+R_{j\overline i}u^{\overline i}+(1-\alpha)\lambda u_j,
  \end{split}
\end{align}
and
\begin{align}
  \begin{split}\label{e'}
    E_{i\overline j,}^{\ \ \ i}=&u_{i\overline j}^{\ \ i}+c_2\frac{u_{\overline j}^{\ i}u_i}{u}+c_2\frac{u_{\overline j}(\Delta u+n\sqrt{-1}u_0)}{u}-c_2\frac{|\nabla u|^2}{u^2}u_{\overline j}\\
    &-\frac{(\Delta u+n\sqrt{-1}u_0)_{\overline j}}{n}-\frac{c_2}{n}\frac{u_{\overline i\overline j}u^{\overline i}}{u}-\frac{c_2}{n}\frac{u_{i\overline j}u^{i}}{u}+\frac{c_2}{n}\frac{|\nabla u|^2}{u^2}u_{\overline j}\\
    =&(\Delta u+n\sqrt{-1}u_0)_{\overline j}+\frac{n-1}{n}c_2\frac{u_{\overline i\overline j}u^{\overline i}}{u}+c_2\frac{u_{\overline j}(\Delta u+n\sqrt{-1}u_0)}{u}\\
    &-\frac{c_2}{n}\frac{u_{i\overline j}u^i}{u}-\frac{(\Delta u+n\sqrt{-1}u_0)_{\overline j}}{n}-\frac{n-1}{n}c_2\frac{|\nabla u|^2}{u^2}u_{\overline j}\\
    =&-\frac{c_2}{n}\left[E_{i\overline j}-c_2\frac{u_iu_{\overline j}}{u}+\frac 1 n\left(\Delta u+n\sqrt{-1}u_0+c_2\frac{|\nabla u|^2}{u}\right)h_{i\overline j}\right]\frac{u^i}{u}\\
    &+\frac{n-1}{n}c_2\left(D_{\overline i\overline j}-c_1\frac{u_{\overline i}u_{\overline j}}{u}\right)\frac{u^{\overline i}}{u}+(n-1)\sqrt{-1}u_{0\overline j}+nc_2\frac{\sqrt{-1}u_0u_{\overline j}}{u}\\
    &+(c_2+\frac{n-1}{n}\alpha)\frac{\Delta u}{u}u_{\overline j}-\frac{n-1}{n}c_2\frac{|\nabla u|^2}{u^2}u_{\overline j}+\frac{n-1}{n}(1-\alpha)\lambda u_{\overline j}\\
    =&\frac{n-1}{n}c_2D_{\overline j}-\frac{c_2}{n}E_{\overline j}+(n-1)\sqrt{-1}u_{0\overline j}+\frac{n^2-1}{n}c_2\frac{\sqrt{-1}u_0u_{\overline j}}{u}+\frac{n-1}{n}\times\\
    &(\frac{n+1}{n}c_2+\alpha)\frac{\Delta u}{u}u_{\overline j}-\frac{n-1}{n}(c_1-\frac{c_2}{n}+1)c_2\frac{|\nabla u|^2}{u^2}u_{\overline j}+\frac{n-1}{n}(1-\alpha)\lambda u_{\overline j}.
  \end{split}
\end{align}

If $D_{ij}$ and $E_{i\overline j}$ are 0, then $D_{ij,}^{\ \ \ i}$ and $E_{i\overline j,}^{\ \ \ i}$ are also 0, hence 
\begin{align}
  \begin{split}\label{proportional1}
    0=&(n+2)\sqrt{-1}u_{0j}+(n+1)c_1\frac{\sqrt{-1}u_0u_j}{u}+(\frac{n+1}{n}c_1+\alpha)\frac{\Delta u}{u}u_j\\
    &-(\frac{n-1}{n}c_2+1)c_1\frac{|\nabla u|^2}{u^2}u_j+R_{j\overline i}u^{\overline i}+(1-\alpha)\lambda u_j,
  \end{split}
\end{align}
and
\begin{align}
  \begin{split}\label{proportional2}
    0=&(n-1)\sqrt{-1}u_{0\overline j}+\frac{n^2-1}{n}c_2\frac{\sqrt{-1}u_0u_{\overline j}}{u}+\frac{n-1}{n}(\frac{n+1}{n}c_2+\alpha)\frac{\Delta u}{u}u_{\overline j}\\
    &-\frac{n-1}{n}(c_1-\frac{c_2}{n}+1)c_2\frac{|\nabla u|^2}{u^2}u_{\overline j}+\frac{n-1}{n}(1-\alpha)\lambda u_{\overline j}.
  \end{split}
\end{align}
Let the coefficients of $\sqrt{-1}u_{0j}$, $\displaystyle\frac{\sqrt{-1}u_0u_j}{u}$, $\displaystyle\frac{\Delta u}{u}u_j$ and $\displaystyle\frac{|\nabla u|^2}{u^2}u_j$ in \eqref{proportional1} and \eqref{proportional2} are proportional:
$$\frac{n+2}{-(n-1)}=\frac{(n+1)c_1}{\displaystyle-\frac{n^2-1}{n}c_2}=\frac{\displaystyle\frac{n+1}{n}c_1+\alpha}{\displaystyle\frac{n-1}{n}(\frac{n+1}{n}c_2+\alpha)}=\frac{-\displaystyle(\frac{n-1}{n}c_2+1)c_1}{\displaystyle-\frac{n-1}{n}(c_1-\frac{c_2}{n}+1)c_2},$$
then $c_1=c_2=\alpha=0$ or $c_1=-\displaystyle\frac{n+2}{n}$, $c_2=-1$, $\alpha=\displaystyle\frac{n+2}{n}$. \textbf{Hence the critical exponent $\alpha=\displaystyle\frac{n+2}{n}$ can be determined with this method. $c_1=-\displaystyle\frac{n+2}{n}$ and $c_2=-1$ in the critical exponent case are essential when we answer the question raised by Jerison-Lee and prove the Theorem \ref{JL2} in Section 4.}

In the following, we concentrate on the subcritical exponent case $1<\alpha<\displaystyle\frac{n+2}{n}$. \textbf{By the rigidity theorem in Riemannian case, such as \cite{MR3229793}, the terms with $|\nabla u|^4$ are needed in the proof of Theorem \ref{CR}.}

Hence, only let the coefficients of $\sqrt{-1}u_{0i}$, $\displaystyle\frac{\sqrt{-1}u_0u_i}{u}$ and $\displaystyle\frac{\Delta u}{u}u_i$ in \eqref{proportional1} and \eqref{proportional2} are proportional:
$$\frac{n+2}{-(n-1)}=\frac{(n+1)c_1}{\displaystyle-\frac{n^2-1}{n}c_2}=\frac{\displaystyle\frac{n+1}{n}c_1+\alpha}{\displaystyle\frac{n-1}{n}(\frac{n+1}{n}c_2+\alpha)},$$
then $c_1=-\alpha$, $c_2=-\displaystyle\frac{n\alpha}{n+2}$. Rewrite $D_{ij}$, $E_{i\overline j}$, and define a $\{(1,0),1,3,+\}$ type tensor $G_i$:

$$D_{ij}=u_{ij}-\alpha\frac{u_iu_j}{u},$$
$$E_{i\overline j}=u_{i\overline j}-\frac{n\alpha}{n+2}\frac{u_iu_{\overline j}}{u}-\frac 1 n\left(\Delta u+n\sqrt{-1}u_0-\frac{n\alpha}{n+2}\frac{|\nabla u|^2}{u}\right)h_{i\overline j},$$
\begin{align*}
  G_i=&n\sqrt{-1}u_{0i}-\frac{n(n+1)}{n+2}\alpha\frac{\sqrt{-1}u_0u_i}{u}-\frac{\alpha}{n+2}\frac{\Delta u}{u}u_i\\
  &+\frac{n\alpha}{n+2}\left(\frac{n+1}{n+2}\alpha-1\right)\frac{|\nabla u|^2}{u^2}u_i+(\alpha-1)\lambda u_i.
\end{align*}
Rewrite $D_{ij,}^{\ \ \ i}$ and $E_{i\overline j,}^{\ \ \ i}$ in \eqref{d'} and \eqref{e'}:
\begin{equation}\label{Dij}
  D_{ij,}^{\ \ \ i}=-\alpha E_j+\frac{n+2}{n}G_j+2\alpha(1-\frac{n\alpha}{n+2})\frac{|\nabla u|^2}{u^2}u_j+R_{j\overline i}u^{\overline i}-\frac{2(n+1)}{n}(\alpha-1)\lambda u_j,
\end{equation}
\begin{equation}\label{Eij}
  E_{i\overline j,}^{\ \ \ i}=-\frac{n-1}{n+2}\alpha D_{\overline j}+\frac{\alpha}{n+2}E_{\overline j}-\frac{n-1}{n}G_{\overline j}.
\end{equation}
Now, the covariant derivatives of $D_{ij}$ and $E_{i\overline j}$ and $G_i$ are also composed of $D_{ij}$, $E_{i\overline j}$, $G_i$ and $\displaystyle\frac{|\nabla u|^2}{u^2}u_j$ terms in some suitable curvature condition. The $\displaystyle\frac{|\nabla u|^2}{u^2}u_j$ term is vanishing in the critical exponent case. The invariance of $D_{ij}$, $E_{i\overline j}$, and $G_i$ in differentiating process are reasonable since those tensors are hoped to be zero. Hence, we call $D_{ij}$, $E_{i\overline j}$ and $G_i$ as \textbf{invariant tensors}. With the invariance arguments above, invariant tensors can be deduced without any geometric background.

Some notations are needed:
$$E_{\overline ij}=\overline{E_{i\overline j}},~~L_{i\overline j}=\frac{u_iu_{\overline j}}{u}-\frac 1 n\frac{|\nabla u|^2}{u}h_{i\overline j},~~\mathscr R=R_{i\overline j}u^iu^{\overline j}-\frac{2(n+1)}{n}(\alpha-1)\lambda|\nabla u|^2.$$

For convenience, the following four lemmas are needed.

\begin{lemma}\label{prep1}
  \ 

  (1) $E_i^{\ i}:=E_{i\overline j}h^{i\overline j}=0$, $E_{i\overline j}=E_{\overline ji}$, $E_iu^i\in\mathbb R$;

  (2) $L_{i}^{\ i}=0$, $E_iu^i=E_{i\overline j}L^{i\overline j}$, $\displaystyle\sum_{i,j=1}^n|L_{i\overline j}|^2=\frac{n-1}{n}\frac{|\nabla u|^4}{u^2}$;

  (3) Assume that \eqref{condition} and $\lambda\leqslant\displaystyle\frac{n}{2(\alpha-1)}$ hold, then $\mathscr R\geqslant0$.
\end{lemma}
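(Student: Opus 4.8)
The plan is to verify the three parts by direct computation from the explicit expressions for $E_{i\overline j}$, $L_{i\overline j}$ and $\mathscr R$ given above, using only the commutation formulae, the reality of $u$, and the pointwise hypothesis $(\ref{condition})$; no global analysis is needed.

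For part (1) I would first trace $E_{i\overline j}$ against $h^{i\overline j}$. Since $u_{i\overline j}h^{i\overline j}=u_i^{\ i}=\Delta u+n\sqrt{-1}u_0$, $u_iu_{\overline j}h^{i\overline j}=|\nabla u|^2$ and $h_{i\overline j}h^{i\overline j}=n$, the three correction terms cancel and $E_i^{\ i}=0$ — indeed this is exactly the relation that fixed $c_3,c_4,c_5,c_6$. For the Hermitian symmetry I would compute $\overline{E_{j\overline i}}$, using that $u$ is real (so $\overline{u_{j\overline i}}=u_{\overline j i}$ and $\overline{u_0}=u_0$) and that $\overline{h_{j\overline i}}=h_{i\overline j}$; subtracting from $E_{i\overline j}$, everything matches except $(u_{i\overline j}-u_{\overline j i})-\tfrac1n(2n\sqrt{-1}u_0)h_{i\overline j}$, which vanishes by $u_{i\overline j}-u_{\overline j i}=2\sqrt{-1}h_{i\overline j}u_0$. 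Hence $E_{i\overline j}=\overline{E_{j\overline i}}=E_{\overline j i}$. Finally $\overline{E_iu^i}=\overline{E_{i\overline j}u^iu^{\overline j}/u}=E_{\overline i j}u^{\overline i}u^j/u=E_iu^i$ by the symmetry just proved, so $E_iu^i\in\mathbb R$.

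For part (2), $L_i^{\ i}=0$ follows at once from $u_iu_{\overline j}h^{i\overline j}=|\nabla u|^2$ and $h_{i\overline j}h^{i\overline j}=n$; the identity $E_iu^i=E_{i\overline j}L^{i\overline j}$ follows by expanding $L^{i\overline j}=u^iu^{\overline j}/u-\tfrac1n\tfrac{|\nabla u|^2}{u}h^{i\overline j}$ and using $E_{i\overline j}h^{i\overline j}=0$ from part (1), leaving only $E_{i\overline j}u^iu^{\overline j}/u=E_iu^i$; and for the norm identity I would pick a frame unitary for the Levi form at the point and expand $|L_{i\overline j}|^2=L_{i\overline j}\overline{L_{i\overline j}}$ into the three sums $\sum_{i,j}u_iu_{\overline i}u_ju_{\overline j}/u^2=|\nabla u|^4/u^2$, two cross terms each equal to $-\tfrac1n|\nabla u|^4/u^2$, and $\tfrac{1}{n^2}|\nabla u|^4/u^2\cdot n=\tfrac1n|\nabla u|^4/u^2$, whose sum is $(1-\tfrac1n)|\nabla u|^4/u^2=\tfrac{n-1}{n}\tfrac{|\nabla u|^4}{u^2}$. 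For part (3) I would apply $(\ref{condition})$ to the $(1,0)$-gradient $Z=u^{\ i}Z_i$, which gives $R_{i\overline j}u^iu^{\overline j}\geqslant(n+1)\langle\nabla u,\nabla u\rangle_{L_\theta}=(n+1)|\nabla u|^2$; then $\mathscr R\geqslant\big(n+1-\tfrac{2(n+1)}{n}(\alpha-1)\lambda\big)|\nabla u|^2=(n+1)\big(1-\tfrac{2(\alpha-1)\lambda}{n}\big)|\nabla u|^2\geqslant0$, the last inequality because $\alpha>1$ and $\lambda\leqslant\tfrac{n}{2(\alpha-1)}$.

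None of the steps is genuinely hard; the only place that needs care is the bookkeeping of complex conjugation and barred versus unbarred indices in part (1), in particular getting the sign in $u_{i\overline j}-u_{\overline j i}=2\sqrt{-1}h_{i\overline j}u_0$ right, since this commutator is exactly what forces the Hermitian symmetry of $E_{i\overline j}$ and hence the reality of $E_iu^i$ — a fact that will be used later when $E_iu^i$ appears inside squared real quantities.
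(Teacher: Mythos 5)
Your proposal is correct and follows essentially the same route as the paper: the trace computation for $E_i^{\ i}=0$ and $L_i^{\ i}=0$, the commutator $u_{i\overline j}-u_{\overline j i}=2\sqrt{-1}h_{i\overline j}u_0$ for the Hermitian symmetry and hence the reality of $E_iu^i$, and the Ricci lower bound from $(\ref{condition})$ combined with $\lambda\leqslant\frac{n}{2(\alpha-1)}$ for $\mathscr R\geqslant0$. The only cosmetic difference is that you expand $\sum_{i,j}|L_{i\overline j}|^2$ term by term in a unitary frame, whereas the paper shortcuts via the trace-free property of $L$; both give $\frac{n-1}{n}\frac{|\nabla u|^4}{u^2}$.
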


\begin{proof}
  $\displaystyle E_i^{\ i}=u_i^{\ i}-\frac{n\alpha}{n+2}\frac{|\nabla u|^2}{u}-\left(\Delta u+n\sqrt{-1}u_0-\frac{n\alpha}{n+2}\frac{|\nabla u|^2}{u}\right)=0$, then $L_i^{\ i}=0$ is proved similarly. Hence, $\displaystyle E_{i\overline j}L^{i\overline j}=E_{i\overline j}\cdot\frac{u^iu^{\overline j}}{u}=E_iu^i$, $\displaystyle\sum_{i,j=1}^n|L_{i\overline j}|^2=L_{i\overline j}\cdot\frac{u^{i}u^{\overline j}}{u}=\frac{n-1}{n}\frac{|\nabla u|^4}{u^2}$.

  By $u_{i\overline j}-u_{\overline ji}=2\sqrt{-1}h_{i\overline j}u_0$, we yield that
  \begin{align*}
    E_{i\overline j}=&\displaystyle u_{i\overline j}-\frac{n\alpha}{n+2}\frac{u_iu_{\overline j}}{u}-\frac 1 n\left(\Delta u+n\sqrt{-1}u_0-\frac{n\alpha}{n+2}\frac{|\nabla u|^2}{u}\right)h_{i\overline j}\\
    =&\displaystyle u_{\overline j i}-\frac{n\alpha}{n+2}\frac{u_{\overline j}u_i}{u}-\frac 1 n\left(\Delta u-n\sqrt{-1}u_0-\frac{n\alpha}{n+2}\frac{|\nabla u|^2}{u}\right)h_{\overline ji}=E_{\overline ji},
  \end{align*}
  then $E_iu^i=\displaystyle\frac{E_{i\overline j}u^{i}u^{\overline j}}{u}=\frac{E_{\overline ji}u^{i}u^{\overline j}}{u}=E_{\overline j}u^{\overline j}$, i.e. $E_iu^i\in\mathbb R$.

  If $\operatorname{Ric}(Z,Z)\geqslant(n+1)\langle Z,Z\rangle_{L_\theta}$ and $\lambda\leqslant\displaystyle\frac{n}{2(\alpha-1)}$,
  $$\mathscr R\geqslant (n+1)|\nabla u|^2-\frac{2(n+1)}{n}(\alpha-1)\lambda|\nabla u|^2\geqslant0.$$
\end{proof}

\begin{lemma}\label{Cauchy}
    $\displaystyle\frac{|\nabla u|^2}{u^2}\sum_{i,j}|D_{ij}|^2\geqslant\sum_i|D_i|^2$, $\displaystyle\frac{|\nabla u|^2}{u^2}\sum_{i,j}|E_{i\overline j}|^2\geqslant\frac{n}{n-1}\sum_i|E_i|^2$ if $n\geqslant2$.
\end{lemma}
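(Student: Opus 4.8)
The plan is to derive both inequalities from the Cauchy--Schwarz inequality after passing to a local orthonormal frame, the only genuine point being that the improved constant $\frac{n}{n-1}$ in the second one must be extracted from the Hermitian and trace-free structure of $E_{i\overline j}$ recorded in Lemma \ref{prep1}.

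Since both statements are pointwise, I would fix $p\in M$ and work in a local frame $\{Z_i\}$ that is orthonormal for the Levi form at $p$, so $h_{i\overline j}=\delta_{ij}$; then index raising is trivial, $|\nabla u|^2=\sum_j|u_j|^2$, and the sums $\sum_{i,j}|D_{ij}|^2$, $\sum_{i,j}|E_{i\overline j}|^2$, $\sum_i|D_i|^2$, $\sum_i|E_i|^2$ are the literal sums of squared moduli of components. For the first inequality, for each fixed $i$ write $D_i=\frac1u\sum_jD_{ij}u^{j}$ and apply Cauchy--Schwarz in $j$:
$$|D_i|^2\leqslant\frac{1}{u^2}\Big(\sum_j|D_{ij}|^2\Big)\Big(\sum_j|u^{j}|^2\Big)=\frac{|\nabla u|^2}{u^2}\sum_j|D_{ij}|^2 ,$$
and summing over $i$ gives the claim; nothing about $D_{ij}$ beyond its definition is used.

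For the second inequality I would first use Lemma \ref{prep1}(1): the relation $E_{i\overline j}=E_{\overline j i}$ together with the conjugation convention $E_{\overline i j}=\overline{E_{i\overline j}}$ makes the sesquilinear form $(Z,W)\mapsto E_{i\overline j}Z^i\overline{W^j}$ on $T^{(1,0)}M$ Hermitian, and $E_i^{\ i}=0$ makes it trace-free. Hence I may also choose the orthonormal frame at $p$ to diagonalize it, so that $E_{i\overline j}=\mu_i\delta_{ij}$ with $\mu_i\in\mathbb R$ and $\sum_i\mu_i=0$. Then $\sum_{i,j}|E_{i\overline j}|^2=\sum_i\mu_i^2$, while $|E_i|^2=\mu_i^2|u^{\overline i}|^2/u^2$ with $\sum_i|u^{\overline i}|^2=|\nabla u|^2$, so that
$$\sum_i|E_i|^2=\frac{1}{u^2}\sum_i\mu_i^2|u^{\overline i}|^2\leqslant\frac{\max_i\mu_i^2}{u^2}\sum_i|u^{\overline i}|^2=\max_i\mu_i^2\cdot\frac{|\nabla u|^2}{u^2} .$$

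It then remains to record the elementary fact that for real numbers with $\sum_{i=1}^n\mu_i=0$ and $n\geqslant2$ one has $\max_i\mu_i^2\leqslant\frac{n-1}{n}\sum_i\mu_i^2$: if the maximum is attained at $i_0$, then $\mu_{i_0}^2=\big(\sum_{i\neq i_0}\mu_i\big)^2\leqslant(n-1)\sum_{i\neq i_0}\mu_i^2=(n-1)\big(\sum_i\mu_i^2-\mu_{i_0}^2\big)$, and rearranging gives $n\mu_{i_0}^2\leqslant(n-1)\sum_i\mu_i^2$. Feeding this into the last display yields $\sum_i|E_i|^2\leqslant\frac{n-1}{n}\frac{|\nabla u|^2}{u^2}\sum_{i,j}|E_{i\overline j}|^2$. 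There is no real obstacle here; the one step that needs care is the simultaneous orthonormalization and diagonalization, and it is precisely the trace-freeness of $E_{i\overline j}$ that upgrades the naive Cauchy--Schwarz constant $1$ to $\frac{n-1}{n}$ --- correspondingly the hypothesis $n\geqslant2$ is needed, and for $n=1$ the tensor $E_{i\overline j}$ vanishes identically anyway.
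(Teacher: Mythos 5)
Your proof is correct, and the two inequalities are handled with the same ingredients as in the paper (pointwise linear algebra, Cauchy--Schwarz, and the Hermitian trace-free structure of $E_{i\overline j}$ from Lemma \ref{prep1}), but the reduction for the second inequality is organized differently. The first inequality is exactly the paper's argument: row-wise Cauchy--Schwarz and summation over $i$. For the second, the paper also passes to a matrix statement for a Hermitian trace-free $A$, but it normalizes the \emph{vector}: after a unitary rotation and rescaling it takes $\mu=(1,0,\dots,0)^T$, diagonalizes only the lower-right $(n-1)\times(n-1)$ block, expands $\sum_{i,j}|A_{ij}|^2\|\mu\|^2-\frac{n}{n-1}\sum_{i,j}|A_{ij}\mu_j|^2$ entrywise, and uses $\operatorname{tr}A=0$ through the identity that the leftover equals $\frac{1}{n-1}\sum_{2\leqslant i<j\leqslant n}|A_{ii}-A_{jj}|^2\geqslant0$. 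You instead diagonalize the \emph{matrix} $E_{i\overline j}$ fully (spectral theorem for the Hermitian form, legitimate since all contracted quantities $\sum_{i,j}|E_{i\overline j}|^2$, $\sum_i|E_i|^2$, $|\nabla u|^2$ are invariant under a unitary change of the Levi-orthonormal frame), keep the gradient general, and reduce the sharp constant to the elementary bound $\max_i\mu_i^2\leqslant\frac{n-1}{n}\sum_i\mu_i^2$ for real eigenvalues summing to zero --- that is, the operator-norm versus Frobenius-norm comparison for trace-free Hermitian matrices. Both routes are sound and use the same two facts ($E_{i\overline j}=E_{\overline ji}$ and $E_i^{\ i}=0$); yours isolates the constant as a clean spectral statement (and makes visible that near-equality forces $\nabla u$ to align with an extreme eigendirection), while the paper's stays at the level of matrix entries and exhibits the slack explicitly as $\frac{1}{n-1}\sum_{2\leqslant i<j\leqslant n}|A_{ii}-A_{jj}|^2$. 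Your closing remark that $E_{1\overline 1}=0$ when $n=1$ matches the paper's later use of that fact.
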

\begin{proof}
    Assume that $A\in\mathbb C^{n\times n}$ is Hermitian, $\mu\in\mathbb C^{n\times 1}$. By Cauchy inequality, $$\sum_{j=1}^n|A_{ij}\mu_j|^2\leqslant\sum_{j=1}^n|A_{ij}|^2\|\mu\|^2.$$
    Sum $i$ from 1 to $n$: $\displaystyle\sum_{i,j=1}^n|A_{ij}\mu_j|^2\leqslant\sum_{i,j=1}^n|A_{ij}|^2\|\mu\|^2$. Then $u^2\displaystyle\sum_i|D_i|^2\leqslant|\nabla u|^2\sum_{i,j}|D_{ij}|^2$.

    For $n\geqslant2$, assume that $\operatorname{tr}A=0$ additionally. Without loss of generality, assume that $A_{ij}=0$ if $i\neq j$ and $i,j\geqslant2$, $\mu=(1,0,\cdots,0)^T$, then
    \begin{align*}
        &\sum_{i,j=1}^n|A_{ij}|^2\|\mu\|^2-\frac{n}{n-1}\sum_{i,j=1}^n|A_{ij}\mu_j|^2\\
        &=\sum_{i=1}^n|A_{ii}|^2+2\sum_{i=2}^n|A_{i1}|^2-\frac{n}{n-1}|A_{11}|^2-\frac{n}{n-1}\sum_{i=2}^n|A_{i1}|^2\\
        &\geqslant\sum_{i=2}^n|A_{ii}|^2-\frac{1}{n-1}|A_{11}|^2\\
        &\xlongequal{\operatorname{tr}A=0}\frac{1}{n-1}\sum_{2\leqslant i<j\leqslant n}|A_{ii}-A_{jj}|^2\geqslant0.
    \end{align*}
    Hence $|\nabla u|^2\displaystyle\sum_{i,j}|E_{i\overline j}|^2\geqslant\frac{n}{n-1}u^2\sum_i|E_i|^2$ for $n\geqslant 2$.
\end{proof}

\begin{lemma}\label{prep2}
  $$(\Delta u)_{,i}=\lambda u_i-\alpha u^{\alpha-1}u_i=\alpha\frac{\Delta u}{u}u_i+(1-\alpha)\lambda u_i,$$
  $$(|\nabla u|^2)_{,\overline i}=uD_{\overline i}+uE_{\overline i}+\frac{2n+1}{n+2}\alpha\frac{|\nabla u|^2}{u}u_{\overline i}+\frac 1 n\Delta u u_{\overline i}+\sqrt{-1}u_0u_{\overline i},$$
  \begin{align*}
    n\sqrt{-1}u_{0\overline i}=&-G_{\overline i}+\frac{n(n+1)}{n+2}\alpha\frac{\sqrt{-1}u_0u_{\overline i}}{u}-\frac{\alpha}{n+2}\frac{\Delta u}{u}u_{\overline i}\\
    &+\frac{n\alpha}{n+2}\left(\frac{n+1}{n+2}\alpha-1\right)\frac{|\nabla u|^2}{u^2}u_{\overline i}+(\alpha-1)\lambda u_{\overline i}.
  \end{align*}
\end{lemma}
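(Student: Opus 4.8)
The plan is to obtain all three identities by direct computation from equation~$(\ref{equ})$, the commutation formulae, and the definitions of the invariant tensors $D_{ij}$, $E_{i\overline j}$, $G_i$ from Section~2; there is no conceptual difficulty, only bookkeeping.

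For the first identity I would apply $Z_i$ to $(\ref{equ})$, written as $\Delta u=\lambda u-u^{\alpha}$, to get $(\Delta u)_{,i}=\lambda u_i-\alpha u^{\alpha-1}u_i$ (so the middle member is to be read with this sign), and then eliminate $u^{\alpha-1}$ using $u^{\alpha}=\lambda u-\Delta u$ once more, which turns the right-hand side into $\alpha\frac{\Delta u}{u}u_i+(1-\alpha)\lambda u_i$. For the third identity the plan is purely algebraic: the defining relation for $G_i$ in Section~2 already reads $G_i=n\sqrt{-1}u_{0i}-\dots$, so I would solve it for $n\sqrt{-1}u_{0i}$ and then take the complex conjugate of the resulting equation, using that $u$, $u_0$, $\Delta u$ and $|\nabla u|^2$ are real, that $\overline{u_i}=u_{\overline i}$ and $\overline{u_{0i}}=u_{0\overline i}$, that $\overline{\sqrt{-1}}=-\sqrt{-1}$, and the convention $G_{\overline i}:=\overline{G_i}$; the overall minus sign on the right-hand side of the stated identity is exactly the net effect of conjugating $\sqrt{-1}u_{0i}\mapsto-\sqrt{-1}u_{0\overline i}$ and rescaling by $-1$.

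The second identity is the only one that calls for an actual (short) computation. I would start from $|\nabla u|^2=u_ku^k=h^{k\overline l}u_ku_{\overline l}$ and differentiate along $Z_{\overline i}$, using that the Levi form and its inverse are parallel, to get $(|\nabla u|^2)_{,\overline i}=u_{k\overline i}u^k+u_{\overline k\overline i}u^{\overline k}$. Into the first term I substitute the definition of $E_{k\overline i}$ and contract with $u^k$, using $E_{k\overline i}u^k=uE_{\overline i}$ and the symmetry $E_{i\overline j}=E_{\overline ji}$ from Lemma~\ref{prep1}; this yields $uE_{\overline i}$, a $\frac1n\Delta u\,u_{\overline i}$ term, a $\sqrt{-1}u_0u_{\overline i}$ term coming from $\frac1n\cdot n\sqrt{-1}u_0$, and two multiples of $\frac{|\nabla u|^2}{u}u_{\overline i}$ with coefficients $\frac{n\alpha}{n+2}$ and $-\frac{\alpha}{n+2}$. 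Into the second term I substitute $D_{\overline k\overline i}=u_{\overline k\overline i}-\alpha\frac{u_{\overline k}u_{\overline i}}{u}$ and contract with $u^{\overline k}$, using $D_{\overline k\overline i}u^{\overline k}=uD_{\overline i}$ (valid since $D_{ij}=D_{ji}$); this contributes $uD_{\overline i}$ and a third multiple $\alpha\frac{|\nabla u|^2}{u}u_{\overline i}$. Summing, the coefficient of $\frac{|\nabla u|^2}{u}u_{\overline i}$ collapses to $\frac{n\alpha}{n+2}-\frac{\alpha}{n+2}+\alpha=\frac{(2n+1)\alpha}{n+2}$, which is the asserted formula.

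I expect the main (and essentially only) point requiring care to be this last step: keeping track of the several $\frac{|\nabla u|^2}{u}u_{\overline i}$ contributions produced by $E_{k\overline i}$ and by $D_{\overline k\overline i}$ and checking that they add up to $\frac{2n+1}{n+2}\alpha$, and likewise that the imaginary parts coming from $\sqrt{-1}u_0$ collapse to a single $\sqrt{-1}u_0u_{\overline i}$. No genuine obstacle is anticipated; this lemma is a list of normalizations to be invoked freely in the proof of Theorem~\ref{CR}.
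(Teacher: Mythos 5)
Your verification is correct and is essentially the paper's own argument: the paper disposes of this lemma with a one-line remark that all three identities follow by direct computation from equation $(\ref{equ})$ and the definitions of $D_{ij}$, $E_{i\overline j}$ and $G_i$, which is exactly the bookkeeping you carry out (your coefficient count $\frac{n\alpha}{n+2}-\frac{\alpha}{n+2}+\alpha=\frac{2n+1}{n+2}\alpha$ and the contractions $E_{k\overline i}u^k=uE_{\overline i}$, $D_{\overline k\overline i}u^{\overline k}=uD_{\overline i}$ are the whole content). You are also right to read the middle member of the first identity as $\lambda u_i-\alpha u^{\alpha-1}u_i$: that sign is the only one consistent with $\Delta u=\lambda u-u^{\alpha}$ and with the final expression $\alpha\frac{\Delta u}{u}u_i+(1-\alpha)\lambda u_i$, so the plus sign in the statement is a typo.
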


\begin{proof}
  They can be checked by equation \eqref{equ} and definitions of $D_{ij}$, $E_{i\overline j}$ and $G_i$ easily.
\end{proof}

\begin{lemma}\label{invariance}
  \begin{align}\label{d}
    \begin{split}
      D_{i,}^{\ \ i}=&u^{-1}\sum_{i,j=1}^n|D_{ij}|^2+(\alpha-1)\frac{D_iu^i}{u}-\alpha\frac{E_iu^i}{u}+\frac{n+2}{n}\frac{G_iu^i}{u}\\
      &+2\alpha(1-\frac{n\alpha}{n+2})\frac{|\nabla u|^4}{u^3}+u^{-1}\mathscr R,
    \end{split}
  \end{align}
  \begin{equation}\label{e}
    E_{i,}^{\ \ i}=u^{-1}\sum_{i,j=1}^n|E_{i\overline j}|^2-\frac{n-1}{n+2}\alpha\frac{D_{\overline i}u^{\overline i}}{u}+\left(\frac{n+1}{n+2}\alpha-1\right)\frac{E_{\overline i}u^{\overline i}}{u}-\frac{n-1}{n}\frac{G_{\overline i}u^{\overline i}}{u},
  \end{equation}
  \begin{equation}\label{g}
      \operatorname{Im}G_{i,}^{\ \ i}=\operatorname{Im}\left[\frac{n\alpha}{n+2}\left(\frac{n+1}{n+2}\alpha-1\right)\frac{D_{\overline i}u^{\overline i}}{u}+\frac{n+1}{n+2}\alpha\frac{G_{\overline i}u^{\overline i}}{u}\right].
  \end{equation}
\end{lemma}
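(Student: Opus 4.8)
The plan is to prove the three identities \eqref{d}, \eqref{e}, \eqref{g} by direct computation, all in the same spirit: each asks for the divergence of one of the first-order tensors $D_i=u^{-1}D_{ij}u^j$, $E_i=u^{-1}E_{i\overline j}u^{\overline j}$, or $G_i$, so in each case I would apply the Leibniz rule to $(\,\cdot\,)_{,}^{\ \ i}=h^{i\overline k}(\,\cdot\,)_{,\overline k}$ and then substitute what is already in hand: \eqref{Dij}, \eqref{Eij}, Lemma \ref{prep2}, the algebraic relations of Lemma \ref{prep1}, and the commutation formulae from the introduction under the torsion-free half of \eqref{condition}. For \eqref{d}, expanding $D_{i,}^{\ \ i}$ with $D_i=u^{-1}D_{ij}u^j$ gives three Leibniz pieces. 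The piece hitting $u^{-1}$ gives $-u^{-1}D_iu^i$ (using $D_{ij}u^iu^j=uD_iu^i$); the piece hitting $D_{ij}$ is $u^{-1}(D_{ij,}^{\ \ \ i})u^j$, into which I substitute \eqref{Dij} and contract with $u^j$, producing $-\alpha\frac{E_iu^i}{u}+\frac{n+2}{n}\frac{G_iu^i}{u}+2\alpha(1-\frac{n\alpha}{n+2})\frac{|\nabla u|^4}{u^3}$ together with $\frac1u\bigl(R_{i\overline j}u^iu^{\overline j}-\frac{2(n+1)}{n}(\alpha-1)\lambda|\nabla u|^2\bigr)=u^{-1}\mathscr R$, so the Ricci and $\lambda$ terms repackage exactly into $\mathscr R$; the piece hitting $u^j$ uses $h^{i\overline k}h^{j\overline l}u_{\overline k\overline l}$ (the derivative of $u^j$) together with the conjugate of the definition of $D_{ij}$, i.e. $u_{\overline k\overline l}=D_{\overline k\overline l}+\alpha\frac{u_{\overline k}u_{\overline l}}{u}$, giving $u^{-1}\sum_{i,j}|D_{ij}|^2+\alpha\frac{D_iu^i}{u}$. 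Summing the three, the two $D_iu^i$ contributions combine to $(\alpha-1)\frac{D_iu^i}{u}$, which is \eqref{d}; note that \eqref{d} needs no commutation beyond what already went into \eqref{Dij}.

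Identity \eqref{e} is entirely parallel with $E_i=u^{-1}E_{i\overline j}u^{\overline j}$. The $u^{-1}$-piece gives $-u^{-1}E_iu^i$; the $E_{i\overline j}$-piece uses \eqref{Eij} contracted with $u^{\overline j}$ and supplies the $D_{\overline i}u^{\overline i}$, $E_{\overline i}u^{\overline i}$ and $G_{\overline i}u^{\overline i}$ terms with the asserted coefficients; the $u^{\overline j}$-piece uses $h^{i\overline k}h^{m\overline j}u_{m\overline k}$ (the derivative of $u^{\overline j}$) together with the definition of $E_{m\overline k}$ to replace $u_{m\overline k}$, after which the $h_{m\overline k}$-part dies by $E_i^{\ i}=0$, the $E_{m\overline k}$-part yields $u^{-1}\sum_{i,j}|E_{i\overline j}|^2$ — here the Hermitian symmetry $E_{i\overline j}=E_{\overline j i}$ of Lemma \ref{prep1}(1) is exactly what lets the natural contraction be read as $\sum|E_{i\overline j}|^2$ — and the $\frac{u_mu_{\overline k}}{u}$-part yields $\frac{n\alpha}{n+2}\frac{E_iu^i}{u}$. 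Since $E_iu^i=E_{\overline i}u^{\overline i}\in\mathbb R$ by Lemma \ref{prep1}(1), the three $Eu$ contributions add to $(\frac{n+1}{n+2}\alpha-1)\frac{E_{\overline i}u^{\overline i}}{u}$, which gives \eqref{e}.

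The identity \eqref{g} is where I expect the real difficulty. Unlike $D_i$ and $E_i$, the tensor $G_i$ carries the third-order term $n\sqrt{-1}u_{0i}$, so computing $G_{i,}^{\ \ i}=h^{i\overline k}G_{i,\overline k}$ forces a commutation of an anti-holomorphic derivative past the Reeb derivative and then past a holomorphic one: the torsion commutators $f_{,0i}=f_{,i0}$ and $f_{,0\overline i}=f_{,\overline i0}$ are trivial under \eqref{condition}, but the curvature commutator $f_{,ij\overline k}-f_{,i\overline kj}=2\sqrt{-1}h_{j\overline k}f_{,i0}+R_{i\ j\overline k}^{\ l}f_{,l}$ (and its conjugate) still enters. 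After substituting the formulas of Lemma \ref{prep2} for $(\Delta u)_{,\overline i}$, $(|\nabla u|^2)_{,\overline i}$ and $u_{0\overline i}$, a long list of terms appears, and the crux is that every term which is not part of the stated combination of $\frac{D_{\overline i}u^{\overline i}}{u}$ and $\frac{G_{\overline i}u^{\overline i}}{u}$ is a \emph{real} scalar — a multiple of $\frac{|\nabla u|^4}{u^3}$, $\frac{\Delta u\,|\nabla u|^2}{u^3}$, $\lambda\frac{|\nabla u|^2}{u}$ or $\frac{R_{i\overline j}u^iu^{\overline j}}{u}$ — hence disappears under $\operatorname{Im}$; this reality is precisely the dimensional-conservation statement of this section, that a scalar of $\{\,\cdot\,,\cdot\,,\cdot\,,+\}$ type built from the real data $u,\lambda$ and the real curvature is real. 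Keeping this real/imaginary ledger straight through the many terms — making sure that no stray $\sqrt{-1}$ or $u_0$ survives in the part that is not $\operatorname{Im}$-trivial — is the delicate step. A possibly cleaner alternative is to solve \eqref{Dij} for $\frac{n+2}{n}G_j$ in terms of $D_{ij,}^{\ \ \ i}$ and the lower-order terms and then take one further divergence, so that $\operatorname{Im}G_{i,}^{\ \ i}$ is controlled by the symmetrized second anti-holomorphic divergence of the symmetric tensor $D_{ij}$, whose relevant imaginary part reduces via the tensorial commutator to curvature and $u_0$ terms that \eqref{condition} renders manageable.
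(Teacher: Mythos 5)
Your treatment of \eqref{d} and \eqref{e} is correct and coincides with the intended direct check in the paper: apply the Leibniz rule to $D_i=u^{-1}D_{ij}u^j$ and $E_i=u^{-1}E_{i\overline j}u^{\overline j}$, substitute \eqref{Dij}, \eqref{Eij} for the divergences of the second-order tensors and the definitions of $D_{ij}$, $E_{i\overline j}$ for the remaining second-derivative factors, and use Lemma \ref{prep1} ($E_i^{\ i}=0$, the Hermitian symmetry, the reality of $E_iu^i$); your repackaging of the Ricci and $\lambda$ terms into $\mathscr R$ is also right.

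The gap is in \eqref{g}. The mechanism you state --- that after substituting Lemma \ref{prep2} every term outside the asserted $D$/$G$ combination is a real, $u_0$-free scalar (a multiple of $|\nabla u|^4/u^3$, $\Delta u\,|\nabla u|^2/u^3$, $\lambda|\nabla u|^2/u$ or $R_{i\overline j}u^iu^{\overline j}/u$) and therefore drops under $\operatorname{Im}$ --- is not what happens, and the parity principle you invoke is not valid: a scalar of ``$+$'' type need not be real (e.g.\ $\sqrt{-1}u_0$ is of $+$ type and purely imaginary). Expanding $G_{i,}^{\ \ i}$ produces, besides the contractions with $D_{\overline i}$ and $u_{0\overline i}$, genuinely imaginary scalars such as $n\sqrt{-1}(\Delta u)_{,0}$, $\sqrt{-1}u_0\Delta u/u$, $\sqrt{-1}u_0|\nabla u|^2/u^2$, $\sqrt{-1}\lambda u_0$, whose imaginary parts are nonzero real quantities surviving $\operatorname{Im}$. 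The identity closes for two non-formal reasons: the multiples of $u_0\Delta u/u$, $\lambda u_0$, $u^{\alpha-1}u_0$ cancel exactly only after substituting the equation $\Delta u=\lambda u-u^\alpha$ (this is the cancellation $n\operatorname{Im}\sqrt{-1}(\Delta u)_{,0}+n(\alpha-1)\lambda u_0-n\alpha\,u_0\Delta u/u=0$ in the paper's computation), and the surviving multiple $\frac{n(n+1)^2}{(n+2)^2}\alpha^2\,u_0|\nabla u|^2/u^2$, together with $-\frac{n(n+1)}{n+2}\alpha\operatorname{Im}(\sqrt{-1}u_{0\overline i}u^{\overline i}/u)$, is precisely what reassembles into $\frac{n+1}{n+2}\alpha\operatorname{Im}(G_{\overline i}u^{\overline i}/u)$; the coefficients must match exactly, which has to be verified, not inferred from reality/type considerations. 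Your proposal neither performs this verification nor gives a correct reason why it must succeed, so \eqref{g} is not proved. Two smaller inaccuracies: the commutation needed for $n\sqrt{-1}u_{0i,}^{\ \ i}=n\sqrt{-1}(u_{i,}^{\ \ i})_{,0}$ uses only the vanishing of the torsion, so no curvature enters \eqref{g} (curvature enters \eqref{d} through \eqref{Dij}); and your alternative route via a second divergence of \eqref{Dij} is only a sketch and cannot substitute for the computation.
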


\begin{proof}
  \eqref{d} and \eqref{e} can be checked directly by \eqref{Dij} and \eqref{Eij}. By Lemma \ref{prep2},
  \begin{align*}
    &\operatorname{Im}G_{i,}^{\ \ i}\\
    =&n\operatorname{Im}\sqrt{-1}(\Delta u)_{,0}-\frac{n(n+1)}{n+2}\alpha\left(\operatorname{Im}\frac{\sqrt{-1}u_{0\overline i}u^{\overline i}}{u}+\frac{u_0\Delta u}{u}-\frac{u_0|\nabla u|^2}{u^2}\right)\\
    &-\frac{n\alpha}{n+2}\frac{u_0\Delta u}{u}+\frac{n\alpha}{n+2}\left(\frac{n+1}{n+2}\alpha-1\right)\left(\operatorname{Im}\frac{D_{\overline i}u^{\overline i}}{u}+(n+1)\frac{u_0|\nabla u|^2}{u^2}\right)+n(\alpha-1)\lambda u_0\\
    =&\frac{n\alpha}{n+2}\left(\frac{n+1}{n+2}\alpha-1\right)\operatorname{Im}\frac{D_{\overline i}u^{\overline i}}{u}-\frac{n(n+1)}{n+2}\alpha\operatorname{Im}\frac{\sqrt{-1}u_{0\overline i}u^{\overline i}}{u}+\frac{n(n+1)^2}{(n+2)^2}\alpha^2\frac{u_0|\nabla u|^2}{u^2}\\
    =&\operatorname{Im}\left[\frac{n\alpha}{n+2}\left(\frac{n+1}{n+2}\alpha-1\right)\frac{D_{\overline i}u^{\overline i}}{u}+\frac{n+1}{n+2}\alpha\frac{G_{\overline i}u^{\overline i}}{u}\right].
  \end{align*}
\end{proof}

\textbf{By invariance argument above, we need an identity including $\displaystyle\sum_{i,j}|D_{ij}|^2$, $\displaystyle\sum_{i,j}|E_{i\overline j}|^2$ and $\displaystyle\sum_i|G_i|^2$. Because of $\displaystyle\sum_i|G_i|^2$, $\{(0,0),2,4,+\}$ type identity is not enough. Hence consider the following $\{(0,0),2,6,+\}$ type identity, which is crucial for the proof of Theorem \ref{CR}.}
\begin{proposition}
  Let $\{d_l\}_{l=1}^4$, $\{e_l\}_{l=1}^4$, $\mu$ and $\beta$ be undetermined constants, then
  \begin{align}
    \begin{split}\label{2}
      &u^{-\beta}\operatorname{Re}\Big\{u^\beta\Big[\left(d_1\frac{|\nabla u|^2}{u}+d_2 u^\alpha+d_3\lambda u+d_4n\sqrt{-1}u_0\right)D_i\\
      &+\left(e_1\frac{|\nabla u|^2}{u}+e_2 u^\alpha+e_3\lambda u+e_4n\sqrt{-1}u_0\right)E_i-\mu n\sqrt{-1}u_0G_i\Big]\Big\}_,^{\ i}\\
      =&\left[d_1\frac{|\nabla u|^2}{u^2}+d_2u^{\alpha-1}+d_3\lambda\right]\left[\sum_{i,j}|D_{ij}|^2+2\alpha(1-\frac{n\alpha}{n+2})\frac{|\nabla u|^4}{u^2}+\mathscr R\right]\\
      &+\left[e_1\frac{|\nabla u|^2}{u^2}+e_2u^{\alpha-1}+e_3\lambda\right]\sum_{i,j}|E_{i\overline j}|^2+d_1\sum_i|D_i|^2+e_1\sum_i|E_i|^2\\
     &+\mu\sum_i|G_i|^2+(d_1+e_1)\operatorname{Re}D_iE^i-d_4\operatorname{Re}D_iG^i-e_4\operatorname{Re}E_iG^i\\
     &+\operatorname{Re}\left[\Delta_1\frac{|\nabla u|^2}{u^2}+\Delta_2u^{\alpha-1}+\Delta_3\lambda+\Delta_4\frac{n\sqrt{-1}u_0}{u}\right]D_iu^i\\
     &+\left[\Theta_1\frac{|\nabla u|^2}{u^2}+\Theta_2u^{\alpha-1}+\Theta_3\lambda\right]E_iu^i\\
     &+\operatorname{Re}\left[\Xi_1\frac{|\nabla u|^2}{u^2}+\Xi_2u^{\alpha-1}+\Xi_3\lambda+\Xi_4\frac{n\sqrt{-1}u_0}{u}\right]G_iu^i.
    \end{split}
  \end{align}
  The coefficients are:
  $$\Delta_1=\left(\beta+\frac{3(n+1)}{n+2}\alpha-2\right)d_1-\frac{n-1}{n+2}\alpha e_1+\frac{n\alpha}{n+2}(\frac{n+1}{n+2}\alpha-1)d_4,$$
  $$\Delta_2=-\frac 1 nd_1+(\beta+2\alpha-1)d_2-\frac{n-1}{n+2}\alpha e_2+\frac{\alpha}{n+2}d_4,$$
  $$\Delta_3=\frac 1 n d_1+(\beta+\alpha)d_3-\frac{n-1}{n+2}\alpha e_3+(\frac{n+1}{n+2}\alpha-1)d_4,$$
  $$\Delta_4=\frac 1 nd_1+\left(\beta+\frac{2n+3}{n+2}\alpha-1\right)d_4+\frac{n-1}{n+2}\alpha e_4+\frac{n\alpha}{n+2}(\frac{n+1}{n+2}\alpha-1)\mu,$$
  $$\Theta_1=-\alpha d_1+\left(\beta+\frac{3n+2}{n+2}\alpha-2\right)e_1+\frac{n\alpha}{n+2}(\frac{n+1}{n+2}\alpha-1)e_4,$$
  $$\Theta_2=-\frac 1 ne_1-\alpha d_2+
  \left(\beta+\frac{2n+3}{n+2}\alpha-1\right)e_2+\frac{\alpha}{n+2}e_4,$$
  $$\Theta_3=\frac 1 ne_1-\alpha d_3+\left(\beta+\frac{n+1}{n+2}\alpha\right) e_3+(\frac{n+1}{n+2}\alpha-1)e_4,$$
  $$\Xi_1=\frac{n+2}{n}d_1-\frac{n-1}{n}e_1-\frac{n\alpha}{n+2}(\frac{n+1}{n+2}\alpha-1)\mu,$$
  $$\Xi_2=\frac{n+2}{n}d_2-\frac{n-1}{n}e_2-\frac{\alpha}{n+2}\mu,$$
  $$\Xi_3=\frac{n+2}{n}d_3-\frac{n-1}{n}e_3-(\frac{n+1}{n+2}\alpha-1)\mu,$$
  $$\Xi_4=\frac{n+2}{n}d_4+\frac{n-1}{n}e_4-\beta\mu.$$
\end{proposition}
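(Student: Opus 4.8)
The plan is to verify the identity \eqref{2} by a direct but carefully organized computation, exploiting the Leibniz rule together with the three fundamental identities \eqref{d}, \eqref{e}, \eqref{g} established in Lemma \ref{invariance}. First I would expand the left-hand side using the product rule for the divergence: for each of the three bracketed vector fields of the form $P\,D_i$, $Q\,E_i$, $-\mu n\sqrt{-1}u_0 G_i$ (with $P,Q$ the indicated linear combinations of $\tfrac{|\nabla u|^2}{u}$, $u^\alpha$, $\lambda u$, $n\sqrt{-1}u_0$), the weighted divergence $u^{-\beta}(u^\beta P D_i)_{,}^{\ i}$ splits into $P\,D_{i,}^{\ \ i}$ plus $D_i\cdot(u^{-\beta}(u^\beta P)_{,}^{\ i})$. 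The first group of terms, after substituting \eqref{d}, \eqref{e}, \eqref{g}, produces exactly the quadratic forms $\sum|D_{ij}|^2$, $\sum|E_{i\overline j}|^2$, $\sum|G_i|^2$ (each multiplied by the scalar coefficient in front of the corresponding $D_i^{\ i}$, $E_i^{\ i}$, $\operatorname{Im}G_i^{\ i}$), the cross terms $\operatorname{Re}D_iE^i$, $\operatorname{Re}D_iG^i$, $\operatorname{Re}E_iG^i$, the $\mathscr R$ term, the $\tfrac{|\nabla u|^4}{u^2}$ term, plus a batch of terms of the schematic shape (scalar)$\cdot\tfrac{D_{\overline i}u^{\overline i}}{u}$, (scalar)$\cdot\tfrac{E_{\overline i}u^{\overline i}}{u}$, (scalar)$\cdot\tfrac{G_{\overline i}u^{\overline i}}{u}$.

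The second group — the terms where the derivative lands on the scalar coefficient $P$ or $Q$ or on $n\sqrt{-1}u_0$ — is where Lemma \ref{prep2} enters. Using $(\Delta u)_{,i}$, $(|\nabla u|^2)_{,\overline i}$ and the formula for $n\sqrt{-1}u_{0\overline i}$ from Lemma \ref{prep2}, together with $(u^\alpha)_{,i}=\alpha u^{\alpha-1}u_i$ and $(\lambda u)_{,i}=\lambda u_i$ and the trivial weight derivative $u^{-\beta}(u^\beta)_{,}^{\ i}=\beta\tfrac{u^{\overline i}}{u}$, each such derivative is rewritten so that every contribution is of the form (scalar in $\tfrac{|\nabla u|^2}{u^2}$, $u^{\alpha-1}$, $\lambda$, $\tfrac{n\sqrt{-1}u_0}{u}$)$\cdot\tfrac{D_iu^i}{u}$ and likewise for $E$ and $G$ — because Lemma \ref{prep2} expresses $(\Delta u)_{,i}$, $(|\nabla u|^2)_{,\overline i}$, $n\sqrt{-1}u_{0\overline i}$ precisely in terms of $D_{\overline i}$, $E_{\overline i}$, $G_{\overline i}$ and multiples of $u_{\overline i}$. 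Contracting against $D^{\overline i}$, $E^{\overline i}$ (here one uses $E_iu^i\in\mathbb R$ from Lemma \ref{prep1}(1) and $\operatorname{Im}G_i^{\ i}$ in \eqref{g} to handle the reality of the $E$- and $G$-blocks correctly), and $G^{\overline i}$, one collects all such contributions. The coefficients $\Delta_1,\dots,\Delta_4$, $\Theta_1,\Theta_2,\Theta_3$, $\Xi_1,\dots,\Xi_4$ are then, by construction, nothing but the total coefficients of $\operatorname{Re}\big[\tfrac{|\nabla u|^2}{u^2}\cdots\big]D_iu^i$, $\big[\cdots\big]E_iu^i$, $\operatorname{Re}\big[\cdots\big]G_iu^i$ after this bookkeeping; so the proof is really a matter of reading off these sums and checking they match the displayed expressions.

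Concretely I would organize the bookkeeping in a table indexed by the monomials $\{\tfrac{|\nabla u|^2}{u^2},\,u^{\alpha-1},\,\lambda,\,\tfrac{n\sqrt{-1}u_0}{u}\}\times\{D_iu^i/u,\,E_iu^i/u,\,G_iu^i/u\}$, and track the four sources of each entry: (i) the $-\alpha\tfrac{E_iu^i}{u}+\tfrac{n+2}{n}\tfrac{G_iu^i}{u}$ etc. tail of \eqref{d}, the $-\tfrac{n-1}{n+2}\alpha\tfrac{D_{\overline i}u^{\overline i}}{u}+\cdots$ tail of \eqref{e}, and the tail of \eqref{g}, each multiplied by the appropriate outer scalar $P$ or $Q$; (ii) the $\beta$-weight derivative; (iii) the derivative of $P,Q$ via $(u^\alpha)_{,i}$, $(\lambda u)_{,i}$, $(|\nabla u|^2)_{,i}$; and (iv) the derivative of the explicit $n\sqrt{-1}u_0$ factor (only in the $d_4,e_4,\mu$ terms) via the third line of Lemma \ref{prep2}. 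Summing column by column gives $\Delta_\bullet,\Theta_\bullet,\Xi_\bullet$.

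The main obstacle is purely the volume and delicacy of this computation: keeping the real-part operators consistent (the $D$- and $G$-blocks are genuinely complex, while the $E$-block is real by Lemma \ref{prep1}), correctly distributing $\operatorname{Re}$ over products when one factor is $n\sqrt{-1}u_0$ (which is imaginary), and not losing the $2\sqrt{-1}h_{i\overline j}u_0$ commutator terms when passing between $u_{i\overline j}$ and $u_{\overline j i}$ inside $(|\nabla u|^2)_{,i}$. There is no conceptual difficulty — dimensional conservation (the $\{(0,0),2,6,+\}$ typing) guarantees a priori that nothing but the listed monomials can appear — but the algebra must be done with care. I would double-check the final coefficient list by specializing to the critical case $\alpha=\tfrac{n+2}{n}$, where $2\alpha(1-\tfrac{n\alpha}{n+2})=0$ and several $\Xi$'s and $\Delta$'s collapse, and comparing against the known Jerison--Lee / Wang identities.
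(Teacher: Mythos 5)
Your proposal is correct and follows essentially the same route as the paper: the paper's proof also expands the weighted divergence term by term, computing the nine building-block divergences $u^{-\beta}\operatorname{Re}(u^{\beta}P\,D_i)_{,}^{\ i}$, $u^{-\beta}\operatorname{Re}(u^{\beta}Q\,E_i)_{,}^{\ i}$, $u^{-\beta}\operatorname{Re}(-n\sqrt{-1}u^{\beta}u_0G_i)_{,}^{\ i}$ via Lemma \ref{prep2} and Lemma \ref{invariance}, and then reads off $\Delta_\bullet,\Theta_\bullet,\Xi_\bullet$ from the linear combination exactly as in your bookkeeping scheme.
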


\begin{proof}
  By Lemma \ref{prep2} and Lemma \ref{invariance}, we yield the following $\{(0,0),2,6,+\}$ type identity:
  \begin{align*}
    &u^{-\beta}\operatorname{Re}(u^{\beta-1}|\nabla u|^2D_i)_,^{\ i}\\
    =&\frac{|\nabla u|^2}{u^2}\sum_{i,j}\left[|D_{ij}|^2+2\alpha(1-\frac{n\alpha}{n+2})\frac{|\nabla u|^4}{u^2}+\mathscr R\right]+\sum_i|D_i|^2+\operatorname{Re}D_iE^i\\
    &+\operatorname{Re}\left[\left(\beta+\frac{3(n+1)}{n+2}\alpha-2\right)\frac{|\nabla u|^2}{u^2}-\frac 1 n u^{\alpha-1}+\frac 1 n\lambda+\frac 1 n\frac{n\sqrt{-1}u_0}{u}\right]D_iu^i\\
    &-\alpha\frac{|\nabla u|^2}{u^2}E_iu^i+\frac{n+2}{n}\frac{|\nabla u|^2}{u^2}\operatorname{Re}G_iu^i,
  \end{align*}
  \begin{align*}
    &u^{-\beta}\operatorname{Re}(u^{\beta+\alpha}D_i)_,^{\ i}\\
    =&u^{\alpha-1}\sum_{i,j}\left[|D_{ij}|^2+2\alpha(1-\frac{n\alpha}{n+2})\frac{|\nabla u|^4}{u^2}+\mathscr R\right]\\
    &+(\beta+2\alpha-1)u^{\alpha-1}\operatorname{Re}D_iu^i-\alpha u^{\alpha-1}E_iu^i+\frac{n+2}{n}u^{\alpha-1}\operatorname{Re}G_iu^i,
  \end{align*}
  \begin{align*}
    &u^{-\beta}\operatorname{Re}(u^{\beta+1}\cdot\lambda D_i)_,^{\ i}\\
    =&\lambda\sum_{i,j}\left[|D_{ij}|^2+2\alpha(1-\frac{n\alpha}{n+2})\frac{|\nabla u|^4}{u^2}+\mathscr R\right]\\
    &+(\beta+\alpha)\lambda\operatorname{Re}D_iu^i-\alpha\lambda E_iu^i+\frac{n+2}{n}\lambda\operatorname{Re}G_iu^i,
  \end{align*}
  \begin{align*}
    &u^{-\beta}\operatorname{Re}(u^\beta\cdot n\sqrt{-1}u_0D_i)_,^{\ i}\\
    =&-\operatorname{Re}D_iG^i+\left(\beta+\frac{2n+3}{n+2}\alpha-1\right)\operatorname{Re}\frac{n\sqrt{-1}u_0}{u}D_iu^i+\frac{n+2}{n}\operatorname{Re}\frac{n\sqrt{-1}u_0}{u}G_iu^i\\
    &+\left[\frac{n\alpha}{n+2}\left(\frac{n+1}{n+2}\alpha-1\right)\frac{|\nabla u|^2}{u^2}+\frac{\alpha}{n+2}u^{\alpha-1}+\left(\frac{n+1}{n+2}\alpha-1\right)\lambda\right]\operatorname{Re}D_iu^i,
  \end{align*}
  \begin{align*}
    &u^{-\beta}\operatorname{Re}(u^{\beta-1}|\nabla u|^2E_i)_,^{\ i}\\
    =&\frac{|\nabla u|^2}{u^2}\sum_{i,j}|E_{i\overline j}|^2+\sum_i|E_i|^2+\operatorname{Re}D_iE^i-\frac{n-1}{n+2}\alpha\frac{|\nabla u|^2}{u^2}\operatorname{Re}D_iu^i\\
    &+\left[\left(\beta+\frac{3n+2}{n+2}\alpha-2\right)\frac{|\nabla u|^2}{u^2}-\frac 1 nu^{\alpha-1}+\frac 1 n\lambda\right]E_iu^i-\frac{n-1}{n}\frac{|\nabla u|^2}{u^2}\operatorname{Re}G_iu^i,
  \end{align*}
  \begin{align*}
    &u^{-\beta}\operatorname{Re}(u^{\beta+\alpha}E_i)_,^{\ i}\\
    =&u^{\alpha-1}\sum_{i,j}|E_{i\overline j}|^2-\frac{n-1}{n+2}\alpha u^{\alpha-1}\operatorname{Re}D_iu^i\\
    &+\left(\beta+\frac{2n+3}{n+2}\alpha-1\right)u^{\alpha-1}E_iu^i-\frac{n-1}{n}u^{\alpha-1}\operatorname{Re}G_iu^i,
  \end{align*}
  \begin{align*}
    &u^{-\beta}\operatorname{Re}(u^{\beta+1}\cdot\lambda E_i)_,^{\ i}\\
    =&\lambda\sum_{i,j}|E_{i\overline j}|^2-\frac{n-1}{n+2}\alpha\lambda\operatorname{Re}D_iu^i+\left(\beta+\frac{n+1}{n+2}\alpha\right)\lambda E_iu^i-\frac{n-1}{n}\lambda\operatorname{Re}G_iu^i.
  \end{align*}
  \begin{align*}
    &u^{-\beta}\operatorname{Re}(u^\beta\cdot n\sqrt{-1} u_0E_i),^{\ i}\\
    =&-\operatorname{Re}E_iG^i+\frac{n-1}{n+2}\alpha\operatorname{Re}\frac{n\sqrt{-1}u_0}{u}D_iu^i+\frac{n-1}{n}\operatorname{Re}\frac{n\sqrt{-1}u_0}{u}G_iu^i\\
    &+\left[\frac{n\alpha}{n+2}\left(\frac{n+1}{n+2}\alpha-1\right)\frac{|\nabla u|^2}{u^2}+\frac{\alpha}{n+2}u^{\alpha-1}+\left(\frac{n+1}{n+2}\alpha-1\right)\lambda\right]E_iu^i,
  \end{align*}
  \begin{align*}
    &u^{-\beta}\operatorname{Re}(-n\sqrt{-1}u^\beta u_0G_i)_,^{\ i}\\
    =&\sum_i|G_i|^2+\frac{n\alpha}{n+2}\left(\frac{n+1}{n+2}\alpha-1\right)\operatorname{Re}\frac{n\sqrt{-1}u_0}{u}D_iu^i-\beta \operatorname{Re}\frac{n\sqrt{-1}u_0}{u}G_iu^i\\
    &-\left[\frac{n\alpha}{n+2}\left(\frac{n+1}{n+2}\alpha-1\right)\frac{|\nabla u|^2}{u^2}+\frac{\alpha}{n+2}u^{\alpha-1}+\left(\frac{n+1}{n+2}\alpha-1\right)\lambda\right]\operatorname{Re}G_iu^i.
  \end{align*}
  Then identity \eqref{2} can be proved by linearly combining them.
\end{proof}

When $n=1$, because $E_{1\overline 1}=0$, we need some other vector fields, whose divergence is composed of invariant tensors as well. The following identity satisfies our demand.
\begin{proposition}
  If $n=1$, let $\beta$ be an undetermined constant, then
  \begin{align}
    \begin{split}\label{3}
      &u^{-\beta}\operatorname{Re}\Big\{u^\beta\Big[\left(\frac{\alpha}{3}(\frac 1 2-\frac \alpha 3)\frac{|\nabla u|^2}{u^2}-\frac\alpha 6u^{\alpha-1}+(\frac1 2-\frac \alpha 3)\lambda\right)\frac{|\nabla u|^2}{u}\\
      &+\left(\frac 1 2(\beta+\frac 4 3\alpha-1)\frac{|\nabla u|^2}{u^2}-u^{\alpha-1}+\lambda-\frac{\sqrt{-1}u_0}{u}\right)\sqrt{-1}u_0\Big]u_1\Big\}_,^{\ 1}\\
      =&\operatorname{Re}\left[\frac\alpha 3(1-\frac 2 3\alpha)\frac{|\nabla u|^2}{u^2}-\frac\alpha 6u^{\alpha-1}+(\frac 1 2-\frac\alpha 3)\lambda-\frac 1 2(\beta+\frac 4 3\alpha-1)\frac{\sqrt{-1}u_0}{u}\right]D_1u^1\\
      &+\operatorname{Re}\left[-\frac 1 2(\beta+\frac 4 3\alpha-1)\frac{|\nabla u|^2}{u^2}+u^{\alpha-1}-\lambda-2\frac{\sqrt{-1}u_0}{u}\right]G_1u^1\\
      &-\frac\alpha 3(1-\frac\alpha 3)(1-\frac 2 3\alpha)\frac{|\nabla u|^6}{u^4}.
    \end{split}
  \end{align}
\end{proposition}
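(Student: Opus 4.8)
The plan is to establish $(\ref{3})$ by the same mechanism as the preceding Proposition: expand the divergence of the single $(1,0)$-vector field on the left by the Leibniz rule, rewrite every derivative produced using the structural identities of Lemma $\ref{prep2}$ and Lemma $\ref{invariance}$ specialised to $n=1$, and collect terms. Three features of $n=1$ streamline the computation. First, $E_{i\overline j}$ is trace-free with a single component, so $E_{i\overline j}\equiv 0$; hence $E_i\equiv 0$ and $L_{i\overline j}\equiv 0$, which is why no $E_iu^i$ term appears on the right. Second, the torsion vanishes by $(\ref{condition})$, so $A_{ij}=0$ and the commutation formulae carry no torsion terms. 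Third, only one covariant derivative of the vector field is taken, so only second derivatives of $u$ ever occur, no commutator of third derivatives is needed, and consequently no curvature term enters $(\ref{3})$ — in contrast with $(\ref{d})$, where $\mathscr R$ appears precisely because $D_i$ itself is differentiated. Throughout, $\Delta u=\lambda u-u^\alpha$ is used to eliminate $\Delta u$, so that in particular $u_{1,}{}^{1}=(\lambda-u^{\alpha-1})u+\sqrt{-1}u_0$.

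Write the left-hand side of $(\ref{3})$ as $u^{-\beta}\operatorname{Re}\{u^\beta\phi\,u_1\}_{,}{}^{1}$, where $\phi$ is the linear combination, read off from the statement, of the seven scalars
$$\frac{|\nabla u|^4}{u^3},\quad u^{\alpha-1}\frac{|\nabla u|^2}{u},\quad \lambda\frac{|\nabla u|^2}{u},\quad \frac{|\nabla u|^2}{u^2}\sqrt{-1}u_0,\quad u^{\alpha-1}\sqrt{-1}u_0,\quad \lambda\sqrt{-1}u_0,\quad \frac{u_0^2}{u}.$$
By linearity it suffices to compute $u^{-\beta}\operatorname{Re}\{u^\beta\psi\,u_1\}_{,}{}^{1}$ for each such scalar $\psi$ and then take the prescribed combination. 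Each of these is elementary: $\{u^\beta\psi u_1\}_{,}{}^{1}=\beta u^{\beta-1}\psi|\nabla u|^2+u^\beta\psi_{,}{}^{1}u_1+u^\beta\psi\,u_{1,}{}^{1}$, and $\psi_{,}{}^{1}u_1=\psi_{,\overline i}u^{\overline i}$ is evaluated from the three formulas of Lemma $\ref{prep2}$ — the trivial $(u^\gamma)_{,\overline i}=\gamma u^{\gamma-1}u_{\overline i}$, the expansion of $(|\nabla u|^2)_{,\overline i}$, and the expansion of $\sqrt{-1}u_{0\overline i}$. Contracting the $u\,D_{\overline i}$ term of $(|\nabla u|^2)_{,\overline i}$ with $u^{\overline i}$ yields $\overline{D_1u^1}$, and contracting the $-G_{\overline i}$ term of $\sqrt{-1}u_{0\overline i}$ with $u^{\overline i}$ yields $\overline{G_1u^1}$; these are the only sources of the $D_1u^1$- and $G_1u^1$-terms on the right. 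Everything else reduces, after removing $\Delta u$ by the equation, to real coefficients times products of $\frac{|\nabla u|^2}{u}$, $u^{\alpha-1}$, $\lambda$, $\frac{\sqrt{-1}u_0}{u}$ of total order four.

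It then remains to take $\operatorname{Re}$ and add. One checks that the $D_1u^1$- and $G_1u^1$-contributions assemble into the two bracketed coefficients on the right of $(\ref{3})$, and that all remaining monomials cancel except for the single residual term $-\frac{\alpha}{3}\left(1-\frac{\alpha}{3}\right)\left(1-\frac{2}{3}\alpha\right)\frac{|\nabla u|^6}{u^4}$. This final verification is the main obstacle: it is a finite but lengthy algebraic computation, entirely parallel to the proof of the preceding Proposition, in which the delicate point is that $D_1$ and $G_1$ have non-trivial imaginary parts (entering through $u_{01}$) while $\sqrt{-1}u_0$ is purely imaginary, so that the $\operatorname{Re}$ in $(\ref{3})$ is doing real work — the displayed coefficients are exactly those for which every purely imaginary contribution, in particular the terms proportional to $\operatorname{Im}(D_1u^1)$ or $\operatorname{Im}(G_1u^1)$ and the isolated $\sqrt{-1}u_0$ terms, cancels after taking the real part, leaving a genuine identity whose last term is sign-definite for $1<\alpha<\tfrac32$. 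The exponent $\beta$ is left free, as it enters only linearly and does not affect this cancellation.
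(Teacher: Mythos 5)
Your proposal is correct and is essentially the paper's own proof: the paper likewise reduces the left side by linearity to the same seven elementary divergences (of $u^{\beta-3}|\nabla u|^4u_1$, $u^{\beta+\alpha-2}|\nabla u|^2u_1$, $u^{\beta-1}\lambda|\nabla u|^2u_1$, $u^{\beta-2}|\nabla u|^2\sqrt{-1}u_0u_1$, $u^{\beta+\alpha-1}\sqrt{-1}u_0u_1$, $u^{\beta}\lambda\sqrt{-1}u_0u_1$, $u^{\beta-1}u_0^2u_1$), evaluates each via Lemma \ref{prep2} and equation (\ref{equ}) (using $E_{1\overline 1}=0$), and combines them with the coefficients $\frac{\alpha}{3}(\frac12-\frac\alpha3)$, $-\frac\alpha6$, $\frac12-\frac\alpha3$, $\frac12(\beta+\frac43\alpha-1)$, $-1$, $1$, $1$ read off from the statement. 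The only cosmetic differences are that the paper writes out the seven intermediate identities explicitly so the final cancellation is a mechanical sum, whereas you defer that bookkeeping, and that Lemma \ref{invariance} is not actually needed here since only scalar multiples of $u_1$ are differentiated.
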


\begin{proof}
  Notice that $E_{1\overline 1}=0$ in the case $n=1$. By Lemma \ref{prep2}, we yield the following $\{(0,0),2,6,+\}$ type identities:
  \begin{equation}\label{A1}
    u^{-\beta}\operatorname{Re}(u^{\beta-3}|\nabla u|^4u_1),^{\ 1}=2\frac{|\nabla u|^2}{u^2}\operatorname{Re}D_1u^1+(\beta+2\alpha-3)\frac{|\nabla u|^6}{u^4}+3\Delta u\frac{|\nabla u|^4}{u^3},
  \end{equation}
  $$u^{-\beta}\operatorname{Re}(u^{\beta+\alpha-2}|\nabla u|^2u_1),^{\ 1}=u^{\alpha-1}\operatorname{Re}D_1u^1+(\beta+2\alpha-2)u^{\alpha-3}|\nabla u|^4+2u^{\alpha-2}\Delta u|\nabla u|^2,$$
  $$u^{-\beta}\operatorname{Re}(u^{\beta-1}\lambda |\nabla u|^2u_1),^{\ 1}=\lambda\operatorname{Re}D_1u^1+(\beta+\alpha-1)\lambda\frac{|\nabla u|^4}{u^2}+2\lambda\Delta u\frac{|\nabla u|^2}{u},$$
  \begin{align}
    \begin{split}\label{A2}
      &u^{-\beta}\operatorname{Re}(u^{\beta-2}|\nabla u|^2\sqrt{-1}u_0u_1),^{\ 1}\\
      =&-\operatorname{Re}\frac{\sqrt{-1}u_0}{u}D_1u^1-\frac{|\nabla u|^2}{u^2}\operatorname{Re}G_1u^1-\frac\alpha 3(1-\frac 2 3\alpha)\frac{|\nabla u|^6}{u^4}\\
      &-\frac\alpha 3\Delta u\frac{|\nabla u|^4}{u^3}+(\alpha-1)\lambda\frac{|\nabla u|^4}{u^2}-2\frac{|\nabla u|^2}{u^2}u_0^2,    
    \end{split}
  \end{align}
  \begin{align*}
    &u^{-\beta}\operatorname{Re}(u^{\beta+\alpha-1}\sqrt{-1}u_0u_1),^{\ 1}\\
    =&-u^{\alpha-1}\operatorname{Re}G_1u^1-\frac\alpha 3(1-\frac 2 3\alpha)u^{\alpha-3}|\nabla u|^4\\
    &-\frac\alpha 3u^{\alpha-2}\Delta u|\nabla u|^2+(\alpha-1)\lambda u^{\alpha-1}|\nabla u|^2-u^{\alpha-1}u_0^2,
  \end{align*}
  \begin{align*}
    &u^{-\beta}\operatorname{Re}(u^\beta\lambda\sqrt{-1}u_0u_1),^{\ 1}\\
    =&-\lambda\operatorname{Re}G_1u^1-\frac\alpha 3(1-\frac 2 3\alpha)\lambda\frac{|\nabla u|^4}{u^2}-\frac\alpha 3\lambda\Delta u\frac{|\nabla u|^2}{u}+(\alpha-1)\lambda^2|\nabla u|^2-\lambda u_0^2,
  \end{align*}
  \begin{align*}
    &u^{-\beta}\operatorname{Re}(u^{\beta-1}u_0^2u_1),^{\ 1}\\
    =&-2\operatorname{Re}\frac{\sqrt{-1}u_0}{u}G_1u^1+(\beta+\frac 4 3\alpha-1)\frac{|\nabla u|^2}{u^2}u_0^2+\frac{\Delta u}{u}u_0^2.
  \end{align*}
  Use equation \eqref{equ}, and linearly combine seven identities together with coefficients
  $$\left\{\frac{\alpha}{3}(\frac 1 2-\frac \alpha 3),-\frac\alpha 6,\frac1 2-\frac \alpha 3,\frac 1 2(\beta+\frac 4 3\alpha-1),-1,1,1\right\},$$
  then identity \eqref{3} can be proved.
\end{proof}

The following identity is also useful, which provides a positive $u_0^2$ term.
\begin{proposition}
  If $n=1$, let $\beta$ be an undetermined constant, then
  \begin{align}
    \begin{split}\label{4}
      &u^{-\beta}\operatorname{Re}\left\{u^\beta\left[\frac 1 3(\frac 2 3\alpha-1)\frac{|\nabla u|^4}{u^3}-\frac{\sqrt{-1}u_0}{u^2}|\nabla u|^2\right]u_1\right\}_,^{\ 1}\\
      =&\frac 2 3(\frac 2 3\alpha-1)\frac{|\nabla u|^2}{u^2}\operatorname{Re}D_1u^1+\operatorname{Re}\frac{\sqrt{-1}u_0}{u}D_1u^1+\frac{|\nabla u|^2}{u^2}\operatorname{Re}G_1u^1\\
      &+(\frac{\beta+\alpha}{3}-1)(\frac 2 3\alpha-1)\frac{|\nabla u|^6}{u^4}-(\alpha-1)u^{\alpha-3}|\nabla u|^4+2\frac{|\nabla u|^2}{u^2}u_0^2.
    \end{split}
  \end{align}
\end{proposition}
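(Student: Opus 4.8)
The plan is to read off $(\ref{4})$ as a real linear combination of the two auxiliary divergence identities $(\ref{A1})$ and $(\ref{A2})$, which were already derived (for $n=1$) in the proof of the proposition giving $(\ref{3})$. Since $u^{\beta}\cdot\frac{|\nabla u|^{4}}{u^{3}}u_{1}=u^{\beta-3}|\nabla u|^{4}u_{1}$ and $u^{\beta}\cdot\frac{\sqrt{-1}u_{0}}{u^{2}}|\nabla u|^{2}u_{1}=u^{\beta-2}\sqrt{-1}u_{0}|\nabla u|^{2}u_{1}$, and since the scalar weights $\frac{1}{3}(\frac{2}{3}\alpha-1)$ and $-1$ are real so that $\operatorname{Re}$ commutes with them, the left-hand side of $(\ref{4})$ is exactly $\frac{1}{3}(\frac{2}{3}\alpha-1)$ times the left-hand side of $(\ref{A1})$ minus the left-hand side of $(\ref{A2})$. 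Hence it suffices to form the same combination of the right-hand sides and simplify. (For completeness one may re-derive $(\ref{A1})$ and $(\ref{A2})$ from scratch: differentiate the vector fields by the Leibniz rule and convert each occurrence of $(\Delta u)_{,i}$, $(|\nabla u|^{2})_{,\overline i}$ and $n\sqrt{-1}u_{0\overline i}$ into $D_{ij}$, $G_{i}$, $\frac{|\nabla u|^{2}}{u^{2}}u_{i}$ and curvature-free lower-order terms via Lemma $\ref{prep2}$, using $E_{1\overline 1}=0$ when $n=1$; in particular the $G_{1}$ that appears in $(\ref{A2})$ comes precisely from the third formula of Lemma $\ref{prep2}$.)

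Carrying out the combination, the $\operatorname{Re}D_{1}u^{1}$ terms assemble into $\frac{2}{3}(\frac{2}{3}\alpha-1)\frac{|\nabla u|^{2}}{u^{2}}\operatorname{Re}D_{1}u^{1}+\operatorname{Re}\frac{\sqrt{-1}u_{0}}{u}D_{1}u^{1}$, the $\operatorname{Re}G_{1}u^{1}$ terms give $\frac{|\nabla u|^{2}}{u^{2}}\operatorname{Re}G_{1}u^{1}$, the $u_{0}^{2}$ terms give $2\frac{|\nabla u|^{2}}{u^{2}}u_{0}^{2}$, and the $\frac{|\nabla u|^{6}}{u^{4}}$ terms collapse (after the routine algebra $\frac{1}{3}(\frac{2}{3}\alpha-1)(\beta+2\alpha-3)+\frac{\alpha}{3}(1-\frac{2}{3}\alpha)=(\frac{\beta+\alpha}{3}-1)(\frac{2}{3}\alpha-1)$) to exactly the coefficient appearing in $(\ref{4})$. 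The only terms not yet of the stated form are the $\Delta u\,\frac{|\nabla u|^{4}}{u^{3}}$ contributions: weighting the $3\Delta u\frac{|\nabla u|^{4}}{u^{3}}$ of $(\ref{A1})$ by $\frac{1}{3}(\frac{2}{3}\alpha-1)$ and the $-\frac{\alpha}{3}\Delta u\frac{|\nabla u|^{4}}{u^{3}}$ of $(\ref{A2})$ by $-1$ yields $(\frac{2}{3}\alpha-1+\frac{\alpha}{3})\Delta u\frac{|\nabla u|^{4}}{u^{3}}=(\alpha-1)\Delta u\frac{|\nabla u|^{4}}{u^{3}}$.

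The decisive step is then to invoke equation $(\ref{equ})$ in the form $\Delta u=\lambda u-u^{\alpha}$, turning $(\alpha-1)\Delta u\frac{|\nabla u|^{4}}{u^{3}}$ into $(\alpha-1)\lambda\frac{|\nabla u|^{4}}{u^{2}}-(\alpha-1)u^{\alpha-3}|\nabla u|^{4}$: the first summand cancels against the $-(\alpha-1)\lambda\frac{|\nabla u|^{4}}{u^{2}}$ produced by $-(\ref{A2})$, while the second is precisely the term $-(\alpha-1)u^{\alpha-3}|\nabla u|^{4}$ displayed in $(\ref{4})$; collecting everything finishes the proof. There is no structural difficulty here --- the work is entirely bookkeeping --- and the only place demanding care is this last cancellation, i.e.\ checking that after substituting $\Delta u=\lambda u-u^{\alpha}$ the two $\lambda\frac{|\nabla u|^{4}}{u^{2}}$ contributions annihilate each other and that no stray $\lambda$- or $u^{\alpha}$-term survives beyond the advertised $-(\alpha-1)u^{\alpha-3}|\nabla u|^{4}$. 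If instead one chooses to re-prove $(\ref{A1})$ and $(\ref{A2})$ directly, the (still mild) effort migrates to verifying those two identities, where Lemma $\ref{prep2}$ together with $E_{1\overline 1}=0$ does all the bookkeeping.
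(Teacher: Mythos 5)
Your proposal is correct and is exactly the paper's argument: the paper proves $(\ref{4})$ by observing it is $\tfrac13(\tfrac23\alpha-1)\times(\ref{A1})-(\ref{A2})$, and your verification (including the coefficient algebra for the $\frac{|\nabla u|^6}{u^4}$ term and the final substitution $\Delta u=\lambda u-u^\alpha$ that cancels the $\lambda\frac{|\nabla u|^4}{u^2}$ contributions) just makes explicit the bookkeeping the paper leaves implicit.
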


\begin{proof}
  It's just $\displaystyle\frac 1 3(\frac 2 3\alpha-1)\times\eqref{A1}-\eqref{A2}$.
\end{proof}

In conclusion, inspired by the Riemannian case, the $\{(0,0,),2,6,+\}$ type identity \eqref{2} is found by the method of dimensional conservation and invariant tensors. As a supplement, \eqref{3} and \eqref{4} are proposed to deal with special case $n=1$.
\section{Proof of Theorem \ref{CR}}

In this section, we'll prove Theorem \ref{CR} by discussing four different cases. $\{(0,0),2,4,+\}$ type identity should be considered first, which is Case 1. The identity $\{(0,0),2,6,+\}$ type \eqref{2} deals with the "near-critical" exponent in Case 2. For $n=1$, Case 3 and Case 4 are finished with the help of \eqref{3} and \eqref{4} separately.

\vspace*{1em}
\noindent\textbf{Case 1.} $1<\alpha<\displaystyle\frac{n+2}{n+\displaystyle\frac{1}{2n}}$ for $n\geqslant2$.

By \eqref{d}, \eqref{e}, and use Lemma \ref{prep1} to write as square terms:
\begin{align}
  \begin{split}\label{case1}
    &\operatorname{Re}[(n-1)D_{ij}u^j+(n+2)E_{i\overline j}u^{\overline j}]_,^{\ i}\\
    =&(n+2)\sum_{i,j}|E_{i\overline j}|^2+(n-1)\sum_{i,j}|D_{ij}|^2\\
    &+2\alpha E_iu^i+2(n-1)\alpha(1-\frac{n\alpha}{n+2})\frac{|\nabla u|^4}{u^2}+(n-1)\mathscr R\\
    =&(n+2)\sum_{i,j}\left|E_{i\overline j}+\frac{\alpha}{n+2}L_{i\overline j}\right|^2\\
    &+(n-1)\left[\sum_{i,j}|D_{ij}|^2+\alpha\left(2-\frac{2n^2+1}{n(n+2)}\alpha\right)\frac{|\nabla u|^4}{u^2}+\mathscr R\right].
  \end{split}
\end{align}

If $1<\alpha<\displaystyle\frac{n+2}{n+\displaystyle\frac{1}{2n}}$, the RHS of identity \eqref{case1} is non-negative with positive $\displaystyle\frac{|\nabla u|^4}{u^2}$ term, and the LHS of identity \eqref{case1} is composed of divergence of vector fields, which are vanished while integrating over $M$. Hence $|\nabla u|^4=0$, i,e, $u$ is constant by integrating over $M$ on both sides of \eqref{case1}.

\begin{remark}
  The method discussed in Case 1 is similar to the one used by Xu in \cite{MR2568158}. Though the $\{(0,0),2,4,+\}$ type identity \eqref{case1} is simple, it's trivial when $n=1$ because of $E_{1\overline 1}=0$. Hence the case $n=1$ can't be taken into consideration in Case 1.
\end{remark}

\vspace*{1em}
\noindent\textbf{Case 2.} $\displaystyle\frac{n+2}{n+\displaystyle\frac{1}{2n}}\leqslant\alpha<\frac{n+2}{n}$ for $n\in\mathbb N^*$.

Consider the subcritical exponent case $\displaystyle\frac{n+2}{n+\displaystyle\frac{1}{2n}}\leqslant\alpha<\frac{n+2}{n}$ first. In identity \eqref{2}, take
$$d_1=e_1=\frac{n^2\alpha[3n+6-(n-1)\alpha]}{(2n+1)(n+2)^2},~~d_2=e_2=\frac{n\alpha}{n+2},~~d_3=e_3=n(\frac{n+1}{n+2}\alpha-1),$$
$$d_4=\frac{n}{2n+1}(3-\frac{7n+2}{n+2}\alpha),~~e_4=\frac{n(3+\alpha)}{2n+1},~~\mu=3,~~\beta=1-\alpha.$$
Rewrite all coefficients with the parameters above:
$$\Delta_1=\frac{2n^2\alpha[(4n+5)\alpha-3n-6]}{(2n+1)(n+2)^2}(1-\frac{n\alpha}{n+2}),~~\Theta_1=-\frac{6n^2\alpha(\alpha+n+2)}{(2n+1)(n+2)^2}(1-\frac{n\alpha}{n+2}),$$
$$\Xi_1=\frac{6n\alpha}{2n+1}(1-\frac{n\alpha}{n+2}),~~\Delta_3=\Theta_3=\frac{2n(\alpha-1)(2+n-n\alpha)}{2n+1},$$
$$\Delta_2=\Theta_2=\Xi_2=\Xi_3=\Delta_4=\Xi_4=0.$$

Notice that $\displaystyle\sum_{i,j,k}|D_{ij}u_{\overline k}+E_{i\overline k}u_j|^2=|\nabla u|^2\sum_{i,j}(|D_{ij}|^2+|E_{i\overline j}|^2)+2u^2\operatorname{Re}D_iE^i$. Use Lemma \ref{prep1} to rewrite identity \eqref{2} as square terms:

\begin{align}
  \begin{split}\label{case2}
   &u^{-\beta}\operatorname{Re}\Big\{u^\beta\Big[\left(d_1\frac{|\nabla u|^2}{u}+d_2 u^\alpha+d_3\lambda u\right)(D_i+E_i)\\
   &+n\sqrt{-1}u_0(d_4D_i+e_4E_i-3 G_i)\Big]\Big\}_,^{\ i}\\
   =&d_1u^{-2}\sum_{i,j,k}|D_{ij}u_{\overline k}+E_{i\overline k}u_j|^2+d_2u^{\alpha-1}\left[\sum_{i,j}(|D_{ij}|^2+|E_{i\overline j}|^2)+2\alpha(1-\frac{n\alpha}{n+2})\frac{|\nabla u|^4}{u^2}\right]\\
   &+d_3\lambda\Bigg[\sum_{i,j}\left(\left|D_{ij}+\frac{\Delta_3}{2d_3}\frac{u_iu_j}{u}\right|^2+\left|E_{i\overline j}+\frac{\Delta_3}{2d_3}L_{i\overline j}\right|^2\right)\\
   &+\left(2\alpha(1-\frac{n\alpha}{n+2})-\frac{2n-1}{n}\frac{\Delta_3^2}{4d_3^2}\right)\frac{|\nabla u|^4}{u^2}\Bigg]+\left[d_1\frac{|\nabla u|^2}{u^2}+d_2u^{\alpha-1}+d_3\lambda\right]\mathscr R+\mathbf Q_1,
  \end{split}
\end{align}
where $\mathbf Q_1$ can be written as a quadratic form:
\begin{align*}
  \mathbf Q_1=&d_1\sum_i|D_i|^2+d_1\sum_i|E_i|^2+3\sum_i|G_i|^2-d_4\operatorname{Re}D_iG^i-e_4\operatorname{Re}E_iG^i\\
  &+\Delta_1\frac{|\nabla u|^2}{u^2}\operatorname{Re}D_iu^i+\Theta_1\frac{|\nabla u|^2}{u^2}E_iu^i+\Xi_1\frac{|\nabla u|^2}{u^2}\operatorname{Re}G_iu^i+2d_1\alpha(1-\frac{n\alpha}{n+2})\frac{|\nabla u|^6}{u^4}.
\end{align*}

If $\displaystyle\alpha\in(\frac{n+2}{n+1},\frac{n+2}{n})$, $d_1=\displaystyle\frac{n\alpha}{n+2}\cdot\frac{n[3n+6-(n-1)\alpha]}{(2n+1)(n+2)}\geqslant\frac{n\alpha}{n+2}>0$, $d_2=\displaystyle\frac{n\alpha}{n+2}>0$, $d_3=\displaystyle n(\frac{n+1}{n+2}\alpha-1)>0$, which are correct for $\alpha\in[\displaystyle\frac{n+2}{n+\displaystyle\frac{1}{2n}},\frac{n+2}{n})$ of course.

Check the positivity of $\displaystyle\lambda\frac{|\nabla u|^4}{u^2}$ term:
$$d_3\left(2\alpha(1-\frac{n\alpha}{n+2})-\frac{2n-1}{n}\frac{\Delta_3^2}{4d_3^2}\right)=\frac{n}{{(2n+1)^2}d_3}(1-\frac{n\alpha}{n+2})f_1(\alpha),$$
where $f_1(\alpha)$ is a polynomial of $\alpha$:
\begin{align*}
  f_1(\alpha)=&\frac{n(10n^4+35n^3+44n^2+16n-6)}{(n+2)^2}\alpha^3-\frac{22n^4+57n^3+46n^2-8}{n+2}\alpha^2\\
  &+(14n^3+25n^2+8n-8)\alpha-(n+2)^2(2n-1).
\end{align*}
Study the monotonicity of $f_1$:
\begin{align*}
  f_1'(\alpha)=&3n(10n^4+35n^3+44n^2+16n-6)\left(\frac{\alpha}{n+2}\right)^2\\
  &-2(22n^4+57n^3+46n^2-8)\left(\frac{\alpha}{n+2}\right)+(14n^3+25n^2+8n-8).
\end{align*}
Compare the symmetry axis of $f_1'((n+2)x)$ with $\displaystyle\left(n+\frac {1}{2n}\right)^{-1}$, the minimum of $\displaystyle\frac{\alpha}{n+2}$:
\begin{align*}
  &\frac{22n^4+57n^3+46n^2-8}{3n(10n^4+35n^3+44n^2+16n-6)}\Big/\left(n+\frac{1}{2n}\right)^{-1}-1\\
  =&-\frac{16n^6+96n^5+150n^4+39n^3-66n^2+8}{6n^2(10n^4+35n^3+44n^2+16n-6)}<0,
\end{align*}
then $f_1'(\alpha)\geqslant\displaystyle f_1'(\frac{n+2}{n+\displaystyle\frac{1}{2n}})=\frac{(2n-1)(32n^5+96n^4+64n^3-41n^2-24n+8)}{(2n^2+1)^2}>0$, hence

$$f_1(\alpha)\geqslant f_1(\frac{n+2}{n+\displaystyle\frac{1}{2n}})=\frac{(n+2)(2n-1)(32n^5+16n^4-24n^3-28n^2+15n-2)}{(2n^2+1)^3}>0,$$
from which we have $2\displaystyle\alpha(1-\frac{n\alpha}{n+2})>\frac{2n-1}{n}\frac{\Delta_3^2}{4d_3^2}\geqslant0$, $\forall \alpha\in[\displaystyle\frac{n+2}{n+\displaystyle\frac{1}{2n}},\frac{n+2}{n})$.

Check the positivity of the quadratic form $\mathbf Q_1$, which corresponds to a matrix as
\begin{equation}\label{mat1}
  \begin{pmatrix}
   d_1 & 0 & \displaystyle-\frac{d_4}{2} & \displaystyle\frac{\Delta_1}{2}\\[10pt]
    0 & d_1 & \displaystyle-\frac{e_4}{2} & \displaystyle\frac{\Theta_1}{2}\\[10pt]
    \displaystyle-\frac{d_4}{2} & \displaystyle-\frac{e_4}{2} & 3 & \displaystyle\frac{\Xi_1}{2}\\[10pt]
   \displaystyle\frac{\Delta_1}{2} & \displaystyle\frac{\Theta_1}{2} & \displaystyle\frac{\Xi_1}{2} & \displaystyle2d_1\alpha(1-\frac{n\alpha}{n+2})
  \end{pmatrix}.
\end{equation}
Compute principal minor sequence of matrix \eqref{mat1}:
$$\begin{vmatrix}
  d_1 & 0 & \displaystyle-\frac{d_4}{2}\\[10pt]
  0 & d_1 & \displaystyle-\frac{e_4}{2}\\[10pt]
  \displaystyle-\frac{d_4}{2} & \displaystyle-\frac{e_4}{2} & 3
\end{vmatrix}=\frac{n^4\alpha(3-\displaystyle\frac{n-1}{n+2}\alpha)}{2(n+2)(2n+1)^3}f_2(\alpha),$$
where $3-\displaystyle\frac{n-1}{n+2}\alpha>3-\frac{n-1}{n+2}\cdot\frac{n+2}{n}=\frac{2n+1}{n}>0$,
\begin{align*}
  f_2(\alpha)=&-(37n^2+10n-2)\left(\frac{\alpha}{n+2}\right)^2+18(3n+1)\left(\frac{\alpha}{n+2}\right)-9\\
  \geqslant&\min\left\{f_2(\frac{n+2}{n+\displaystyle\frac{1}{2n}}),f_2(\frac{n+2}{n})\right\}\\
  =&\min\left\{\frac{32n^4+32n^3+80n^2+36n-9}{(2n^2+1)^2},\frac{2(2n+1)^2}{n^2}\right\}>0.
\end{align*}
The only left term we need to check is the determinant of $\mathbf Q_1$:
$$\det\eqref{mat1}=\frac{n^6\alpha^3(3-\displaystyle\frac{n-1}{n+2}\alpha)^2}{(n+2)^3(2n+1)^4}(1-\frac{n\alpha}{n+2})f_3(\alpha),$$
hence we only need to prove that $f_3(\alpha)$ is positive:
\begin{align*}
  f_3(\alpha)=&-2(2n-1)(11n^2+14n+2)\left(\frac{\alpha}{n+2}\right)^2+(79n^2+58n-2)\left(\frac{\alpha}{n+2}\right)-27n\\
  \geqslant&\min\left\{f_3(\frac{n+2}{n+\displaystyle\frac{1}{2n}}),f_3(\frac{n+2}{n})\right\}\\
  =&\min\left\{\frac{n(32n^4+96n^3+122n^2+132n-31)}{(2n^2+1)^2},\frac{2(n+2)(2n+1)^2}{n^2}\right\}>0.
\end{align*}

In conclusion, the matrix \eqref{mat1} is strictly positive definite. Then the RHS of identity \eqref{case2} are non-negative with some positive $\displaystyle\frac{|\nabla u|^6}{u^4}$ terms left while $\displaystyle\alpha\in[\frac{n+2}{n+\displaystyle\frac{1}{2n}},\frac{n+2}{n})$, hence $|\nabla u|^6=0$ by multiplying $u^\beta$ on both sides of \eqref{case2} and integrating over $M$, then $u$ is constant.

\begin{remark}
  For the critical exponent case $\alpha=\displaystyle\frac{n+2}{n}$, the parameters discussed above are
  $$d_k=e_k=1,~k=1,2,3,~~d_4=-2,~~e_4=2,~~\beta=-\frac 2 n,$$
  $$\Delta_l=\Theta_l=\Xi_l=0,~~l=1,2,3,4.$$
  Then the identity \eqref{case2} becomes Jerison-Lee type identity:
  \begin{align}
    \begin{split}\label{JL}
      &u^{\frac 2 n}\operatorname{Re}\left\{u^{-\frac 2 n}\left[\left(\frac{|\nabla u|^2}{u}+u^{\frac{n+2}{n}}+\lambda u\right)(D_i+E_i)-n\sqrt{-1}u_0(2D_i-2E_i+3G_i)\right]\right\}_,^{\ i}\\
      =&u^{-2}\sum_{i,j,k}|D_{ij}u_{\overline k}+E_{i\overline k}u_j|^2+\frac{|\nabla u|^2}{u^2}\mathscr R+(u^{\frac 2 n}+\lambda)\sum_{i,j}(|D_{ij}|^2+|E_{i\overline j}|^2+\mathscr R)\\
      &+\sum_i(|G_i+D_i|^2+|G_i-E_i|^2+|G_i|^2).
    \end{split}
  \end{align}
  The rigidity and the existence of non-trivial solutions can be deduced by discussing the positivity of $\mathscr R$. We recommend readers to see \cite{MR0924699} and \cite{MR3342188} for more details.
\end{remark}

\vspace*{1em}
\noindent\textbf{Case 3.} $1.06\leqslant\alpha<3$ for $n=1$.

It's a pity that the method in Case 1 failed when $n=1$. Besides, Case 2 can cover $2\leqslant\alpha<3$ with $n=1$ only. In this section, the identity \eqref{3} will be used to cover those difficulties. 

Notice that $E_{1\overline 1}=0$, $|\nabla u|^2|D_{11}|^2=u^2|D_1|^2$. Rewrite \eqref{2} while $n=1$:

\begin{align}
  \begin{split}\label{1dim}
    &u^{-\beta}\operatorname{Re}\left\{u^\beta\left[\left(d_1\frac{|\nabla u|^2}{u}+d_2 u^\alpha+d_3\lambda u\right)D_1+\sqrt{-1}u_0(d_4D_1-\mu G_1)\right]\right\}_,^{\ 1}\\
    =&\left(2d_1\frac{|\nabla u|^2}{u^2}+d_2u^{\alpha-1}+d_3\lambda\right)|D_{11}|^2+\left(d_1\frac{|\nabla u|^2}{u^2}+d_2u^{\alpha-1}+d_3\lambda\right)\\
    &\times\left[2\alpha(1-\frac\alpha 3)\frac{|\nabla u|^4}{u^2}+\mathscr R\right]+\mu|G_1|^2-d_4\operatorname{Re}D_1G^1\\
    &+\operatorname{Re}\left[\Delta_1\frac{|\nabla u|^2}{u^2}+\Delta_2u^{\alpha-1}+\Delta_3\lambda+\Delta_4\frac{\sqrt{-1}u_0}{u}\right]D_1u^1\\
    &+\operatorname{Re}\left[\Xi_1\frac{|\nabla u|^2}{u^2}+\Xi_2u^{\alpha-1}+\Xi_3\lambda+\Xi_4\frac{\sqrt{-1}u_0}{u}\right]G_1u^1.
  \end{split}
\end{align}
The coefficients are:
$$\Delta_1=(\beta+2\alpha-2)d_1+\frac \alpha 3\left(\frac 2 3\alpha-1\right)d_4,~~\Delta_2=-d_1+\left(\beta+2\alpha-1\right)d_2+\frac \alpha 3d_4,$$
$$\Delta_3=d_1+\left(\beta+\alpha\right)d_3+\left(\frac 2 3\alpha-1\right)d_4,~~\Delta_4=d_1+\left(\beta+\frac 5 3\alpha-1\right)d_4+\frac \alpha 3\left(\frac 2 3\alpha-1\right)\mu,$$
$$\Xi_1=3d_1-\frac \alpha 3\left(\frac 2 3\alpha-1\right)\mu,~~\Xi_2=3d_2-\frac \alpha 3\mu,~~\Xi_3=3d_3-\left(\frac 2 3\alpha-1\right)\mu,~~\Xi_4=3d_4-\beta\mu.$$

Take $d_1=\displaystyle\frac{\alpha}{36}(5\alpha-3)$, $d_2=d_3=\displaystyle\frac{\alpha-1}{2}$, $d_4=2-\displaystyle\frac 4 3\alpha$, $\mu=3$, $\beta=1-\alpha$. Rewrite all coefficients with the parameters above:
$$\Delta_1=\frac{\alpha}{108}(3-\alpha)(17\alpha-21),~~\Delta_2=\frac{\alpha}{12}(3-\alpha),~~\Delta_3=\frac{1}{12}(3-\alpha)(9\alpha-10),$$
$$\Delta_4=\frac{\alpha}{12}(3-\alpha),~~\Xi_1=\frac{\alpha}{4}(3-\alpha),~~\Xi_2=\frac{\alpha-3}{2},~~\Xi_3=\frac{3-\alpha}{2},~~\Xi_4=3-\alpha.$$

Consider $\eqref{1dim}+\displaystyle\frac{3-\alpha}{2}\times\eqref{3}$:
\begin{align}
  \begin{split}\label{case3}
    &u^{\alpha-1}\operatorname{Re}\Big\{u^{1-\alpha}\Big[\left(\frac{\alpha}{36}(5\alpha-3)\frac{|\nabla u|^2}{u}+\frac{\alpha-1}{2}(u^\alpha+\lambda u)\right)D_1\\
    &+\sqrt{-1}u_0\left((2-\frac 4 3\alpha)D_1-3 G_1\right)\\
    &+\frac{3-\alpha}{2}\left(\frac{\alpha}{3}(\frac 1 2-\frac \alpha 3)\frac{|\nabla u|^2}{u^2}-\frac\alpha 6u^{\alpha-1}+(\frac1 2-\frac \alpha 3)\lambda\right)\frac{|\nabla u|^2}{u}u_1\\
    &+\frac{3-\alpha}{2}\left(\frac \alpha 6\frac{|\nabla u|^2}{u^2}-u^{\alpha-1}+\lambda-\frac{\sqrt{-1}u_0}{u}\right)\sqrt{-1}u_0u_1\Big]\Big\}_,^{\ 1}\\
    =&\left(\frac{\alpha}{18}(5\alpha-3)\frac{|\nabla u|^2}{u^2}+\frac{\alpha-1}{2}(u^{\alpha-1}+\lambda)\right)|D_{11}|^2+3|G_1|^2+(\frac 4 3\alpha-2)\operatorname{Re}D_1G^1\\
    &+\frac{\alpha}{6}(\alpha-1)(\alpha+3)(1-\frac\alpha 3)\frac{|\nabla u|^6}{u^4}+\frac{\alpha}{36}(5\alpha-3)\frac{|\nabla u|^2}{u^2}\mathscr R\\
    &+\frac{\alpha-1}{2}(u^{\alpha-1}+\lambda)\left[2\alpha(1-\frac\alpha 3)\frac{|\nabla u|^4}{u^2}+\mathscr R\right]\\
    &+(3-\alpha)\operatorname{Re}\left[\frac{\alpha}{108}(5\alpha-3)\frac{|\nabla u|^2}{u^2}D_1u^1+\frac{7}{12}(\alpha-1)\lambda D_1u^1+\frac{\alpha}{6}\frac{|\nabla u|^2}{u^2}G_1u^1\right]\\
    =&\frac{\alpha-1}{2}\lambda\left[\left|D_{11}+\frac{7}{12}(3-\alpha)\frac{u_1u_1}{u}\right|^2+\frac{1}{144}(3-\alpha)(145\alpha-147)\frac{|\nabla u|^4}{u^2}+\mathscr R\right]\\
    &+\frac{\alpha-1}{2}u^{\alpha-1}\left[|D_{11}|^2+2\alpha(1-\frac\alpha 3)\frac{|\nabla u|^4}{u^2}+\mathscr R\right]+\frac{\alpha}{36}(5\alpha-3)\frac{|\nabla u|^2}{u^2}\mathscr R+\mathbf Q_2,
  \end{split}
\end{align}
where $\mathbf Q_2$ can be written as a quadratic form:
\begin{align*}
  \mathbf Q_2=&\frac{\alpha}{18}(5\alpha-3)|D_1|^2+3|G_1|^2+(\frac 4 3\alpha-2)\operatorname{Re}D_1G^1+(3-\alpha)\frac{|\nabla u|^2}{u^2}\\
  &\times\left[\frac{\alpha}{108}(5\alpha-3)\operatorname{Re}D_1u^1+\frac\alpha 6\operatorname{Re}G_1u^1+\frac{\alpha}{18}(\alpha-1)(\alpha+3)\frac{|\nabla u|^4}{u^2}\right].
\end{align*}

While $\alpha\in[1.06,3)$, $\alpha-1>0$, $(3-\alpha)(145\alpha-147)>0$, Hence we only need to check the positivity of quadratic form $\mathbf Q_2$, which corresponds to a matrix as
\begin{equation}\label{mat2}
  \begin{pmatrix}
    \displaystyle\frac{\alpha}{18}(5\alpha-3) & \displaystyle\frac 2 3\alpha-1 & \displaystyle\frac{\alpha}{216}(3-\alpha)(5\alpha-3)\\[10pt]
    \displaystyle\frac 2 3\alpha-1 & 3 & \displaystyle\frac{\alpha}{12}(3-\alpha)\\[10pt]
    \displaystyle\frac{\alpha}{216}(3-\alpha)(5\alpha-3) & \displaystyle\frac{\alpha}{12}(3-\alpha) & \displaystyle\frac{\alpha}{18}(3-\alpha)(\alpha-1)(\alpha+3)
  \end{pmatrix}.
\end{equation}
Compute principal minor sequence of matrix \eqref{mat2}:
$$\frac{\alpha}{18}(5\alpha-3)>\frac{\alpha}{9}>0,$$
$$\begin{vmatrix}
  \displaystyle\frac{\alpha}{18}(5\alpha-3) & \displaystyle\frac 2 3\alpha-1\\[10pt]
  \displaystyle\frac 2 3\alpha-1 & 3
\end{vmatrix}=\frac{1}{18}(\alpha+3)(7\alpha-6)>0,$$
$$\det\eqref{mat2}=\frac{\alpha}{5184}(3-\alpha)(3+\alpha)f_4(\alpha),$$
where $f_4(\alpha)=117\alpha^3+110\alpha^2-519\alpha+288$. Study the monotonicity of $f_4$:
$$f_4'(\alpha)=351\alpha^2+220\alpha-519>351+220-519=52>0,$$
then $f_4(\alpha)\geqslant f_4(1.06)=0.804872>0$, hence the matrix \eqref{mat2} is strictly positive definite.

In conclusion, the RHS of identity \eqref{case3} are non-negative with some positive $\displaystyle u^{\alpha-3}|\nabla u|^4$ terms left while $\alpha\in[1.06,2)$. Then $|\nabla u|^4=0$, i.e. $u$ is a constant, by multiplying $u^\beta$ on both sides of \eqref{case3} and integrating over $M$.

\begin{remark}
  In the critical exponent case $\alpha=3$, the identity \eqref{case3} becomes \eqref{JL} in the case $n=1$ again. Besides, the lower bound of 1.06 can be decreased to 1.052327, the approximation of
  $$\frac{2}{351}\left\{\sqrt{194269}\cos\left[\frac 1 3\left(\arccos\frac{84611717}{194269^{\frac 3 2}}-\pi\right)\right]-55\right\},$$
  which is the biggest root of $f_4$. However, we'll use a new identity in Case 4, hence it's meaningless to compute such finely.
\end{remark}

\vspace*{1em}
\noindent\textbf{Case 4.} $1<\alpha\leqslant1.06$ for $n=1$.

Take $d_1=\displaystyle\frac{1}{18}$, $d_2=d_3=\displaystyle\frac{\alpha-1}{2}$, $d_4=\displaystyle\frac 2 3$, $\mu=3$, $\beta=\displaystyle\frac 1 2$ in identity \eqref{1dim}. Rewrite all coefficients in \eqref{1dim} with parameters above:
$$\Delta_1=\displaystyle\frac{1}{108}(16\alpha^2-12\alpha-9),~~\Delta_2=\frac{1}{36}(4\alpha-1)(9\alpha-7),~~\Delta_3=\frac{1}{36}(18\alpha^2+7\alpha-31),$$
$$\Delta_4=\frac{1}{18}(12\alpha^2+2\alpha-5),~~\Xi_1=-\frac{1}{6}(4\alpha^2-6\alpha-1),~~\Xi_2=\frac{\alpha-3}{2},~~\Xi_3=\frac{3-\alpha}{2},~~\Xi_4=\frac 1 2.$$

Consider $\eqref{1dim}+\displaystyle\frac{3-\alpha}{2}\times\eqref{3}+\frac{9}{40}\times\eqref{4}$ with $\beta=\displaystyle\frac 1 2$:
\begin{align}
  \begin{split}\label{case4}
    &u^{-\frac 1 2}\operatorname{Re}\Big\{u^{\frac 1 2}\Big[\left(\frac{1}{18}\frac{|\nabla u|^2}{u}+\frac{\alpha-1}{2}(u^\alpha+\lambda u)\right)D_1+\sqrt{-1}u_0\left(\frac 2 3D_1-3 G_1\right)\\
    &+\frac{3-\alpha}{2}\left[\left(-\frac\alpha 6u^{\alpha-1}+(\frac1 2-\frac \alpha 3)\lambda\right)\frac{|\nabla u|^2}{u}+\left(-u^{\alpha-1}+\lambda-\frac{\sqrt{-1}u_0}{u}\right)\sqrt{-1}u_0\right]u_1\\
    &+\left((2\alpha-3)(\frac{1}{36}\alpha^2-\frac{1}{12}\alpha+\frac{1}{40})\frac{|\nabla u|^2}{u}-(\frac 1 3\alpha^2-\frac 9 8\alpha+\frac 3 5)\sqrt{-1}u_0\right)\frac{|\nabla u|^2}{u^2}u_1\Big]\Big\}_,^{\ 1}\\
    =&\left[\frac 1 9\frac{|\nabla u|^2}{u^2}+\frac{\alpha-1}{2}(u^{\alpha-1}+\lambda)\right]|D_{11}|^2+3|G_1|^2-\frac 2 3\operatorname{Re}D_1G^1\\
    &+\frac{1}{2160}(80\alpha^4-600\alpha^3+1468\alpha^2-1272\alpha+405)\frac{|\nabla u|^6}{u^4}+\frac{1}{18}\frac{|\nabla u|^2}{u^2}\mathscr R\\
    &+\frac{\alpha-1}{2}u^{\alpha-1}\left[-\frac{1}{60}(40\alpha^2-120\alpha+27)\frac{|\nabla u|^4}{u^2}+\mathscr R\right]\\
    &+\frac{\alpha-1}{2}\lambda\left[2\alpha(1-\frac\alpha 3)\frac{|\nabla u|^4}{u^2}+\mathscr R\right]+\frac{9}{20}\frac{|\nabla u|^2}{u^2}u_0^2\\
    &+\frac{1}{270}(30\alpha^3-95\alpha^2+132\alpha-63)\frac{|\nabla u|^2}{u^2}\operatorname{Re}D_1u^1\\
    &+(\alpha-1)\left[\frac{1}{36}(39-7\alpha)u^{\alpha-1}+\frac 1 9(6\alpha+1)\lambda\right]\operatorname{Re}D_1u^1\\
    &+\frac{1}{360}(360\alpha^2-365\alpha+116)\operatorname{Re}\frac{\sqrt{-1}u_0}{u}D_1u^1\\
    &-\frac{1}{120}(40\alpha^2+15\alpha-92)\frac{|\nabla u|^2}{u^2}\operatorname{Re}G_1u^1+(\alpha-\frac 5 2)\operatorname{Re}\frac{\sqrt{-1}u_0}{u}G_1u^1\\
    =&\frac{\alpha-1}{2}u^{\alpha-1}\left[\left|D_{11}+\frac{1}{36}(39-7\alpha)\frac{u_1u_1}{u}\right|^2-\frac{4565\alpha^2-15690\alpha+10521}{6480}\frac{|\nabla u|^4}{u^2}\right]\\
    &+\frac{\alpha-1}{2}\lambda\left[\left|D_{11}+\frac1 9(6\alpha+1)\frac{u_1u_1}{u}\right|^2-\frac{90\alpha^2-150\alpha+1}{81}\frac{|\nabla u|^4}{u^2}\right]\\
    &+\left[\frac{1}{18}\frac{|\nabla u|^2}{u^2}+\frac{\alpha-1}{2}(u^{\alpha-1}+\lambda)\right]\mathscr R+\mathbf Q_3,
  \end{split}
\end{align}
where $\mathbf Q_3$ can be written as a quadratic form:
\begin{align*}
  \mathbf Q_3=&\frac 1 9|D_1|^2+3|G_1|^2-\frac 2 3\operatorname{Re}D_1G^1+\frac{9}{20}\frac{|\nabla u|^2}{u^2}u_0^2+\Delta_4'\operatorname{Re}\frac{\sqrt{-1}u_0}{u}D_1u^1\\
  &+\Xi_4'\operatorname{Re}\frac{\sqrt{-1}u_0}{u}G_1u^1+\Delta_1'\frac{|\nabla u|^2}{u^2}\operatorname{Re}D_1u^1+\Xi_1'\frac{|\nabla u|^2}{u^2}\operatorname{Re}G_1u^1+A\frac{|\nabla u|^6}{u^4}
\end{align*}
with the coefficients as
$$\Delta_1'=\frac{1}{270}(30\alpha^3-95\alpha^2+132\alpha-63),~~\Delta_4'=\frac{1}{360}(360\alpha^2-365\alpha+116),$$
$$\Xi_1'=-\frac{1}{120}(40\alpha^2+15\alpha-92),~~\Xi_4'=\alpha-\frac 5 2,$$
$$A=\frac{1}{2160}(80\alpha^4-600\alpha^3+1468\alpha^2-1272\alpha+405).$$

While $\alpha\in(1,1.06]$, check the positivity of $\displaystyle u^{\alpha-1}\frac{|\nabla u|^4}{u^2}$ term and $\displaystyle \lambda\frac{|\nabla u|^4}{u^2}$ term:
$$-(4565\alpha^2-15690\alpha+10521)\geqslant-(4565\times1-15690\times1+10521)=604>0,$$
$$-(90\alpha^2-150\alpha+1)\geqslant-(90\times1.06^2-150+1)=47.876>0,$$
Hence we only need to check the positivity of quadratic form $\mathbf Q_2$, which corresponds to a matrix as
\begin{equation}\label{mat3}
  \begin{pmatrix}
    \displaystyle\frac 1 9 & \displaystyle-\frac 1 3 & \displaystyle\frac{\Delta_4'}{2} & \displaystyle\frac{\Delta_1'}{2}\\[10pt]
    \displaystyle-\frac 1 3 & 3 & \displaystyle\frac{\Xi_4'}{2} & \displaystyle\frac{\Xi_1'}{2}\\[10pt]
    \displaystyle\frac{\Delta_4'}{2} & \displaystyle\frac{\Xi_4'}{2} & \displaystyle\frac{9}{20} & 0\\[10pt]
    \displaystyle\frac{\Delta_1'}{2} & \displaystyle\frac{\Xi_1'}{2} & 0 & A
  \end{pmatrix}.
\end{equation}
Compute principal minor sequence of matrix \eqref{mat3}:
$$\begin{vmatrix}
  \displaystyle\frac 1 9 & \displaystyle-\frac 1 3\\[10pt]
  \displaystyle-\frac 1 3 & 3
\end{vmatrix}=\frac 2 9>0,~~\begin{vmatrix}
  \displaystyle\frac 1 9 & \displaystyle-\frac 1 3 & \displaystyle\frac{\Delta_4'}{2}\\[10pt]
  \displaystyle-\frac 1 3 & 3 & \displaystyle\frac{\Xi_4'}{2}\\[10pt]
  \displaystyle\frac{\Delta_4'}{2} & \displaystyle\frac{\Xi_4'}{2} & \displaystyle\frac{9}{20}
\end{vmatrix}=\frac{f_5(\alpha)}{57600},~~\det\eqref{mat3}=\frac{f_6(\alpha)}{29859840000},$$
where $f_5(\alpha)=-43200\alpha^4+78000\alpha^3-40115\alpha^2+8800\alpha-992$,
\begin{align*}
  f_6(\alpha)=&-460800000\alpha^8+6320640000\alpha^7-25055552000\alpha^6+44595172000\alpha^5\\
  &-42848423575\alpha^4+24660626800\alpha^3-8756098960\alpha^2+1823449600\alpha-252801536.
\end{align*}

Study the concavity of $f_5$:
$$f_5''(\alpha)=10(-51840\alpha^2+46800\alpha-8023)<10(-51840+46800\times1.06-8023)=-102550<0,$$
then $f_5(\alpha)\geqslant\min\{f_5(1),f_5(1.06)\}=\min\{2493,1623.03\}>0$.

Similarly, check the positivity of $f_6$:
\begin{align*}
  f_6''(\alpha)=-20(&1290240000\alpha^6-13273344000\alpha^5+37583328000\alpha^4\\
  &-44595172000\alpha^3+25709054145\alpha^2-7398188040\alpha+875609896),
\end{align*}
\begin{align*}
  f_6^{(3)}(\alpha)=-600 (&258048000\alpha^5-2212224000\alpha^4+5011110400\alpha^3\\
  &-4459517200\alpha^2+1713936943\alpha-246606268),
\end{align*}
\begin{align*}
  f_6^{(4)}(\alpha)=-600(&1290240000\alpha^4-8848896000\alpha^3\\
  &+15033331200\alpha^2-8919034400\alpha+1713936943),
\end{align*}
$$f_6^{(5)}(\alpha)=-480000 (6451200\alpha^3-33183360\alpha^2+37583328\alpha-11148793),$$
$$f_6^{(6)}(\alpha)=46080000 (-201600\alpha^2+691320\alpha-391493)  \geqslant f_6^{(6)}(1)=46080000\times98227>0,$$
$$f_6^{(5)}(\alpha)\geqslant f_6^{(5)}(1)=480000 \times297625>0,$$
$$f_6^{(4)}(\alpha)\leqslant f_6^{(4)}(1.06)=-600\times240932969.8544<0,$$
$$f_6^{(3)}(\alpha)\leqslant f_6^{(3)}(1)=-600\times64747875<0,$$
$$f_6''(\alpha)\leqslant f_6''(1)=-20\times191528001<0,$$
then $f_6$ is concave while $\alpha\in(1,1.06]$, hence
$$f_6(\alpha)\geqslant\min\{f_6(1),f_6(1.06)\}=\min\{26212329,2.38\times10^7\}>0.$$

In conclusion, the matrix \eqref{mat3} is strictly positive definite, hence the RHS of identity \eqref{case4} are non-negative with some positive $\displaystyle u^{\alpha-3}|\nabla u|^4$ terms left while $\alpha\in(1,1.06]$. Then $|\nabla u|^4=0$, i.e. $u$ is a constant, by multiplying $u^\beta$ on both sides of \eqref{case4} and integrating over $M$.

\vspace*{1em}

Combine Case 1 $\sim$ Case 4, then Theorem \ref{CR} is proved. $\hfill\qed$
\section{Theorem \ref{JL2}: answer to the problem raised by Jerison-Lee}

In \cite{MR0924699}, Jerison-Lee found a three-dimensional family of differential identities with positive RHS for the Yamabe equation on Heisenberg group $\mathbb H^n$ by using the computer. However, they care about whether there exists a theoretical framework that would predict the existence and the structure of such formulae.

In this section, we unravel the mystery of the existence of these identities and prove that all useful identities of $\{(0,0),2,6,+\}$ type must be the three-dimensional family in \cite{MR0924699}. \textbf{From now on}, $(M^{2n+1},\theta)=\mathbb H^n$ is Heisenberg group, then $h_{i\overline j}=\delta_{i\overline j}$, $R_{i\overline j}=0$. All the couple indices, such as $i$ and $\overline i$, will be considered as summation indices taking part in the process of summing from 1 to $n$. We study the Yamabe equation:
\begin{equation}\label{Hequ}
  \Delta u+u^{\frac{n+2}{n}}=0~~\text{on}~~\mathbb H^n,
\end{equation}
which is identical with \eqref{equ} in the case $\lambda=0$ and $\alpha=\displaystyle\frac{n+2}{n}$. Then, Lemma \ref{invariance} becomes

$$D_{i,\overline i}=u^{-1}\sum_{i,j=1}^n|D_{ij}|^2+\frac 2 n\frac{D_iu_{\overline i}}{u}-\frac{n+2}{n}\frac{E_iu_{\overline i}}{u}+\frac{n+2}{n}\frac{G_iu_{\overline i}}{u},$$
$$E_{i,\overline i}=u^{-1}\sum_{i,j=1}^n|E_{i\overline j}|^2-\frac{n-1}{n}\frac{D_{\overline i}u_i}{u}+\frac 1 n\frac{E_{\overline i}u_i}{u}-\frac{n-1}{n}\frac{G_{\overline i}u_i}{u},$$
$$\operatorname{Im}G_{i,\overline i}=\operatorname{Im}\left[\frac 1 n\frac{D_{\overline i}u_i}{u}+\frac{n+1}{n}\frac{G_{\overline i}u_i}{u}\right].$$

In the critical exponent case, target identities are composed of divergence of some vector fields and positive quadratic form of invariant tensors only. Similar with discussion about $\displaystyle\sum_i|G_i|^2$ in Section 2, we need $\{(0,0),2,6,+\}$ type identity. Rewrite \eqref{2} first:
\begin{align}
  \begin{split}\label{2'}
    &u^{-\beta}\operatorname{Re}\Big\{u^\beta\Big[\left(d_1\frac{|\nabla u|^2}{u}+d_2 u^{\frac{n+2}{n}}+d_4n\sqrt{-1}u_0\right)D_i\\
    &+\left(e_1\frac{|\nabla u|^2}{u}+e_2 u^{\frac{n+2}{n}}+e_4n\sqrt{-1}u_0\right)E_i-\mu n\sqrt{-1}u_0G_i\Big]\Big\}_{,\overline i}\\
    =&\left[d_1\frac{|\nabla u|^2}{u^2}+d_2u^{\frac 2 n}\right]\sum_{i,j}|D_{ij}|^2+d_1\sum_i|D_i|^2+\left[e_1\frac{|\nabla u|^2}{u^2}+e_2u^{\frac 2 n}\right]\sum_{i,j}|E_{i\overline j}|^2\\
    &+e_1\sum_i|E_i|^2+\mu\sum_i|G_i|^2+(d_1+e_1)\operatorname{Re}D_iE_{\overline i}-d_4\operatorname{Re}D_iG_{\overline i}-e_4\operatorname{Re}E_iG_{\overline i}\\
    &+\operatorname{Re}\left[\Delta_1\frac{|\nabla u|^2}{u^2}+\Delta_2u^{\frac 2 n}+\Delta_4\frac{n\sqrt{-1}u_0}{u}\right]D_iu_{\overline i}+\left[\Theta_1\frac{|\nabla u|^2}{u^2}+\Theta_2u^{\frac 2 n}\right]E_iu_{\overline i}\\
    &+\operatorname{Re}\left[\Xi_1\frac{|\nabla u|^2}{u^2}+\Xi_2u^{\frac 2 n}+\Xi_4\frac{n\sqrt{-1}u_0}{u}\right]G_iu_{\overline i}.
  \end{split}
\end{align}
The coefficients are:
$$\Delta_1=(\beta+\frac{n+3}{n})d_1-\frac{n-1}{n}e_1+\frac 1 nd_4,$$
$$\Delta_2=-\frac 1 nd_1+(\beta+\frac{n+4}{n})d_2-\frac{n-1}{n}e_2+\frac 1 nd_4,$$
$$\Delta_4=\frac 1 nd_1+(\beta+\frac{n+3}{n})d_4+\frac{n-1}{n}e_4+\frac 1 n\mu,$$
$$\Theta_1=-\frac{n+2}{n}d_1+(\beta+\frac{n+2}{n})e_1+\frac 1 ne_4,$$
$$\Theta_2=-\frac 1 ne_1-\frac{n+2}{n}d_2+(\beta+\frac{n+3}{n})e_2+\frac 1 n e_4,$$
$$\Xi_1=\frac{n+2}{n}d_1-\frac{n-1}{n}e_1-\frac 1 n\mu,$$
$$\Xi_2=\frac{n+2}{n}d_2-\frac{n-1}{n}e_2-\frac 1 n\mu,$$
$$\Xi_4=\frac{n+2}{n}d_4+\frac{n-1}{n}e_4-\beta\mu.$$

Besides, notice that
\begin{align*}
    &\operatorname{Re}[u_{jk,\overline i}u_{\overline j}u_{\overline k}u_i-u_{j\overline k,\overline i}u_{\overline j}u_ku_i]\\
    =&\operatorname{Re}[u_{j\overline i,k}u_{\overline j}u_{\overline k}u_i+2\sqrt{-1}u_{j0}u_{\overline j}|\nabla u|^2-u_{j\overline k,\overline i}u_{\overline j}u_ku_i]\\
    =&\operatorname{Re}[u_{\overline ij,k}u_{\overline j}u_{\overline k}u_i+4\sqrt{-1}u_{0i}u_{\overline i}|\nabla u|^2-u_{j\overline k,\overline i}u_{\overline j}u_ku_i]\\
    =&4|\nabla u|^2\operatorname{Re}\sqrt{-1}u_{0i}u_{\overline i}
\end{align*}
then
\begin{align}
  \begin{split}\label{ijk}
    &\operatorname{Re}[u_{jk}u_{\overline j}u_{\overline k}u_i-u_{j\overline k}u_{\overline j}u_ku_i]_{,\overline i}\\
    =&\operatorname{Re}[u_{jk,\overline i}u_{\overline j}u_{\overline k}u_i-u_{j\overline k,\overline i}u_{\overline j}u_ku_i]+\operatorname{Re}u_{jk}u_{\overline j\overline i}u_{\overline k}u_i+\operatorname{Re}u_{jk}u_{\overline j}u_{\overline k\overline i}u_i\\
    &-\operatorname{Re}u_{j\overline k}u_{\overline j\overline i}u_ku_i-\operatorname{Re}u_{j\overline k}u_{\overline j}u_{k\overline i}u_i+u_{jk}u_{\overline j}u_{\overline k}u_{i\overline i}-\operatorname{Re}u_{j\overline k}u_{\overline j}u_ku_{i\overline i}\\
    =&4|\nabla u|^2\operatorname{Re}\sqrt{-1}u_{0i}u_{\overline i}+2\sum_j|u_{jk}u_{\overline k}|^2-\operatorname{Re}u_{j\overline k}u_ku_{\overline j\overline i}u_i-\sum_k|u_{j\overline k}u_{\overline j}|^2\\
    &-2\operatorname{Re}\sqrt{-1}u_0u_{i\overline j}u_{\overline i}u_j+\Delta u\operatorname{Re}u_{ij}u_{\overline i}u_{\overline j}+\operatorname{Re}n\sqrt{-1}u_0u_{ij}u_{\overline i}u_{\overline j}\\
    &-\Delta u\operatorname{Re}u_{i\overline j}u_{\overline i}u_j-n\operatorname{Re}\sqrt{-1}u_0u_{i\overline j}u_{\overline i}u_j\\
    =&2u^2\sum_i|D_i|^2-u^2\sum_i|E_i|^2-u^2\operatorname{Re}D_iE_{\overline i}+\frac 4 n|\nabla u|^2\operatorname{Re}G_iu_{\overline i}\\
    &+\operatorname{Re}\left[\frac{3(n+3)}{n}|\nabla u|^2+\frac{n-1}{n}u\Delta u+(n+1)\sqrt{-1}uu_0\right]D_iu_{\overline i}\\
    &-\left[3|\nabla u|^2+\frac{n+2}{n}u\Delta u\right]E_iu_{\overline i}+\frac{9n+5}{n^2}\frac{|\nabla u|^6}{u^2}+\frac{4}{n^2}\frac{|\nabla u|^4\Delta u}{u}\\
    &-\frac{n+1}{n^2}|\nabla u|^2(\Delta u)^2+(n+1)|\nabla u|^2u_0^2.
  \end{split}
\end{align}
Hence another $\{(0,0),2,6,+\}$ type identity is found:

\begin{proposition}
  Let $\beta$ be an undetermined constant, then
  \begin{align}
    \begin{split}\label{1'}
      &u^{-\beta}\operatorname{Re}[u^{\beta-1}(D_ju_{\overline j}-E_ju_{\overline j})u_i]_{,\overline i}\\
      =&2\sum_i|D_i|^2-\sum_i|E_i|^2-\operatorname{Re}D_iE_{\overline i}+\frac 3 n\frac{|\nabla u|^2}{u^2}\operatorname{Re}G_iu_{\overline i}\\
      &+\operatorname{Re}\left[(\beta+\frac{n+3}{n})\frac{|\nabla u|^2}{u^2}-u^{\frac 2 n}+\frac{n\sqrt{-1}u_0}{u}\right]D_iu_{\overline i}\\
      &-\left[(\beta+\frac{n+6}{n})\frac{|\nabla u|^2}{u^2}-\frac{n+1}{n}u^{\frac 2 n}\right]\operatorname{Re}E_iu_{\overline i},
    \end{split}
  \end{align}
\end{proposition}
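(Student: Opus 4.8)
The plan is to deduce identity (\ref{1'}) from the auxiliary identity (\ref{ijk}), which has just been established, by re-expressing the vector field on the left of (\ref{ijk}) through the invariant tensors $D_i,E_i,G_i$ and then absorbing the leftover scalar terms into the divergence.

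First I isolate the geometric content of the field on the left of (\ref{ijk}). At the critical exponent $\alpha=\frac{n+2}{n}$ one has $\frac{n\alpha}{n+2}=1$, and a direct computation from the definitions of $D_{jk}$, $E_{j\overline k}$, $D_j$, $E_j$ on $\mathbb H^n$ gives
$$uD_ju_{\overline j}=u_{jk}u_{\overline j}u_{\overline k}-\frac{n+2}{n}\frac{|\nabla u|^4}{u},\qquad uE_ju_{\overline j}=u_{j\overline k}u_{\overline j}u_k-\frac{|\nabla u|^4}{u}-\frac 1n\Big(\Delta u+n\sqrt{-1}u_0-\frac{|\nabla u|^2}{u}\Big)|\nabla u|^2 .$$
Subtracting, the field $u_{jk}u_{\overline j}u_{\overline k}u_i-u_{j\overline k}u_{\overline j}u_ku_i$ appearing on the left of (\ref{ijk}) equals $u(D_ju_{\overline j}-E_ju_{\overline j})u_i+Ru_i$, where $R=\frac 3n\frac{|\nabla u|^4}{u}-\frac 1n(\Delta u)|\nabla u|^2-\sqrt{-1}u_0|\nabla u|^2$ is a scalar built only from $u,\Delta u,u_0,|\nabla u|^2$.

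Next I insert the weight. Writing $W=D_ju_{\overline j}-E_ju_{\overline j}$ and applying the product rule together with $u_{i,\overline i}=\Delta u+n\sqrt{-1}u_0$ and $u_iu_{\overline i}=|\nabla u|^2$, one obtains
$$u^{-\beta}\operatorname{Re}\big[u^{\beta-1}Wu_i\big]_{,\overline i}=(\beta-2)\frac{|\nabla u|^2}{u^2}\operatorname{Re}W+\frac 1{u^2}\operatorname{Re}\big[(u_{jk}u_{\overline j}u_{\overline k}-u_{j\overline k}u_{\overline j}u_k)u_i\big]_{,\overline i}-\frac 1{u^2}\operatorname{Re}[Ru_i]_{,\overline i},$$
the middle term being $u^{-2}$ times the right-hand side of (\ref{ijk}), and $\operatorname{Re}W=\operatorname{Re}(D_iu_{\overline i})-E_iu_{\overline i}$ because $E_iu^i\in\mathbb R$ by Lemma \ref{prep1}. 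The remaining piece $\operatorname{Re}[Ru_i]_{,\overline i}$ is computed term by term, using Lemma \ref{prep2} — which expresses $(\Delta u)_{,\overline i}$, $(|\nabla u|^2)_{,\overline i}$ and $n\sqrt{-1}u_{0\overline i}$ back in terms of $D_{\overline i},E_{\overline i},G_{\overline i}$ and the same scalars — together with the equation in the form $\Delta u=-u^{(n+2)/n}$.

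Finally I collect everything. After substitution the right-hand side splits into: (i) the quadratic invariant-tensor terms $\sum_i|D_i|^2$, $\sum_i|E_i|^2$, $\operatorname{Re}D_iE_{\overline i}$ inherited from (\ref{ijk}); (ii) contributions of the form $(\text{scalar})\cdot D_iu_{\overline i}$, $(\text{scalar})\cdot E_iu_{\overline i}$, $(\text{scalar})\cdot G_iu_{\overline i}$; and (iii) purely scalar terms proportional to $\frac{|\nabla u|^6}{u^4}$, $\frac{|\nabla u|^4\Delta u}{u^3}$, $\frac{|\nabla u|^2(\Delta u)^2}{u^2}$ and $\frac{|\nabla u|^2u_0^2}{u^2}$. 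The crux is that all the scalar terms of type (iii) cancel among the three contributions above, while the coefficients in (ii) collapse to exactly those in (\ref{1'}), the parameter $\beta$ entering only through the term $(\beta-2)\frac{|\nabla u|^2}{u^2}\operatorname{Re}W$. I expect the only real obstacle to be this bookkeeping: in particular, the $\frac{|\nabla u|^2u_0^2}{u^2}$ term produced when $Z_{\overline i}$ acts on the factors $u_0$ and $u_i$ inside the $-\sqrt{-1}u_0|\nabla u|^2u_i$ part of $Ru_i$ (generating $n\sqrt{-1}u_0\cdot\sqrt{-1}u_0=-nu_0^2$ through $u_{i,\overline i}$, plus a further $u_0^2$ piece through Lemma \ref{prep2}) must exactly cancel against the $(n+1)|\nabla u|^2u_0^2$ term of (\ref{ijk}); the analogous comparison governs the $(\Delta u)^2$ term, and both rely on $\Delta u=-u^{(n+2)/n}$ and on $E_iu^i$ being real.
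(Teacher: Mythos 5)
Your proposal is correct and follows essentially the same route as the paper: the paper likewise writes $D_{jk}u_{\overline j}u_{\overline k}u_i-E_{j\overline k}u_{\overline j}u_ku_i$ as $[u_{jk}u_{\overline j}u_{\overline k}-u_{j\overline k}u_{\overline j}u_k]u_i$ plus exactly your scalar remainder $-Ru_i=-\frac 3n\frac{|\nabla u|^4}{u}u_i+\frac 1n\Delta u|\nabla u|^2u_i+\sqrt{-1}u_0|\nabla u|^2u_i$, computes the divergence via $(\ref{ijk})$ and Lemma $\ref{prep2}$ together with $\Delta u=-u^{\frac{n+2}{n}}$ (the pure scalar terms cancel, as you predict), and only then inserts the weight $u^{\beta-2}$, which produces your $(\beta-2)\frac{|\nabla u|^2}{u^2}\operatorname{Re}W$ term; your coefficient checks match the stated identity.
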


\begin{proof}
  By Lemma \ref{prep2} and \eqref{ijk}, we yield that
  \begin{align*}
    &\operatorname{Re}[D_{jk}u_{\overline j}u_{\overline k}u_i-E_{j\overline k}u_{\overline j}u_ku_i]_{,\overline i}\\
    =&\operatorname{Re}[u_{jk}u_{\overline j}u_{\overline k}u_i-u_{j\overline k}u_{\overline j}u_ku_i]_{,\overline i}+\operatorname{Re}\left[-\frac 3 n\frac{|\nabla u|^4}{u}u_i+\frac 1 n\Delta u|\nabla u|^2u_i+\sqrt{-1}u_0|\nabla u|^2u_i\right]_{,\overline i}\\
    =&2u^2\sum_i|D_i|^2-u^2\sum_i|E_i|^2-u^2\operatorname{Re}D_iE_{\overline i}+\frac 3 n|\nabla u|^2\operatorname{Re}G_iu_{\overline i}\\
    &+\operatorname{Re}\left[\frac{3(n+1)}{n}|\nabla u|^2+u\Delta u+n\sqrt{-1}uu_0\right]D_iu_{\overline i}-\left[\frac{3(n+2)}{n}|\nabla u|^2+\frac{n+1}{n}u\Delta u\right]E_iu_{\overline i},
  \end{align*}
  then \eqref{1'} is proved by inserting $u^{\beta-2}$ into vector field.
\end{proof}

\begin{remark}
  It's noteworthy that, the \eqref{1'} type identities in general CR manifolds are omitted, because some Webster curvature terms occur. Without other assumptions of Webster curvature, those terms are tricky.
\end{remark}

To seek for all $\{(0,0),2,6,+\}$ type identities with invariant tensors as RHS, the vector fields composed of non-invariant things are also needed. Let $\beta$ be an undetermined constant, and consider
\begin{align}
  \begin{split}\label{D2}
    &u^{-\beta}\operatorname{Re}[u^{\beta-3}|\nabla u|^4u_i]_{,\overline i}\\
    =&2\frac{|\nabla u|^2}{u^2}(\operatorname{Re}D_iu_{\overline i}+E_iu_{\overline i})+(\beta+\frac{n+2}{n})\frac{|\nabla u|^6}{u^4}-\frac{n+2}{n}u^{\frac 2n-2}|\nabla u|^4,
  \end{split}
\end{align}
\begin{align}
  \begin{split}\label{D1L1}
    &u^{-\beta}\operatorname{Re}[u^{\beta+\frac 2n-1}|\nabla u|^2u_i]_{,\overline i}\\
    =&u^{\frac 2 n}(\operatorname{Re}D_iu_{\overline i}+E_iu_{\overline i})+(\beta+\frac{n+3}{n})u^{\frac 2n-2}|\nabla u|^4-\frac{n+1}{n}u^{\frac 4 n}|\nabla u|^2,
  \end{split}
\end{align}
\begin{align}
  \begin{split}\label{D1T1}
    &u^{-\beta}\operatorname{Re}[u^{\beta-2}|\nabla u|^2\cdot n\sqrt{-1}u_0u_i]_{,\overline i}\\
    =&-\operatorname{Re}\frac{n\sqrt{-1}u_0}{u}D_iu_{\overline i}-\frac{|\nabla u|^2}{u^2}\operatorname{Re}G_iu_{\overline i}+\frac 1 n\frac{|\nabla u|^6}{u^4}\\
    &+\frac 1 nu^{\frac 2n-2}|\nabla u|^4-n(n+1)\frac{|\nabla u|^2u_0^2}{u^2},
  \end{split}
\end{align}
$$u^{-\beta}\operatorname{Re}[u^{\beta+\frac 4n+1}u_i]_{,\overline i}=(\beta+\frac{n+4}{n})u^{\frac 4 n}|\nabla u|^2-u^{\frac{2n+6}{n}},$$
\begin{align}
  \begin{split}\label{L1T1}
    &u^{-\beta}\operatorname{Re}[u^{\beta+\frac 2 n}\cdot n\sqrt{-1}u_0u_i]_{,\overline i}\\
    =&-u^{\frac 2 n}\operatorname{Re}G_iu_{\overline i}+\frac 1 nu^{\frac 2n-2}|\nabla u|^4+\frac 1 nu^{\frac 4 n}|\nabla u|^2-n^2u^{\frac 2 n}u_0^2,
  \end{split}
\end{align}
\begin{align}
  \begin{split}\label{T2}
    &u^{-\beta}\operatorname{Re}[u^{\beta-1}n^2 u_0^2u_i]_{,\overline i}\\
    =&-2\operatorname{Re}\frac{n\sqrt{-1}u_0}{u}G_iu_{\overline i}+n(n\beta+n+2)\frac{|\nabla u|^2u_0^2}{u^2}-n^2u^{\frac 2 n}u_0^2.
  \end{split}
\end{align}
Eliminate all terms except for invariant tensor terms, we yield the following identity:
\begin{proposition}
  Let $\beta$ be an undetermined constant, then
  \begin{align}
    \begin{split}\label{3'}
      &u^{-\beta}\operatorname{Re}\Big\{u^{\beta-1}\Big[\frac{|\nabla u|^4}{u^2}+u^{\frac 2 n}|\nabla u|^2-(n\beta+n+2)\frac{|\nabla u|^2\cdot n\sqrt{-1}u_0}{u}\\
      &\quad\quad+(n+1)u^{\frac{n+2}{n}}\cdot n\sqrt{-1}u_0-(n+1)n^2 u_0^2\Big]u_i\Big\}_{,\overline i}\\
      =&\operatorname{Re}\left[2\frac{|\nabla u|^2}{u^2}+u^{\frac 2 n}+(n\beta+n+2)\frac{n\sqrt{-1}u_0}{u}\right]D_iu_{\overline i}+\left[2\frac{|\nabla u|^2}{u^2}+u^{\frac 2 n}\right]E_iu_{\overline i}\\
      &+\operatorname{Re}\left[(n\beta+n+2)\frac{|\nabla u|^2}{u^2}-(n+1)u^{\frac 2 n}+2(n+1)\frac{n\sqrt{-1}u_0}{u}\right]G_iu_{\overline i}.
    \end{split}
  \end{align}
\end{proposition}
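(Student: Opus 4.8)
The plan is to derive $(\ref{3'})$ as a single linear combination of the five divergence identities $(\ref{D2})$, $(\ref{D1L1})$, $(\ref{D1T1})$, $(\ref{L1T1})$, $(\ref{T2})$ recorded just above it, the coefficients being chosen so that on the right-hand side every term that is \emph{not} of invariant-tensor type --- that is, not a multiple of $D_iu_{\overline i}$, $E_iu_{\overline i}$ or $G_iu_{\overline i}$ --- drops out. First I would distribute the factor $u^{\beta-1}$ across the bracket in the vector field on the left of $(\ref{3'})$; the resulting field is
\begin{align*}
&1\cdot\big(u^{\beta-3}|\nabla u|^4u_i\big)+1\cdot\big(u^{\beta+\frac 2 n-1}|\nabla u|^2u_i\big)-(n\beta+n+2)\cdot\big(u^{\beta-2}|\nabla u|^2\,n\sqrt{-1}u_0u_i\big)\\
&\qquad+(n+1)\cdot\big(u^{\beta+\frac 2 n}\,n\sqrt{-1}u_0u_i\big)-(n+1)\cdot\big(u^{\beta-1}n^2u_0^2u_i\big),
\end{align*}
which is exactly the combination of the vector fields of $(\ref{D2})$, $(\ref{D1L1})$, $(\ref{D1T1})$, $(\ref{L1T1})$, $(\ref{T2})$ with coefficients $1,\ 1,\ -(n\beta+n+2),\ n+1,\ -(n+1)$. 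The auxiliary identity for $u^{-\beta}\operatorname{Re}[u^{\beta+\frac 4 n+1}u_i]_{,\overline i}$ is not needed for this particular identity.

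Since $u^{-\beta}\operatorname{Re}[\,\cdot\,]_{,\overline i}$ is linear, the left side of $(\ref{3'})$ then equals the corresponding combination of the right sides of those five identities, and it remains only to simplify. I would collect the ``scalar'' monomials $\frac{|\nabla u|^6}{u^4}$, $u^{\frac 2 n-2}|\nabla u|^4$, $u^{\frac 4 n}|\nabla u|^2$, $\frac{|\nabla u|^2u_0^2}{u^2}$ and $u^{\frac 2 n}u_0^2$ and check that each coefficient vanishes: e.g.\ the coefficient of $\frac{|\nabla u|^6}{u^4}$ is $\big(\beta+\frac{n+2}{n}\big)-\frac 1 n(n\beta+n+2)=0$, that of $u^{\frac 4 n}|\nabla u|^2$ is $-\frac{n+1}{n}+\frac 1 n(n+1)=0$, and the remaining three are entirely similar. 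The only surviving terms carry $D_iu_{\overline i}$, $E_iu_{\overline i}$ and $G_iu_{\overline i}$; adding up their coefficients --- here $(\ref{D2})$ and $(\ref{D1L1})$ contribute the same scalars $2\frac{|\nabla u|^2}{u^2}$ and $u^{\frac 2 n}$ to both the $D$- and $E$-slots, while $(\ref{D1T1})$, $(\ref{L1T1})$, $(\ref{T2})$ supply the $G$-slot and the $n\sqrt{-1}u_0$-part of the $D$-slot --- reproduces precisely the three brackets on the right of $(\ref{3'})$.

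So the argument is essentially bookkeeping: once the five coefficients $1,\ 1,\ -(n\beta+n+2),\ n+1,\ -(n+1)$ are fixed (and they are forced, monomial by monomial, by the five cancellation requirements), nothing further is free. An alternative I would be equally content to run is to bypass $(\ref{D2})$--$(\ref{T2})$ and differentiate $u^{-\beta}\operatorname{Re}\{u^{\beta-1}[\cdots]u_i\}_{,\overline i}$ directly, repeatedly substituting the three relations of Lemma $\ref{prep2}$ together with $u_{i\overline i}=\Delta u+n\sqrt{-1}u_0$; since we are on $\mathbb H^n$ there is no curvature or torsion to track. Either way the one genuine hazard is the arithmetic --- keeping the many powers $u^{\text{exponent}}$ and the $\operatorname{Re}/\operatorname{Im}$ splittings straight without a slip --- which is exactly the computation summarized by the phrase ``eliminate all terms except for invariant tensor terms''.
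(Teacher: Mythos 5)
Your proposal is correct and coincides with the paper's own proof, which is precisely the combination $(\ref{D2})+(\ref{D1L1})-(n\beta+n+2)\times(\ref{D1T1})+(n+1)\times(\ref{L1T1})-(n+1)\times(\ref{T2})$; your cancellation checks (e.g.\ for $\frac{|\nabla u|^6}{u^4}$ and $u^{\frac 4 n}|\nabla u|^2$) and the resulting $D$-, $E$-, $G$-slot coefficients all match the stated identity.
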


\begin{proof}
  $\eqref{D2}+\eqref{D1L1}-(n\beta+n+2)\times\eqref{D1T1}+(n+1)\times\eqref{L1T1}-(n+1)\times\eqref{T2}$.
\end{proof}

\begin{remark}
  Because of $u^{\frac{2n+6}{n}}$ term, vector field $u^{-\beta}\operatorname{Re}[u^{\beta+\frac 4n+1}u_i]_{,\overline i}$ is useless. Besides, identity \eqref{3'} with $n=1$ is identical to \eqref{3} with $\alpha=3$ and $\lambda=0$.
\end{remark}

Similar as Riemannian $\{(0,0),2,4,+\}$ case in \cite{MR1134481} and \cite{MR3229793}, all reasonable $\{(0,0),2,6,+\}$ type identities with invariant tensors as RHS are found. Linearly combine \eqref{2'}, \eqref{1'} and \eqref{3'} together. Since we hope that a non-trivial solution exists in the critical exponent case, the cross terms must vanish. Here the cross terms are:
$$\frac{|\nabla u|^2}{u^2}\operatorname{Re}D_iu_{\overline i},~~u^{\frac 2 n}\operatorname{Re}D_iu_{\overline i},~~\operatorname{Re}\frac{n\sqrt{-1}u_0}{u}D_iu_{\overline i},~~\frac{|\nabla u|^2}{u^2}E_iu_{\overline i},$$
$$u^{\frac 2 n}E_iu_{\overline i},~~\frac{|\nabla u|^2}{u^2}\operatorname{Re}G_iu_{\overline i},~~u^{\frac 2 n}\operatorname{Re}G_iu_{\overline i},~~\operatorname{Re}\frac{n\sqrt{-1}u_0}{u}G_iu_{\overline i}.$$
If not, take some $D_i$ term for example, then we'll yield that $D_{ij}+c\displaystyle\frac{u_iu_j}{u}=0$ for some $c\neq0$ by writing into a complete square form. However, $D_{ij}+c\displaystyle\frac{u_iu_j}{u}$ is not an invariant tensor, i.e. $0=\left(D_{ij}+c\displaystyle\frac{u_iu_j}{u}\right)_{,\overline i}$ is composed by some non-invariant things, which will cause that $u$ can only be a constant.

With the idea above, linearly combine RHS of \eqref{2'}, \eqref{1'} and \eqref{3'}:
\begin{align}
  \begin{split}\label{high}
    &\text{RHS of }[\eqref{2'}+a\times\eqref{1'}+b\times\eqref{3'}]\\
    =&\left[d_1\frac{|\nabla u|^2}{u^2}+d_2u^{\frac 2 n}\right]\sum_{i,j}|D_{ij}|^2+(d_1+2a)\sum_i|D_i|^2+\left[e_1\frac{|\nabla u|^2}{u^2}+e_2u^{\frac 2 n}\right]\sum_{i,j}|E_{i\overline j}|^2\\
    &+(e_1-a)\sum_i|E_i|^2+\mu\sum_i|G_i|^2+(d_1+e_1-a)\operatorname{Re}D_iE_{\overline i}-d_4\operatorname{Re}D_iG_{\overline i}\\
    &-e_4\operatorname{Re}E_iG_{\overline i}+\operatorname{Re}\left[\widetilde{\Delta}_1\frac{|\nabla u|^2}{u^2}+\widetilde{\Delta}_2u^{\frac 2 n}+\widetilde{\Delta}_4\frac{n\sqrt{-1}u_0}{u}\right]\operatorname{Re}D_iu_{\overline i}\\
    &+\left[\widetilde{\Theta}_1\frac{|\nabla u|^2}{u^2}+\widetilde{\Theta}_2u^{\frac 2 n}\right]E_iu_{\overline i}+\operatorname{Re}\left[\widetilde{\Xi}_1\frac{|\nabla u|^2}{u^2}+\widetilde{\Xi}_2u^{\frac 2 n}+\widetilde{\Xi}_4\frac{n\sqrt{-1}u_0}{u}\right]\operatorname{Re}G_iu_{\overline i},
  \end{split}
\end{align}
where $a$ and $b$ are undertermined constants. The coefficients are:
$$\widetilde{\Delta}_1=(\beta+\frac{n+3}{n})d_1-\frac{n-1}{n}e_1+\frac 1 nd_4+(\beta+\frac{n+3}{n})a+2b,$$
$$\widetilde{\Delta}_2=-\frac 1 nd_1+(\beta+\frac{n+4}{n})d_2-\frac{n-1}{n}e_2+\frac 1 nd_4-a+b,$$
$$\widetilde{\Delta}_4=\frac 1 nd_1+(\beta+\frac{n+3}{n})d_4+\frac{n-1}{n}e_4+\frac 1 n\mu+a+(n\beta+n+2)b,$$
$$\widetilde{\Theta}_1=-\frac{n+2}{n}d_1+(\beta+\frac{n+2}{n})e_1+\frac 1 ne_4-(\beta+\frac{n+6}{n})a+2b,$$
$$\widetilde{\Theta}_2=-\frac 1 ne_1-\frac{n+2}{n}d_2+(\beta+\frac{n+3}{n})e_2+\frac 1 n e_4+\frac{n+1}{n}a+b,$$
$$\widetilde{\Xi}_1=\frac{n+2}{n}d_1-\frac{n-1}{n}e_1-\frac 1 n\mu+\frac 3 na+(n\beta+n+2)b,$$
$$\widetilde{\Xi}_2=\frac{n+2}{n}d_2-\frac{n-1}{n}e_2-\frac 1 n\mu-(n+1)b,$$
$$\widetilde{\Xi}_4=\frac{n+2}{n}d_4+\frac{n-1}{n}e_4-\beta\mu+2(n+1)b.$$

\vspace*{1em}
\textbf{Case $n\geqslant2$:} Let $\widetilde{\Delta}_l$, $\widetilde{\Theta}_l$ and $\widetilde{\Xi}_l$ be 0. Fix $d_l$, $\mu$, $a$, $b$, and solve $e_l$ from $\widetilde{\Xi}_1=\widetilde{\Xi}_2=\widetilde{\Xi}_4=0$:
$$e_1=\frac{(n+2)d_1-\mu+3a+n(n\beta+n+2)b}{n-1},$$
$$e_2=\frac{(n+2)d_2-\mu-n(n+1)b}{n-1},$$
$$e_4=\frac{-(n+2)d_4+n\beta\mu-2n(n+1)b}{n-1}.$$
Insert $e_l$ into $\widetilde{\Delta}_1-\widetilde{\Delta}_4$, then $\widetilde{\Delta}_1-\widetilde{\Delta}_4=\beta(d_1-d_4+a-2nb-\mu)$.

If $\beta=0$, we have $\widetilde{\Delta}_1=\widetilde{\Delta}_4$. Fix $d_1$, $\mu$, $a$, $b$, and solve $d_2$, $d_4$ from $\widetilde{\Delta}_1=\widetilde{\Delta}_2=0$:
$$d_2=d_1+na-n(n+1)b,~~d_4=-d_1-na+n^2b-\mu.$$
Insert $d_2$, $d_4$, $e_l$ and $\beta$ into $\widetilde{\Theta}_1$ and $\widetilde{\Theta}_2$, then
$$\widetilde{\Theta}_1=\frac{2[2(n+2)d_1+6a+n^2b]}{n(n-1)},~~\widetilde{\Theta}_2=\frac{2(n+2)[2d_1+(3n-1)a-n(3n+4)b]}{n(n-1)}.$$
Fix $b$, and solve $d_1$, $a$ from $\widetilde{\Theta}_1=\widetilde{\Theta}_2=0$: $\displaystyle d_1=-\frac{n(n+3)b}{2(n-1)}$, $\displaystyle a=\frac{n(n+1)b}{n-1}$. To ensure the positivity or negativity of the RHS of \eqref{high}, the coefficients of $\displaystyle\frac{|\nabla u|^2}{u^2}\sum_{i,j}|D_{ij}|^2$ and $\displaystyle u^{\frac 2 n}\sum_{i,j}|D_{ij}|^2$ must have the same sign, i.e. $d_1d_2\geqslant0$. Insert $d_1$ and $a$ into $d_2$: $\displaystyle d_2=\frac n 2b$, hence $b=0$. Similarly, the coefficients of $\displaystyle u^{\frac 2 n}\sum_{i,j}|E_{i\overline j}|^2$ and $\displaystyle \sum_i|G_i|^2$ must have the same sign, i.e. $e_2\mu\geqslant0$. Insert $d_2=b=0$ into $e_2$: $e_2=-\displaystyle\frac{\mu}{n-1}$, hence $\mu=0$. Now, all parameters are 0, and the identity \eqref{high} is trivial.

If $\beta\neq0$, then $d_4=d_1+a-2nb-\mu$. Insert $d_4$ and $e_l$ into $\widetilde{\Delta}_1$, $\widetilde{\Delta}_2$, and $\widetilde{\Delta}_4$:
$$\widetilde{\Delta}_1=\widetilde{\Delta}_4=\frac 1 n[(n\beta+2)d_1+(n\beta+n+1)a-(n\beta+n+2)nb],$$
$$\widetilde{\Delta}_2=\frac 1 n[(n\beta+2)d_2-(n-1)a+n^2b].$$

If $\beta\neq0$ and $\beta\neq-\displaystyle\frac 2 n$, $d_1$ and $d_2$ can be solved from $\widetilde{\Delta}_1=\widetilde{\Delta}_2=\widetilde{\Delta}_4=0$:
$$d_1=\frac{-(n\beta+n+1)a+(n\beta+n+2)nb}{n\beta+2},~~d_2=\frac{(n-1)a-n^2b}{n\beta+2}.$$
Insert $d_l$ and $e_l$ into $\widetilde{\Theta}_1$ and $\widetilde{\Theta}_2$:
$$\widetilde{\Theta}_1=\frac{(n\beta+n+4)[-2(n-1)a+(n\beta+2n+2)nb]}{n(n-1)},$$
$$\widetilde{\Theta}_2=-\frac{(n+2)(n\beta+4)[-2(n-1)a+(n\beta+2n+2)nb]}{n(n-1)(n\beta+2)},$$
then $a=\displaystyle\frac{(n\beta+2n+2)nb}{2(n-1)}$. To ensure the positivity or negativity of the RHS of \eqref{high}, the coefficients of $\displaystyle u^{\frac 2 n}\sum_{i,j}|D_{ij}|^2$, $\displaystyle u^{\frac 2 n}\sum_{i,j}|E_{i\overline j}|^2$ and $\displaystyle\sum_i|G_i|^2$ must have the same sign, i.e. $d_2$, $e_2$ and $\mu$ have the same sign. Insert $a$ into $d_2$ and $e_2$: $d_2=\displaystyle\frac n 2b$, $e_2=-\displaystyle\frac{2\mu+n^2b}{2(n-1)}$, hence $b=\mu=0$. Then, all parameters are 0, which means that the identity \eqref{high} is trivial again. 

\textbf{From discussions above, $\beta=-\displaystyle\frac 2 n$ is the only possible case when $n\geqslant 2$.}

When $\beta=-\displaystyle\frac 2 n$, rewrite $d_4$ and $e_l$:
$$d_4=d_1-\mu+a-2nb,~~e_1=\frac{(n+2)d_1-\mu+3a+n^2b}{n-1},$$
$$e_2=\frac{(n+2)d_2-\mu-n(n+1)b}{n-1},~~e_4=\frac{-(n+2)d_1+n\mu-(n+2)a+2nb}{n-1}.$$
Insert them and $\beta=-\displaystyle\frac 2 n$ into $\widetilde{\Delta}_l$ and $\widetilde{\Theta}_l$:
$$\widetilde{\Delta}_1=-\widetilde{\Delta}_2=\widetilde{\Delta}_4=-\frac{n-1}{2(n+2)}\widetilde{\Theta}_1=\frac{n-1}{n}a-nb,$$
$$\widetilde{\Theta}_2=\frac{(n+2)}{n(n-1)}[-2d_1+2d_2+(n-3)a-n^2b].$$
Fix $d_1$ and $a$, and solve $d_2$ and $b$ from $\widetilde{\Delta}_1=\widetilde{\Theta}_2=0$: $d_2=d_1+a$, $b=\displaystyle\frac{n-1}{n^2}a$. Then
$$\beta=-\frac 2 n,~~b=\frac{n-1}{n^2}a,~~d_2=d_1+a,~~d_4=d_1-\frac{n-2}{n}a-\mu,~~e_1=\frac{(n+2)(d_1+a)-\mu}{n-1},$$
$$e_2=\frac{(n+2)d_1+(2+\displaystyle\frac 1 n)a-\mu}{n-1},~~e_4=\frac{-(n+2)d_1-(n+\displaystyle\frac 2 n)a+n\mu}{n-1}.$$

Rewrite the identity \eqref{high} with the parameters above as the following proposition.
This identity has three undetermined parameters $\{d_1, a, \mu\}$.

\begin{proposition}\label{Jerison-LeeB}
  For $n\geqslant2$. The only positive $\{(0,0),2,6,+\}$ type identity is
  \begin{align}
    \begin{split}\label{total}
      &u^{\frac 2 n}\operatorname{Re}\Big\{u^{-\frac 2 n}\Big\{\Big(d_1\frac{|\nabla u|^2}{u}+(d_1+a)u^{\frac{n+2}{n}}+(d_1-\frac{n-2}{n}a-\mu)n\sqrt{-1}u_0\Big)D_i\\
      &+\Big(\frac{(n+2)(d_1+a)-\mu}{n-1}\frac{|\nabla u|^2}{u}+\frac{(n+2)d_1+(2+\displaystyle\frac 1 n)a-\mu}{n-1} u^{\frac{n+2}{n}}\\
      &\quad\quad+\frac{-(n+2)d_1-(n+\displaystyle\frac 2 n)a+n\mu}{n-1}\cdot n\sqrt{-1}u_0\Big)E_i-\mu n\sqrt{-1}u_0G_i\\
      &+a\Big[D_ju_{\overline j}-E_ju_{\overline j}+\frac{n-1}{n^2}\Big(\frac{|\nabla u|^4}{u^2}+u^{\frac 2 n}|\nabla u|^2-n\frac{|\nabla u|^2\cdot n\sqrt{-1}u_0}{u}\\
      &\quad\quad+(n+1)u^{\frac{n+2}{n}}\cdot n\sqrt{-1}u_0-(n+1)n^2 u_0^2\Big)\Big]\frac{u_i}{u}\Big\}\Big\}_{,\overline i}\\
      =&\left[d_1\frac{|\nabla u|^2}{u^2}+(d_1+a)u^{\frac 2 n}\right]\sum_{i,j}|D_{ij}|^2+(d_1+2a)\sum_i|D_i|^2\\
      &+\left[\frac{(n+2)(d_1+a)-\mu}{n-1}\frac{|\nabla u|^2}{u^2}+\frac{(n+2)d_1+(2+\displaystyle\frac 1 n)a-\mu}{n-1}u^{\frac 2 n}\right]\sum_{i,j}|E_{i\overline j}|^2\\
      &+\frac{(n+2)d_1+3a-\mu}{n-1}\sum_i|E_i|^2+\mu\sum_i|G_i|^2+\frac{(2n+1)d_1+3a-\mu}{n-1}\operatorname{Re}D_iE_{\overline i}\\
      &+(-d_1+\frac{n-2}{n}a+\mu)\operatorname{Re}D_iG_{\overline i}+\frac{(n+2)d_1+(n+\displaystyle\frac 2 n)a-n\mu}{n-1}\operatorname{Re}E_iG_{\overline i}.
    \end{split}
  \end{align}
  The parameters $d_1$, $a$, and $\mu$ satisfy
  \begin{equation}\label{con}
    d_1\geqslant\max\{0,-a\},~~(n+2)d_1-\mu\geqslant\max\{-(n+2)a,-(2+\frac 1 n)a\},
  \end{equation}
  and the following matrix is semi-positive:
  \begin{equation}\label{matrix}
    \begin{pmatrix}
      \mu & \displaystyle\frac 1 2(-d_1+\frac{n-2}{n}a+\mu) & \displaystyle\frac{(n+2)d_1+(n+\frac 2 n)a-n\mu}{2(n-1)}\\[10pt]
      \displaystyle\frac 1 2(-d_1+\frac{n-2}{n}a+\mu) & 2(d_1+a) & \displaystyle\frac{(2n+1)d_1+3a-\mu}{2(n-1)}\\[10pt]
      \displaystyle\frac{(n+2)d_1+(n+\frac 2 n)a-n\mu}{2(n-1)} & \displaystyle\frac{(2n+1)d_1+3a-\mu}{2(n-1)} & \displaystyle\frac{2n-1}{(n-1)^2}[(n+2)d_1+\frac{n^2+5n-3}{2n-1}a-\mu]
    \end{pmatrix}.
  \end{equation}
\end{proposition}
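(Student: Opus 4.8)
Most of the classification is already carried out in the discussion preceding the statement, so the plan is to assemble those pieces and extract the positivity constraints, in three steps: (1) reduce an arbitrary admissible $\{(0,0),2,6,+\}$ type identity to the linear combination $(\ref{high})$ of $(\ref{2'})$, $(\ref{1'})$ and $(\ref{3'})$; (2) impose that the eight cross-term coefficients $\widetilde{\Delta}_l,\widetilde{\Theta}_l,\widetilde{\Xi}_l$ vanish, solve the resulting linear system, and substitute to obtain $(\ref{total})$; (3) decompose the right-hand side of $(\ref{total})$ into manifestly non-negative pieces, from which $(\ref{con})$ and the matrix $(\ref{matrix})$ drop out. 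For step (1) I would observe that, up to the useless vector field $u^{\beta+\frac4n+1}u_i$, the vector-field building blocks of the correct bidegree and order are exactly those used in $(\ref{2'})$, $(\ref{1'})$ together with the list $(\ref{D2})$--$(\ref{T2})$, and that eliminating all terms not expressible through $D_{ij},E_{i\overline j},G_i$ (and their contractions) forces the non-invariant part into the single combination $(\ref{3'})$; hence the general such identity is $(\ref{high})$.

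For step (2): if a cross-term coefficient, say $\widetilde{\Delta}_1$, were non-zero, then completing the square against the $\frac{|\nabla u|^2}{u^2}\sum_{i,j}|D_{ij}|^2$ term and integrating the divergence away would force $D_{ij}+c\,\frac{u_iu_j}{u}\equiv0$ for some $c\neq0$; but $D_{ij}+c\,u_iu_j/u$ is not an invariant tensor, so its covariant derivative produces non-invariant terms that over-determine $u$ and collapse it to a constant. Thus any useful identity must have $\widetilde{\Delta}_l=\widetilde{\Theta}_l=\widetilde{\Xi}_l=0$ for all relevant $l$. That linear system is precisely the one analyzed above: $\beta=0$ and $\beta\notin\{0,-\frac2n\}$ yield only the trivial identity, while $\beta=-\frac2n$ leaves $d_1,a,\mu$ free with $b,d_2,d_4,e_1,e_2,e_4$ determined as displayed; inserting these into $(\ref{high})$ and collecting terms is routine and produces $(\ref{total})$. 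Here $n\geqslant2$ enters through the factor $\frac1{n-1}$ appearing in $e_1,e_2,e_4$.

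For step (3): the terms $\sum_{i,j}|D_{ij}|^2$ and $\sum_{i,j}|E_{i\overline j}|^2$ in $(\ref{total})$ carry weights that are combinations of the two linearly independent positive functions $\frac{|\nabla u|^2}{u^2}$ and $u^{2/n}$. I would split off the entire $\frac{|\nabla u|^2}{u^2}$-weighted portion of each, namely $d_1\big(\tfrac{|\nabla u|^2}{u^2}\sum_{i,j}|D_{ij}|^2-\sum_i|D_i|^2\big)$ and $\tfrac{(n+2)(d_1+a)-\mu}{n-1}\big(\tfrac{|\nabla u|^2}{u^2}\sum_{i,j}|E_{i\overline j}|^2-\tfrac n{n-1}\sum_i|E_i|^2\big)$, both $\geqslant0$ by Lemma \ref{Cauchy} provided $d_1\geqslant0$ and $(n+2)(d_1+a)-\mu\geqslant0$. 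What remains is $(d_1+a)u^{2/n}\sum_{i,j}|D_{ij}|^2$, the analogous $u^{2/n}\sum_{i,j}|E_{i\overline j}|^2$ term, and a Hermitian quadratic form $\mathbf Q$ in $(G_i,D_i,E_i)$ whose diagonal entries are $\mu$, $d_1+(d_1+2a)=2(d_1+a)$ and $\tfrac{(n+2)d_1+3a-\mu}{n-1}+\tfrac{n[(n+2)(d_1+a)-\mu]}{(n-1)^2}=\tfrac{2n-1}{(n-1)^2}\big[(n+2)d_1+\tfrac{n^2+5n-3}{2n-1}a-\mu\big]$, and whose off-diagonal entries are half the coefficients of $\operatorname{Re}D_iG_{\overline i}$, $\operatorname{Re}E_iG_{\overline i}$, $\operatorname{Re}D_iE_{\overline i}$ — that is, precisely the matrix $(\ref{matrix})$. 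Since $\frac{|\nabla u|^2}{u^2}$ and $u^{2/n}$ are independent, the right-hand side is a non-negative combination exactly when the four coefficients $d_1$, $d_1+a$, $(n+2)(d_1+a)-\mu$, $(n+2)d_1+(2+\frac1n)a-\mu$ are all $\geqslant0$ — which is condition $(\ref{con})$ — and $\mathbf Q\succeq0$, i.e. $(\ref{matrix})$ is semi-positive; the reverse implication is immediate from the decomposition.

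The main obstacle is the bookkeeping in step (3): one must split off \emph{exactly} the full $\frac{|\nabla u|^2}{u^2}$-weighted parts via Lemma \ref{Cauchy} — a different choice would change $\mathbf Q$, and only this one reproduces the entries of $(\ref{matrix})$ after the hidden $n-1$ denominators in $e_1,e_2,e_4$ are cleared — and then check that the leftover terms $|D_i|^2,|E_i|^2,|G_i|^2$ and the three cross terms $\operatorname{Re}D_iG_{\overline i}$, $\operatorname{Re}E_iG_{\overline i}$, $\operatorname{Re}D_iE_{\overline i}$ really assemble into $(\ref{matrix})$. A secondary delicate point is the rigorous form of the assertion ``a non-zero cross term forces $u$ to be constant,'' which underpins the word ``only'': it rests on $D_{ij}+c\,u_iu_j/u$, $E_{i\overline j}+c\,L_{i\overline j}$ and their analogues failing to be invariant tensors, so that their vanishing cannot hold for a non-constant solution of $(\ref{Hequ})$.
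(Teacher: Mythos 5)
Your proposal is correct and follows essentially the same route as the paper: the reduction to the combination $(\ref{2'})+a\,(\ref{1'})+b\,(\ref{3'})$, the vanishing of the cross terms via the non-invariance argument leading to $\beta=-\tfrac2n$ and the three free parameters $d_1,a,\mu$, and then positivity read off by requiring the four $|D_{ij}|^2$, $|E_{i\overline j}|^2$ coefficients to be non-negative (which is exactly $(\ref{con})$) and absorbing the $\tfrac{|\nabla u|^2}{u^2}$-weighted squares through Lemma \ref{Cauchy} into a quadratic form in $(G_i,D_i,E_i)$ whose matrix is $(\ref{matrix})$ — your verification of the $(3,3)$ entry $\tfrac{2n-1}{(n-1)^2}\bigl[(n+2)d_1+\tfrac{n^2+5n-3}{2n-1}a-\mu\bigr]$ matches the paper's. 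The only difference is that you spell out the bookkeeping the paper leaves implicit, so no gap to report.
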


\begin{proof}
  The coefficients of $\displaystyle\frac{|\nabla u|^2}{u^2}\sum_{i,j}|D_{ij}|^2$, $\displaystyle u^{\frac 2 n}\sum_{i,j}|D_{ij}|^2$, $\displaystyle\frac{|\nabla u|^2}{u^2}\sum_{i,j}|E_{i\overline j}|^2$ and $\displaystyle u^{\frac 2 n}\sum_{i,j}|E_{i\overline j}|^2$ are non-negative, i.e. \eqref{con}. By Lemma \ref{Cauchy}, the RHS of identity is greater than or equal to a quadratic form with \eqref{matrix} as matrix.
\end{proof}

Now we prove Theorem \ref{JL2}.
\begin{proof}[Proof of Theorem \ref{JL2}]
From Prop.\ref{Jerison-LeeB}. we know that three constants $d_1$, $a$, and $\mu$ determine a three-dimensional family of differential identities as Jerison-Lee stated.
\end{proof}

Three constants $d_1$, $a$, and $\mu$ determine a three-dimensional family of differential identities as Jerison-Lee stated. If $d_1=1$, $a=0$ and $\mu=3$, we yield the classical Jerison-Lee identity (4.2) in \cite{MR0924699}:
\begin{align*}
  &u^{\frac 2 n}\operatorname{Re}\left\{u^{-\frac 2 n}\left[\left(\frac{|\nabla u|^2}{u}+u^{\frac{n+2}{n}}\right)(D_i+E_i)-n\sqrt{-1}u_0(2D_i-2E_i+3G_i)\right]\right\}_{,\overline i}\\
  =&\left[\frac{|\nabla u|^2}{u^2}+u^{\frac 2 n}\right]\sum_{i,j}(|D_{ij}|^2+|E_{i\overline j}|^2)+\sum_{i}(|D_i|^2+|E_i|^2+3|G_i|^2)\\
  &+2\operatorname{Re}D_iE_{\overline i}+2\operatorname{Re}D_iG_{\overline i}-2\operatorname{Re}E_iG_{\overline i}\\
  =&u^{\frac 2 n}\sum_{i,j}(|D_{ij}|^2+|E_{i\overline j}|^2)+\sum_i(|G_i|^2+|G_i+D_i|^2+|G_i-E_i|^2)+u^{-2}\sum_{i,j,k}|D_{ij}u_{\overline k}+E_{i\overline k}u_j|^2.
\end{align*}
If $d_1=0$, $a=n$ and $\mu=n+2$, we yield the identity (4.3) in \cite{MR0924699}, which is also positive:
\begin{align*}
  &u^{\frac 2 n}\operatorname{Re}\Big\{u^{-\frac 2 n}\Big\{\Big(nu^{\frac{n+2}{n}}-2n^2\sqrt{-1}u_0\Big)D_i+\Big((n+2)\frac{|\nabla u|^2}{u}+u^{\frac{n+2}{n}}+2n\sqrt{-1}u_0\Big)E_i\\
  &-(n+2)n\sqrt{-1}u_0G_i+n\Big[D_ju_{\overline j}-E_ju_{\overline j}+\frac{n-1}{n^2}\Big(\frac{|\nabla u|^4}{u^2}+u^{\frac 2 n}|\nabla u|^2\\
  &\quad\quad-n\frac{|\nabla u|^2\cdot n\sqrt{-1}u_0}{u}+(n+1)u^{\frac{n+2}{n}}\cdot n\sqrt{-1}u_0-(n+1)n^2 u_0^2\Big)\Big]\frac{u_i}{u}\Big\}\Big\}_{,\overline i}\\
  =&nu^{\frac 2 n}\sum_{i,j}|D_{ij}|^2+2n\sum_i|D_i|^2+\left[(n+2)\frac{|\nabla u|^2}{u^2}+u^{\frac 2 n}\right]\sum_{i,j}|E_{i\overline j}|^2\\
  &+2\sum_i|E_i|^2+(n+2)\sum_i|G_i|^2+2\operatorname{Re}D_iE_{\overline i}+2n\operatorname{Re}D_iG_{\overline i}-2\operatorname{Re}E_iG_{\overline i}\\
  =&(n+2)\frac{|\nabla u|^2}{u^2}\sum_{i,j}|E_{i\overline j}|^2+\sum_i|E_i|^2+(n-2)\sum_i|D_i|^2+(n+1)\sum_i|G_i+D_i|^2\\
  &+\sum_i|G_i-D_i-E_i|^2+u^{\frac 2 n}\sum_{i,j}(|E_{i\overline j}|^2+n|D_{ij}|^2).
\end{align*}
if $d_1=1$, $a=0$ and $\mu=3n$, we yield the identity (4.4) in \cite{MR0924699}, which is not positive:
\begin{align*}
  &u^{\frac 2 n}\operatorname{Re}\Big\{u^{-\frac 2 n}\Big\{\Big(\frac{|\nabla u|^2}{u}+u^{\frac{n+2}{n}}\Big)(D_i-2E_i)\\
  &\quad\quad\quad\quad\quad-n\sqrt{-1}u_0[(3n-1)D_i-(3n+2)E_i+3n G_i]\Big\}\Big\}_{,\overline i}\\
  =&\left[\frac{|\nabla u|^2}{u^2}+u^{\frac 2 n}\right]\sum_{i,j}(|D_{ij}|^2-2|E_{i\overline j}|^2)+\sum_i(|D_i|^2-2|E_i|^2+3n|G_i|^2)\\
  &-\operatorname{Re}D_iE_{\overline i}+(3n-1)\operatorname{Re}D_iG_{\overline i}-(3n+2)\operatorname{Re}E_iG_{\overline i}.
\end{align*}

Notice that the matrix \eqref{matrix} can't be semi-positive if $\mu=0$. W.L.O.G., assume that $\mu=3$, then the positivity condition \eqref{con} and matrix \eqref{matrix}$\geqslant0$ determine the range for $d_1$ and $a$, which can be described by the following figure:

\begin{figure}[h]
  \centering
  \includegraphics[scale=0.4]{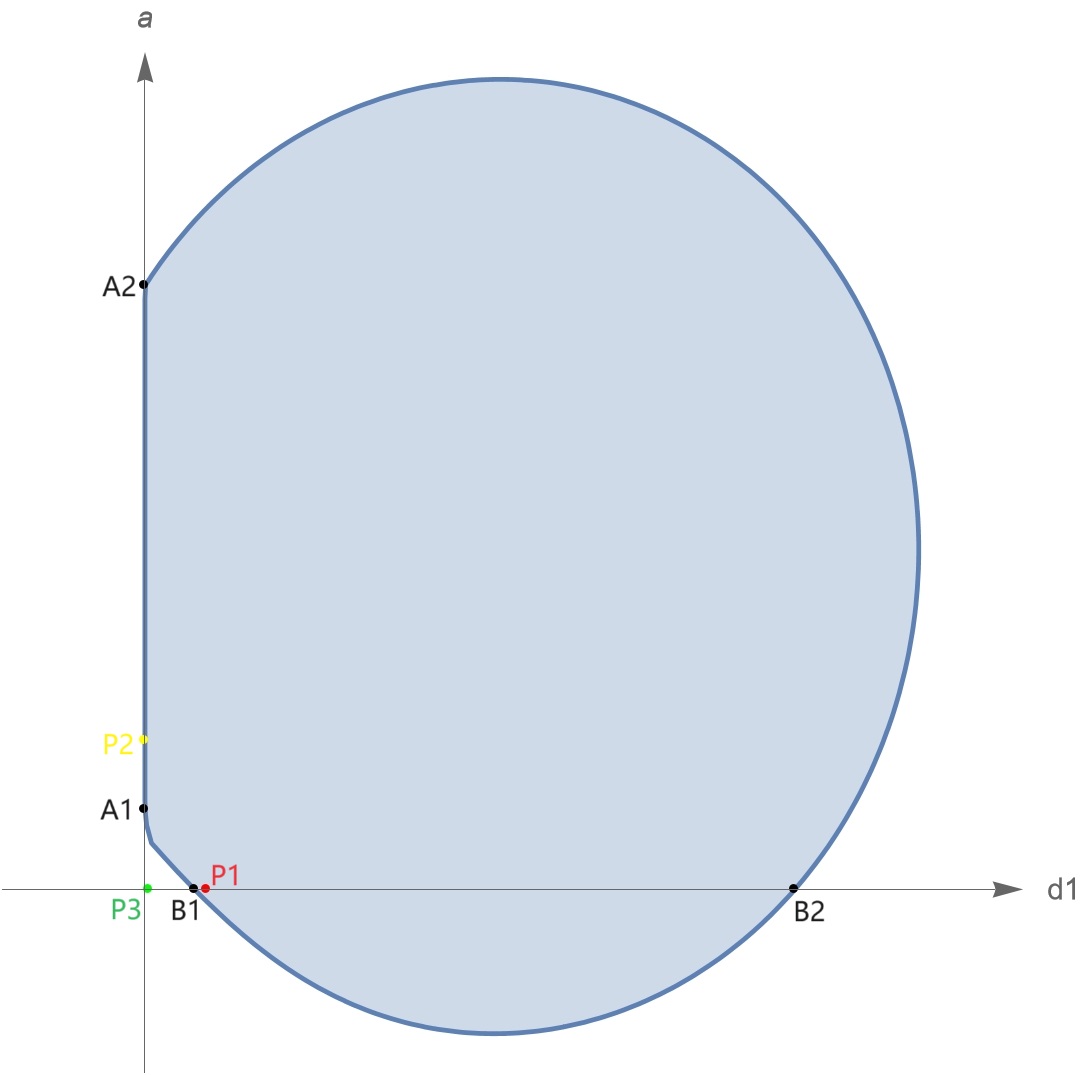}
  \caption{The range for $d_1$ and $a$ when identity \eqref{total} is positive.}
\end{figure}

The coordinates of key points are:
$${\color{red}\text{P1}}=(1,0),\quad{\color{yellow}\text{P2}}=(0,\frac{3n}{n+2}),\quad{\color{green}\text{P3}}=(\frac1 n,0),\quad\text{A1}=(0,\frac{3n}{2n+1}),$$
\begin{align*}
  \text{A2}=&\Big(0,\frac{2\sqrt{n(73n^7+538n^6+1435n^5+134n^4-1439n^3-120n^2+292n+48)}}{3n^4-2n^3-5n^2+26n+8}\\
  &\hspace*{1em}\times\cos\Big\{\frac 1 3\arccos\Big[\sqrt n(595n^{10}+7017n^9+30666n^8+55019n^7-7692n^6-
  82095n^5\\
  &\hspace*{10em}-12345n^4+38598n^3+2556n^2-6920n-1440)\\
  &\hspace*{3em}\times(73n^7+538n^6+1435n^5+134n^4-1439n^3-120n^2+292n+48)^{-3/2}\Big]\Big\}\\
  &\hspace*{1em}+\frac{n(10n^3+35n^2+4)}{(n+2)(3n^3-8n^2+11n+4)}\Big),
\end{align*}
$$\text{B1}=\left(\frac{\sqrt{468n^4+1380n^3+n^2-1500n+612}}{3n^2+8n+4}\cos\frac{\theta-2\pi}{3}+\frac{24n^2+43n-18}{2(n+2)(3n+2)},0\right),$$
$$\text{B2}=\left(\frac{\sqrt{468n^4+1380n^3+n^2-1500n+612}}{3n^2+8n+4}\cos\frac{\theta}{3}+\frac{24n^2+43n-18}{2(n+2)(3n+2)},0\right),$$
$$\theta=\arccos\frac{9936n^6+44172n^5+32202n^4-66149n^3-35622n^2+54756n-15336
}{(468n^4+1380n^3+n^2-1500n+612)^{\frac 3 2}},$$
where {\color{red}P1}, {\color{yellow}P2}, {\color{green}P3} correspond with identity (4.2), (4.3), (4.4) in \cite{MR0924699}, and A1, A2, B1, B2 are intersections of coordinate axes and boundary of the range. From the figure, it's obvious that (4.2) and (4.3) are positive, and (4.4) is not positive.

\vspace*{1em}
\textbf{Case $n=1$:} It's easy to check that the identity \eqref{total} degenerates to classical Jerison-Lee identity (4.2) in \cite{MR0924699} when $n=1$, hence identity (4.2), (4.3) and (4.4) in \cite{MR0924699} are identical.
In fact, it's the only possible $\{(0,0),2,6,+\}$ type identity when $n=1$.

\begin{proposition}
  For $n=1$, the only positive $\{(0,0),2,6,+\}$ type identity is the classical Jerison-Lee identity (4.2) in \cite{MR0924699}.
\end{proposition}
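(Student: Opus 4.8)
The plan is to run, for $n=1$, the same scheme that produced Proposition~\ref{Jerison-LeeB}, taking account of the two features special to $n=1$: first, $E_{1\overline1}\equiv 0$, so the tensors $E_{i\overline j}$ and $E_i$ drop out entirely and the parameters $e_1,e_2,e_4$ are absent; second, $|\nabla u|^2|D_{11}|^2=u^2|D_1|^2$, so $\sum_i|D_i|^2$ is no longer an independent square but a density multiple of $\sum_{i,j}|D_{ij}|^2$. Concretely, I would write down the most general $\{(0,0),2,6,+\}$-type identity on $\mathbb H^1$ at $\alpha=\tfrac{n+2}{n}=3$, $\lambda=0$: a single $u^{-\beta}\operatorname{Re}\{u^\beta[\cdots]\}_{,\overline i}$ obtained by linearly combining the primitive divergences $(\ref{D2})$ through $(\ref{T2})$ --- discarding the useless $u^{\beta+\frac4n+1}u_i$ one, which produces a $u^{(2n+6)/n}$ term --- together with the divergences of $u^{\beta-1}|\nabla u|^2D_i$, $u^{\beta+\alpha}D_i$, $u^\beta n\sqrt{-1}u_0D_i$ and $u^\beta n\sqrt{-1}u_0G_i$ appearing in the proof of $(\ref{2'})$, all specialized to $n=1$, $\lambda=0$, $\alpha=3$. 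Since the admissible vector fields form a finite list (each is $u^\beta$ times a scalar density of the correct weight multiplying $u_i$, $D_i$, or $G_i$), this combination is exhaustive; note in particular that at $n=1$ the divergence in $(\ref{1'})$ collapses to a multiple of the divergence of $u^{\beta-1}|\nabla u|^2D_i$ --- its $E$-part vanishes and its $D_ju_{\overline j}$-part becomes $|\nabla u|^2D_1$ --- so it contributes no new freedom.

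Next I would impose the \emph{usefulness} conditions exactly as in the paragraph preceding $(\ref{high})$. Since non-trivial solutions exist in the critical case (\cite{MR0924699}), the right-hand side of such an identity must be a non-negative combination of complete squares of invariant tensors and must vanish identically on those solutions, where $D_{11}=G_1=0$. Hence (i) every cross term $[\text{density}]\operatorname{Re}D_1u^1$ and $[\text{density}]\operatorname{Re}G_1u^1$ must have vanishing coefficient, since otherwise completing the square produces $D_{11}+c\,u_1u_1/u$ with $c\neq0$ (or the analogue with $G_1$), which is not an invariant tensor, so that its vanishing would force $u$ to be constant, contradicting the existence of the Jerison--Lee solutions; and (ii) the pure-derivative leftover monomials $|\nabla u|^6/u^4$, $u^{2/n-2}|\nabla u|^4$, $u^{4/n}|\nabla u|^2$, $|\nabla u|^2u_0^2/u^2$, $u^{2/n}u_0^2$ must have vanishing coefficients, being non-squares of invariant tensors that do not vanish on the solutions. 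After using $\Delta u=-u^3$ to rewrite $\Delta u$ throughout, these requirements become a homogeneous linear system in the combination coefficients and in $\beta$.

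I would then solve this linear system. As in the $n\geqslant2$ analysis $\beta$ is forced to equal $-\tfrac2n=-2$, and the system pins down the $D_i$-coefficients to $d_2=d_1$ and $d_4=d_1-\mu$ (after absorbing the redundant $(\ref{1'})$-coefficient into $d_1$) and the auxiliary coefficients in terms of $d_1,\mu$. The decisive point is that the residual positivity requirements --- the coefficients of $\tfrac{|\nabla u|^2}{u^2}\sum_{i,j}|D_{ij}|^2$ and $u^{2/n}\sum_{i,j}|D_{ij}|^2$ must share sign, and the leftover $3\times3$ quadratic form in $(G_1,D_1)$ must be positive semidefinite, this form being genuinely of lower rank at $n=1$ because $|D_1|^2=\tfrac{|\nabla u|^2}{u^2}|D_{11}|^2$ --- force in addition $\mu=3d_1$, leaving only the one-parameter scaling family through $(d_1,\mu)=(1,3)$. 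This matches the observation recorded after Proposition~\ref{Jerison-LeeB}: the entry $\tfrac{2n-1}{(n-1)^2}[\dots]$ of the matrix $(\ref{matrix})$ is finite at $n=1$ only when the bracket vanishes, i.e.\ when $\mu=3(d_1+a)$, whereupon $(\ref{total})$ degenerates to Jerison--Lee's identity. Normalizing $d_1=1$ and comparing with $(\ref{total})$ at $n=1$ identifies the surviving identity as the classical Jerison--Lee identity (4.2) of \cite{MR0924699}, which is the claim.

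The main obstacle is bookkeeping rather than conceptual: one must make sure the list of primitive $\{(0,0),2,6,+\}$ vector fields for $n=1$ is complete, correctly track the many monomial coefficients as linear functions of the parameters (being careful that substituting $\Delta u=-u^3$ merges several monomials), and check that no overlooked primitive could re-open the family. The only genuinely new ingredient over the $n\geqslant2$ analysis is the careful handling of the two $n=1$ degeneracies $E_{1\overline1}\equiv0$ and $|D_1|^2\propto\tfrac{|\nabla u|^2}{u^2}|D_{11}|^2$, together with the attendant $\tfrac1{n-1}$ singularities in the coefficients of $(\ref{total})$ and $(\ref{matrix})$ --- and it is precisely these that collapse the three-dimensional family down to the single identity (4.2).
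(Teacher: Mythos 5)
Your plan is essentially the paper's own argument: specialize the $\{(0,0),2,6,+\}$ machinery to $n=1$, note that $E_{1\overline1}\equiv0$ kills the $E$-terms and that $(\ref{1'})$ collapses into the $d_1$-term of $(\ref{2'})$ (so $a$ may be set to $0$), impose vanishing of all cross terms $[\,\cdot\,]\operatorname{Re}D_1u^1$ and $[\,\cdot\,]\operatorname{Re}G_1u^1$ on the grounds that a non-invariant complete square would contradict the existence of the Jerison--Lee solutions, and solve the resulting linear system; this is exactly how the paper proceeds, and your conclusion is correct.

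One inaccuracy in your account of the endgame: the cross-term linear system alone already pins down the one-parameter ray, so positivity is not what forces $\mu=3d_1$. Concretely, with $a=0$ the conditions $\widetilde\Xi_1=\widetilde\Xi_2=\widetilde\Xi_4=0$ express $d_1,d_2,d_4$ through $\mu,b,\beta$, and then $\widetilde\Delta_1-\widetilde\Delta_2=\tfrac13(\beta+2)(\beta+4)b=0$ forces (in the non-trivial case) $\beta=-2$ and $b=0$, whereupon $\widetilde\Xi_1=0$ reads $3d_1-\mu=0$; thus $d_1=d_2=\mu/3$, $d_4=-2\mu/3$ are forced before any positivity consideration, and the residual two-parameter family $d_2=d_1$, $d_4=d_1-\mu$ that you propose to cut down by semidefiniteness does not actually survive the cross-term conditions. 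Likewise, your reading of the $(n-1)^{-2}$ entry of $(\ref{matrix})$ as ``finite only if the bracket vanishes'' is a useful heuristic but not a substitute for the $n=1$ computation, since $(\ref{matrix})$ was derived under $n\geqslant2$. None of this affects the validity of your overall scheme: executing the linear algebra correctly yields the classical Jerison--Lee identity (4.2) directly, as in the paper.
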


\begin{proof}
    Notice that \eqref{1'} degenerates to $u^{-\beta}[u^{\beta-1}|\nabla u|^2D_1]_{,\overline 1}$, which is the $d_1$ term in the vector field of \eqref{2'}. Hence we assume that $a=0$. Rewrite \eqref{high} as
    \begin{align}
      \begin{split}\label{low}
        &\left[\eqref{2'}+b\times\eqref{3'}\right]\Big|_{n=1}\\
        =&\left[d_1\frac{|\nabla u|^2}{u^2}+d_2u^{\frac 2 n}\right]\sum_{i,j}|D_{ij}|^2+d_1\sum_i|D_i|^2+\mu\sum_i|G_i|^2-d_4\operatorname{Re}D_iG_{\overline i}\\
        &+\operatorname{Re}\left[\widetilde{\Delta}_1\frac{|\nabla u|^2}{u^2}+\widetilde{\Delta}_2u^{\frac 2 n}+\widetilde{\Delta}_4\frac{n\sqrt{-1}u_0}{u}\right]\operatorname{Re}D_iu_{\overline i}\\
        &+\operatorname{Re}\left[\widetilde{\Xi}_1\frac{|\nabla u|^2}{u^2}+\widetilde{\Xi}_2u^{\frac 2 n}+\widetilde{\Xi}_4\frac{n\sqrt{-1}u_0}{u}\right]\operatorname{Re}G_iu_{\overline i},
      \end{split}
    \end{align}
    The coefficients are:
    $$\widetilde{\Delta}_1=(\beta+4)d_1+d_4+2b,~~\widetilde{\Delta}_2=-d_1+(\beta+5)d_2+d_4+b,$$
    $$\widetilde{\Delta}_4=d_1+(\beta+4)d_4+\mu+(\beta+3)b,~~\widetilde{\Xi}_1=3d_1-\mu+(\beta+3)b,$$
    $$\widetilde{\Xi}_2=3d_2-\mu-2b,~~\widetilde{\Xi}_4=3d_4-\beta\mu+4b.$$
    
    Fix $\beta$, $\mu$, $b$, and solve $d_1$, $d_2$, $d_4$ from $\widetilde{\Xi}_1=\widetilde{\Xi}_2=\widetilde{\Xi}_4=0$:
    $$d_1=\frac{\mu+(\beta+3)b}{3},~~d_2=\frac{\mu+2b}{3},~~d_4=\frac{\beta\mu+4b}{3}.$$
    Insert them into $\widetilde{\Delta}_l$:
    $$\widetilde{\Delta}_1=\frac{2(\beta+2)\mu+(\beta^2+7\beta+22)b}{3},$$
    $$\widetilde{\Delta}_2=\frac{2(\beta+2)\mu+(\beta+14)b}{3},$$
    $$\widetilde{\Delta}_4=\frac{(\beta+2)^2\mu+4(2\beta+7)}{3}.$$
    Consider $\widetilde{\Delta}_1-\widetilde{\Delta}_2=0$, i.e. $(\beta+2)(\beta+4)b=0$.
    
    If $\beta=-4$, then $\displaystyle\widetilde{\Delta}_1=\frac{-4\mu+10b}{3}=0$, $\displaystyle\widetilde{\Delta}_4=\frac{4\mu-4b}{3}=0$, hence $\mu=b=0$, then $d_1=d_2=d_4=0$, which means that the identity is trivial.
    
    If $\beta\neq-2$ and $\beta\neq-4$, $b=0$, $\mu=0$, then identity is trivial as well.

    \textbf{From discussions above, $\beta=-2$ is the only possible case when $n=1$.}
    
    If $\beta=-2$, then $\widetilde{\Delta}_2=0$ yields that $b=0$. All parameters are:
    $$d_1=d_2=\frac\mu 3,~~d_4=-\frac 2 3\mu,~~\beta=-2,~~b=0,$$
    which is just identical to the classical Jerison-Lee identity case.
\end{proof}

\vspace*{1em}
All possible identities can be found by dimensional conservation and invariant tensors, and then a linear combination of them with some appropriate parameters will always work. For the 'near-critical' subcritical exponent case in CR geometry, the best choice is to use the same structure of identity as the critical exponent case. For example, the vector fields in Case 2 in Section 3 are composed of invariant tensors. However, we need a little change with \eqref{3} for Case 3 in Section 3, but we don't break the continuity when $\alpha\to3$. For the case $\alpha$ far away from the critical exponent, without caring about the critical exponent case, our choices of parameters are flexible, such as Case 1 and Case 4 in Section 3.
\bibliographystyle{plain}
\bibliography{reference.bib}
\end{document}